\newcommand{\g}{\frak{g}}
\newcommand{\h}{\frak{h}}
\newcommand{\gl}{\frak{g}\frak{l}}
\newcommand{\e}{\varepsilon}
\renewcommand{\v}{\vee}
\newcommand{\s}{\mathbf{s}}
\newcommand{\ti}{\tilde}
\newcommand{\F}{\mathcal{F}}
\newcommand{\D}{\mathcal{D}}
\newcommand{\E}{\mathcal{E}}
\newcommand{\I}{\mathcal{I}}
\newcommand{\Q}{\mathcal{Q}}
\renewcommand{\P}{\mathcal{P}}
\renewcommand{\S}{\mathcal{S}}
\newcommand{\T}{\mathcal{T}}
\renewcommand{\O}{\mathcal{O}}
\newcommand{\dgLie}{\mathrm{dgLie}}
\newcommand{\Hom}{\mathrm{Hom}}
\newcommand{\Coder}{\mathrm{Coder}}
\newcommand{\End}{\mathrm{End}}
\newcommand{\unshuff}{\mathrm{unshuff}}
\newcommand{\Com}{\mathcal{C}om}
\newcommand{\Lie}{\mathcal{L}ie}
\newcommand{\LL}{\mathcal{L}\mathcal{L}}
\newcommand{\Leib}{\mathcal{L}eib}
\newcommand{\Perm}{\mathcal{P}erm}
\newcommand{\Poiss}{\mathcal{P}oiss}
\newcommand{\Zinb}{\mathcal{Z}inb}
\newcommand{\p}{\prime}
\newcommand{\pa}{\partial}
\renewcommand{\c}{\circ}
\newcommand{\ot}{\otimes}
\newcommand{\ol}{\overline}
\newtheorem{definition}{Definition}[section]
\newtheorem{lemma}[definition]{Lemma}
\newtheorem{claim}[definition]{Claim}
\newtheorem{proposition}[definition]{Proposition}
\newtheorem{theorem}[definition]{Theorem}
\newtheorem{corollary}[definition]{Corollary}
\newtheorem{remark}[definition]{Remark}
\newtheorem{example}[definition]{Example}
\date{}
\begin{document}

\title
{
Distributive laws in derived bracket construction and
homotopy theory of derived bracket Leibniz algebras
}
\author{K. UCHINO}
\maketitle
\section{Introduction}

Leibniz algebras introduced by Jean-Louis Loday
are by definition vector spaces equipped with binary bracket products $[.,.]$
satisfying the Leibniz identity.
$$
[x_{1},[x_{2},x_{3}]]=[[x_{1},x_{2}],x_{3}]+[x_{2},[x_{1},x_{3}]].
$$
Leibniz algebras arise in many areas of mathematics,
in particular, in \textbf{derived bracket construction}
(cf. Y. Kosmann-Schwarzbach \cite{Kos1}.)
Let $(\g,(.,.),d)$ be a differential graded (dg) Lie algebra
with a Lie bracket $(.,.)$ of degree $0$ and a differential $d$ of degree $+1$.
One can define on $\g$ a new bracket product by
$$
[x_{1},x_{2}]:=(dx_{1},x_{2}),
$$
which is called a \textbf{binary} derived bracket or derived bracket for short.
The derived bracket is an odd Leibniz bracket, namely,
it satisfies an odd version of the Leibniz identity (see eq. (\ref{oddloday}) below.)
Derived brackets are used in (Poisson-)geometry and in theoretical physics
to formulate various bracket formalisms (see \cite{Kos2} for the details.)\\
\indent
The main aim of this note is to study an operadic and an algebraic homology
theory of derived bracket Leibniz algebras.
The Lie algebra with the derived bracket $(\g,(.,.),[.,.])$
satisfies the following two conditions.
\begin{eqnarray}
\label{IA}[x_{1},(x_{2},x_{3})]&=&([x_{1},x_{2}],x_{3})+(x_{2},[x_{1},x_{3}]),\\
\label{IB}[(x_{1},x_{2}),x_{3}]&=&([x_{1},x_{2}],x_{3})-([x_{2},x_{1}],x_{3}),
\end{eqnarray}
where $|x_{1}|=|x_{2}|=|x_{3}|=0$.
It will be proved that $\{(\ref{IA}),(\ref{IB})\}$ is
the \textbf{minimal condition} that the derived bracket satisfies (except the Leibniz identity.)
We here consider a new type of algebra;
\textbf{Lie-Leibniz algebras} are by definition {\em even} Lie algebras
equipped with {\em odd} Leibniz brackets satisfying (\ref{IA}) and (\ref{IB}).
We should remark that Leibniz brackets of Lie-Leibniz algebras are not necessarily derived brackets.
The notion of Lie-Leibniz algebra is considered to be
an abstraction of the derived bracket construction.\\
\indent
Our first task is to study the operad of Lie-Leibniz algebras, which is denoted by $\LL$.
It is not easy to study Lie-Leibniz algebras without help of algebraic operad theory
(cf. Loday-Vallette \cite{LV}.)
The operad $\LL$ is a binary quadratic operad generated by $\Lie(2)\oplus\s\Leib(2)$.
Here $\Lie$ is the operad of Lie algebras,
$\Leib$ is the one of Leibniz algebras
and $\s\Leib$ is the odd version of $\Leib$.
The quadratic relation of $\LL$ is the Jacobi identity,
the odd Leibniz identity and $\{(\ref{IA}), (\ref{IB})\}$ (see (\ref{nla})-(\ref{nld}) below.)
We will prove that $\{(\ref{IA}), (\ref{IB})\}$ is a \textbf{distributive law}
in the sense of M. Markl \cite{MM} and J. Beck \cite{Beck}.
This implies that the operad $\LL$ is Koszul.\\
\indent
The second task is to study the bar construction of Lie-Leibniz algebras.
Since $\LL$ is Koszul, the strong homotopy (sh) Lie-Leibniz operad $\LL_{\infty}$
is well-defined as the quasi-free resolution over $\LL$.
Therefore, the notion of $\LL_{\infty}$-algebra or sh Lie-Leibniz algebra
is defined as a representation of $\LL_{\infty}$.
We will study two interesting subclasses of sh Lie-Leibniz algebras.
One is \textbf{sh Leibniz algebras equipped with invariant 2-forms} (or Lie brackets)
and the other is a \textbf{derived homotopy construction of sh Lie algebra}.
The former naturally arises in higher geometry and
the latter provides a new type of derived bracket construction.
\medskip\\
\indent
This paper is organized as follows.\\
Section 2 is Preliminaries. We will recall odd Leibniz algebras,
classical derived bracket construction, algebraic operads, operadic version of
derived bracket construction and distributive law.\\
In Section 3.1, we will define the notion of Lie-Leibniz algebra and
observe some examples, i.e.,
invariant Lie algebras, Courant algebroids and omni-Lie algebras.
In 3.2, we will study the operad $\LL$ and prove that $\{(\ref{IA}),(\ref{IB})\}$
is a distributive law.
The distributive law implies that $\LL$ is decomposed into
$$
\LL\cong\Lie\odot\s\Leib,
$$
where $\odot$ is a certain monoidal structure (defined in Section 2.)
It is known that if two Koszul operads are unified via a distributive law,
then the induced operad is also Koszul.
In the case of $\LL$, $\Lie$ and $\s\Leib$ are both Koszul.
Hence $\LL$ is also so.\\
In Section 4, we will compute the Koszul dual operad of $\LL$.
The result of this section will be used in Sections 5 to construct
the bar complex of a Lie-Leibniz algebra.\\
In Section 5,
we will practice the bar construction of $\LL$-algebra
along the canonical method developed in Ginzburg-Kapranov \cite{GK}.\\
In Section 6, as an application of Section 5,
we will introduce the notion of strong homotopy Lie-Leibniz algebra
and prove some new results related with sh Lie and sh Leibniz algebras.
\medskip\\
\noindent\textbf{Acknowledgement}.
The author is grateful to Professor Jean-Louis Loday for his helpful comments and kind advice.

\section{Preliminaries}

Through the paper, all algebraic objects are assumed to be defined
over a fixed field $\mathbb{K}$ of characteristic zero
and graded linear algebra depends on Koszul sign convention.
Namely, the transposition of tensor product satisfies
$o_{1}\ot o_{2}\cong(-1)^{|o_{1}||o_{2}|}o_{2}\ot o_{1}$,
for any object $o_{i}$, where $|o_{i}|$ is the degree of $o_{i}$, $i\in\{1,2\}$.
We denote by $s$ the degree shifting operator of degree $+1$,
for instance, $|so|:=|o|+1$.
The degree of the inverse of $s$, $s^{-1}$, is $-1$.

\subsection{Classical derived bracket construction}
Leibniz algebras (or called Loday algebras) are by definition vector
spaces $\g$ equipped with binary bracket products $[.,.]$
satisfying the Leibniz identity,
$$
[x_{1},[x_{2},x_{3}]]=[[x_{1},x_{2}],x_{3}]+[x_{2},[x_{1},x_{3}]],
$$
where $x_{1},x_{2},x_{3}\in\g$.
If the bracket is anti-commutative, then it is a Lie bracket.
\begin{definition}[odd Leibniz algebras]
Let $(\g,[.,.])$ be a graded space with a binary
bracket of degree $+1$.
The pair $(\g,[.,.])$ is called a Leibniz algebra of degree $+1$,
if the identity below is satisfied,
\begin{eqnarray}\label{oddloday}
(-1)^{|x_{1}|}[x_{1},[x_{2},x_{3}]]+[[x_{1},x_{2}],x_{3}]+(-1)^{|x_{1}||x_{2}|+|x_{2}|}[x_{2},[x_{1},x_{3}]]=0,
\end{eqnarray}
which is the odd version of the Leibniz identity above.
\end{definition}
If $[x_{1},x_{2}]$ is an odd Leibniz bracket,
then the redefined bracket $[x_{1},x_{2}]^{\p}:=(-1)^{|x_{1}|}[x_{1},x_{2}]$
satisfies the classical formula,
\begin{equation}\label{evenloday}
[x_{1},[x_{2},x_{3}]^{\p}]^{\p}
=[[x_{1},x_{2}]^{\p},x_{3}]^{\p}+(-1)^{(|x_{1}|+1)(|x_{2}|+1)}[x_{2},[x_{1},x_{3}]^{\p}]^{\p}.
\end{equation}
If $[.,,]_{0}$ is an even Leibniz bracket defined on $\g$,
then $s[.,.]_{0}(s^{-1}\ot s^{-1}):s\g\ot s\g\to s\g$ is an odd Leibniz bracket
on the shifted space $s\g$.
Because $|s|:=+1$,
$$
[sx_{1},sx_{2}]:=
s[.,.]_{0}(s^{-1}\ot s^{-1})(sx_{1},sx_{2})=(-1)^{|x_{1}|+1}s[x_{1},x_{2}]_{0}.
$$
\begin{remark}
Odd Lie algebras are by definition vector spaces with commutative products $(.,.)$
of degree odd satisfying the odd Leibniz identity above.
The odd Lie bracket is graded commutative, that is,
$(x_{1},x_{2})=(-1)^{|x_{1}||x_{2}|}(x_{2},x_{1})$.
If $(.,.)_{0}$ is an even Lie bracket,
then $s^{-1}(.,.)_{0}(s\ot s)$ is an odd Lie bracket.
\end{remark}
\indent
Let $(\g,(\cdot,\cdot),d)$ be a dg Lie algebra
with a Lie bracket $(\cdot,\cdot)$ of degree $0$ and a differential $d$
of degree $+1$. Define a new bracket $[.,.]$ by
\begin{equation}\label{compd}
[x_{1},x_{2}]:=(dx_{1},x_{2}),
\end{equation}
where $x_{1},x_{2}\in\g$.
This bracket is called a \textbf{binary derived bracket} of the Lie bracket.
It is easy to check that the derived bracket satisfies the odd Leibniz identity.
\begin{remark}[original formula \cite{Kos1}]
If we put $[x_{1},x_{2}]^{\p}:=(-1)^{|x_{1}|}(dx_{1},x_{2})$,
then this bracket satisfies (\ref{evenloday}).
We will use (\ref{compd}) as the definition of derived bracket,
because it is compatible with an operadic derived bracket in Section 2.3.
\end{remark}
\begin{remark}[bracket degree \cite{Kos1}]\label{bracketdegree}
If the degree of the Lie bracket $(.,.)$ is $b$, then the derived bracket is defined as
$$
[x_{1},x_{2}]:=(-1)^{|b|}(dx_{1},x_{2}).
$$
In the case of the binary derived bracket, one can forget the sign $(-1)^{|b|}$, even if $b=odd$.
However, in the case of homotopy derived brackets introduced in Section 6.2,
the sign associated with the bracket degree is important and essential.
\end{remark}

\subsection{Algebraic operads (\cite{LV})}

Let $S_{n}$ be the $n$th symmetric group and let $\P(n)$ an $S_{n}$-module\footnote{
more correctly, the group ring $\mathbb{K}S_{n}$-module.
}.
A collection of $\P(n)$, $\P:=(\P(1),\P(2),...)$, is called an $\S$-module.
Given an $\S$-module $\P$,
one can define a functor (so-called Schur functor) as follows.
$$
\F_{\P}V:=\bigoplus_{n\ge 1}\P(n)\ot_{S_{n}}V^{\ot n},
$$
where $V$ is a $\mathbb{K}$-vector space
and $\ot_{S_{n}}:=\ot_{\mathbb{K}S_{n}}$.
In the category of $\S$-modules, a tensor product,
which is denoted by $\odot$, is defined by
\begin{definition}
$\P\odot\Q\iff\F_{\P\odot\Q}\cong\F_{\P}\F_{\Q}$.
\end{definition}
We consider a special $\S$-module, $\I:=(\mathbb{K},0,0,...)$.
It is easy to check that
$$
\P\odot\I\cong\P\cong\I\odot\P,
$$
namely, $\I$ is the unite element with respect to the tensor product.
\begin{definition}
Operads are by definition unital associative monoids
in the category of $\S$-modules.
Namely, an operad is an $\S$-module $\P$ equipped with
a binary product $\gamma:\P\odot\P\to\P$
and a unite map $\iota:\I\to\P$ satisfying unital associative law.
\end{definition}
The notion of operad morphism
is defined as a morphism of unital monoid in the category of $\S$-modules.\\
\indent
It is well-known that $\gamma:\P\odot\P\to\P$ is decomposed into partial products
$\gamma=(\c_{1},\c_{2},...)$, where $\c_{i}$ is a binary map such that
$$
\c_{i}:\P(m)\ot\P(n)\to\P(n+m-1),
$$
where $1\le i\le m$.
If $f\in\P(m)$, then $f$ is regarded as a formal multiplication of
\textbf{arity} $m$ and $f$ is expressed as $f=f(1,2,...,m)$.
The numbers, $1,2,...,m$, are called the \textbf{labels} of the \textbf{leaves} of $f$.
For any $g=g(1,...,n)\in\P(n)$, the operad structure $f\c_{i}g$
is defined as an insertion of $g$ in $f$ at the $i$th-leaf,
$$
(f\c_{i}g)(1,...,m+n-1):=f(1,...,i-1,g(i,...,i+n-1),...,m+n-1),
$$
for each $1\le i\le m$.
\begin{example}[End operad]\label{endoperad}
Let $V$ be a vector space.
Then the collection of $\End_{V}(n):=\Hom_{\mathbb{K}}(V^{\ot n},V)$
becomes an operad, which is called an endomorphism operad.
\end{example}
\begin{definition}[operad algebras]
Let $\P$ be an operad.
$\P$-algebras are by definition vector
spaces $V$ equipped with operad morphisms $\P\to\End_{V}$.
\end{definition}
The morphism $\P\to\End_{V}$ in above definition is called a representation of $\P$.
The notion of algebra (e.g. associative, Lie, Leibniz,...) is defined as a representation of operad.
\begin{definition}
The free operad over an $\S$-module
is the free associative monoid in the category of $\S$-modules
and the free operad over $\P$ is denoted by $\T\P$.
\end{definition}
\begin{definition}
Let $E=E(2)$ be a special $\S$-module such that $E(n\neq 2)=0$.
A \textbf{binary quadratic operad} over $E$ with $R$ is by definition
$$
\P:=\T{E}/(R),
$$
where $R$ is a sub $\S$-module of $(\T E)(3)$
and $(R)$ is the ideal generated by $R$.
By definition $\P(1):=\mathbb{K}$.
The generator of the ideal, $R$,
is called a \textbf{quadratic relation} of $\P$.
\end{definition}
Since $\T{E}$ is the free operad over $E$,
$\T{E}$ is generated by $E$ with $\gamma=(\c_{i})$.
For example,
$$
\T{E}(3)=<e\c_{1}e^{\p}, e\c_{2} e^{\p} \ | \ e,e^{\p}\in E>.
$$
If $\P$ is a binary quadratic (bq, for short) operad generated by $E$ with $R$,
then $\P(2)=E$ obviously.
Elements of $\P(2)$ are regarded as formal binary products,
which are denoted by $1*2$, $1\cdot 2$, $(1,2)$, $[1,2]$ and so on.
The structure of bq operad is completely determined by $E$ and $R$.
Hence we sometimes denote by $\P=(E,R)$ the bq operad.\\
\indent
We recall fundamental examples of binary quadratic operads.
\begin{example}[\cite{GK}]
The commutative associative operad is
$$
\Com:=\T(1\cdot 2)/(R_{\Com}),
$$
where $1\cdot 2=2\cdot 1$ is a formal commutative product
and $R_{\Com}$ is the space generated by the associative law,
$$
(1\cdot 2)\cdot 3-1\cdot(2\cdot 3)\equiv 0,
$$
where $(1\cdot 2)\cdot 3:=(1\cdot 2)\c_{1}(1\cdot 2)$
and $1\cdot(2\cdot 3):=(1\cdot 2)\c_{2}(1\cdot 2)$.
\end{example}
\begin{example}[\cite{GK}]
The Lie operad is
$$
\Lie:=\T\big((1,2)\big)/(R_{\Lie}),
$$
where $(1,2)=-(2,1)$ is a formal Lie bracket
and $R_{Lie}$ is the Jacobi identity,
$$
((1,2),3)+((3,1),2)+((2,3),1)\equiv 0.
$$
\end{example}
Roughly speaking, operads are abstract algebras without variables.\\
\indent
It is easy to see that for each $n$,
$$
\Com(n)=<1\cdot 2\cdot...\cdot n>\cong\mathbb{K}.
$$
If we put $1\ot 1:=1\cdot 2$, then
$\Com(n)={1}\ot\cdots\ot{1}={1}^{\ot n}$.
We will use this expression of $\Com$ in the next section.
\begin{lemma}\label{dimlieoperad}
$\dim\Lie(n)\cong(n-1)^{!}$ for each $n$.
\end{lemma}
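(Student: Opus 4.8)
The plan is to identify $\Lie(n)$ with the multilinear component of the free Lie algebra $\F_{\Lie}V$ on a space $V$ with basis $x_{1},\dots,x_{n}$, that is, with the span of bracket monomials in which each generator $x_{i}$ occurs exactly once, and to pin down an explicit basis of cardinality $(n-1)!$. The whole computation can be carried out inside the free associative algebra (tensor algebra) $\F_{\Ass}V=T(V)$, whose multilinear component $\Ass(n)$ is the $n!$-dimensional span of the words $x_{\sigma(1)}\cdots x_{\sigma(n)}$, $\sigma\in S_{n}$: the assignment $(a,b)\mapsto ab-ba$ defines an operad morphism $\Lie\to\Ass$, since $ab-ba$ is antisymmetric and satisfies the Jacobi identity in any associative algebra, and I would read off both a spanning bound and an independence bound through this morphism.

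The claimed basis consists of the left-normed brackets with the leaf $x_{1}$ pinned in the innermost position,
$$
b_{(i_{2},\dots,i_{n})}:=\big(\cdots\big((x_{1},x_{i_{2}}),x_{i_{3}}\big),\dots,x_{i_{n}}\big),
$$
where $(i_{2},\dots,i_{n})$ ranges over the $(n-1)!$ orderings of $\{2,\dots,n\}$. For the upper bound $\dim\Lie(n)\le(n-1)!$ I would argue that these span: the Jacobi identity $((1,2),3)=(1,(2,3))+((1,3),2)$ rewrites every multilinear bracket monomial as a combination of left-normed brackets $(\cdots((x_{j_{1}},x_{j_{2}}),\dots),x_{j_{n}})$, and then antisymmetry together with a second induction on Jacobi lets one transport the distinguished leaf $x_{1}$ into the innermost slot, collapsing the $n!$ left-normed monomials onto the $(n-1)!$ elements $b_{(i_{2},\dots,i_{n})}$.

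For the lower bound $\dim\Lie(n)\ge(n-1)!$ I would expand $b_{(i_{2},\dots,i_{n})}$ under $\Lie\to\Ass$ and track the words beginning with $x_{1}$. A short induction on the bracketing shows that, at each stage, applying $(W,x_{i_{k}})=Wx_{i_{k}}-x_{i_{k}}W$ leaves a \emph{unique} word starting with $x_{1}$, so that $b_{(i_{2},\dots,i_{n})}$ contains exactly one such word, namely $x_{1}x_{i_{2}}\cdots x_{i_{n}}$, with coefficient $+1$. These leading words are pairwise distinct basis elements of $\Ass(n)$, so projecting any relation $\sum c_{(i_{2},\dots,i_{n})}\,b_{(i_{2},\dots,i_{n})}=0$ onto the span of words beginning with $x_{1}$ forces all coefficients to vanish; hence the $b$'s are independent and $\dim\Lie(n)=(n-1)!$. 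I expect the genuine obstacle to be the spanning step, i.e.\ the reduction to the fixed-leaf left-normed form, which is the only part requiring a nontrivial induction, whereas the independence is immediate from the leading-word bookkeeping. As an alternative that bypasses the combinatorics altogether, one may invoke the Poincar\'e--Birkhoff--Witt isomorphism $\Ass\cong\Com\odot\Lie$: passing to exponential generating functions $f_{\P}(x)=\sum_{n\ge1}\tfrac{\dim\P(n)}{n!}x^{n}$ turns it into $\tfrac{x}{1-x}=\exp\!\big(f_{\Lie}(x)\big)-1$, whence $f_{\Lie}(x)=\log\tfrac{1}{1-x}=-\log(1-x)=\sum_{n\ge1}\tfrac{x^{n}}{n}$ and therefore $\dim\Lie(n)=(n-1)!$.
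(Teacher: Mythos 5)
Your proposal is correct and follows essentially the same route as the paper: the paper's one-line proof asserts that the $(n-1)!$ normed brackets with one leaf pinned (right-normed with $n$ innermost there, left-normed with $x_{1}$ innermost in your version --- an immaterial difference of convention) form a basis, and you supply the spanning argument via Jacobi plus antisymmetry and the independence argument via the leading-word count under $\Lie\to\Ass$ that the paper leaves implicit. Both your main argument and the generating-function alternative are sound.
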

\begin{proof}
An arbitrary Lie bracket is expressed as a linear combination of
the right normed brackets
$(\sigma_{1},...,(\sigma_{n-2},(\sigma_{n-1},n)))$,
where $\sigma\in S_{n-1}$.
\end{proof}
\begin{example}[\cite{Lod2}]
The Leibniz or Loday operad is
$$
\Leib:=\T([1,2],[2,1])/(R_{\Leib}),
$$
where $[1,2]$ is a formal Leibniz bracket,
$[2,1]$ is the transposition of $[1,2]$
and $R_{\Leib}$ is the even Leibniz identity,
$$
[1,[2,3]]-[[1,2],3]-[2,[1,3]]\equiv 0.
$$
\end{example}
Because $[\sigma_{1},...,[\sigma_{n-2},[\sigma_{n-1},\sigma_{n}]]]$
is the base of $\Leib(n)$,
\begin{lemma}[\cite{Lod2}]\label{dimleib}
$\dim\Leib(n)=n$ for each $n$.
\end{lemma}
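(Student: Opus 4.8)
The plan is to adapt the counting argument used for $\Lie$ in Lemma~\ref{dimlieoperad}: I would produce an explicit basis of the arity-$n$ component $\Leib(n)$ out of the right-normed brackets named just before the statement and then read off the dimension. The claim splits into two genuinely separate tasks, a spanning statement and a linear-independence statement, followed by an enumeration of the resulting basis.

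First I would prove that the right-normed brackets displayed in the preceding sentence span $\Leib(n)$. This is a terminating rewriting argument driven by the even Leibniz relation $R_{\Leib}$ in the form $[[a,b],c]=[a,[b,c]]-[b,[a,c]]$; each rewrite replaces a left-nested bracket by right-nested ones and strictly decreases a complexity measure counting the left-nested occurrences, so iterating expresses every element of $\Leib(n)$ as a combination of fully right-normed brackets. This yields a spanning family given precisely by those canonical brackets, with no further reduction coming from the quadratic relation.

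Second, and this is where the real content sits, I would show that these canonical brackets are linearly independent, so that the spanning family is in fact a basis. The cleanest route is to fix a faithful model of the free Leibniz algebra on a space $V$ and restrict to its multilinear arity-$n$ part: any nontrivial relation among the right-normed brackets would descend to a relation in the model and contradict faithfulness. Equivalently, one can use the Koszul-dual pairing between $\Leib$ and $\Zinb$ to build, for each canonical bracket, a functional on $\Leib(n)$ that detects that bracket and annihilates the others.

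Counting the basis produced in the first step then gives $\dim\Leib(n)$ exactly as asserted, which finishes the proof. I expect the linear-independence step to be the main obstacle: the spanning rewrite and the concluding enumeration are routine bookkeeping once a concrete model is chosen, whereas independence requires actually exhibiting that model (or the separating functionals) rather than merely declaring the canonical monomials to be ``a base''.
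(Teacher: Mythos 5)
Your overall route coincides with the paper's: the entire ``proof'' given there is the one-line assertion, credited to \cite{Lod2}, that the right-normed brackets $[\sigma_{1},\ldots,[\sigma_{n-2},[\sigma_{n-1},\sigma_{n}]]]$ form a basis of $\Leib(n)$, so your spanning-plus-independence decomposition is exactly what that citation compresses, and you locate the substance correctly. The spanning step works as you describe: rewriting with $[[a,b],c]=[a,[b,c]]-[b,[a,c]]$ and inducting on the length of the left-hand factor terminates in right-normed normal forms. For independence, the faithful model you would need is precisely Loday's: the tensor module $\bar{T}V=\bigoplus_{n\ge 1}V^{\ot n}$ carries a Leibniz bracket under which $v_{1}\ot\cdots\ot v_{n}$ corresponds to the right-normed monomial, and checking the universal property is the genuine work that \cite{Lod2} supplies. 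Be warned, however, that your proposed alternative via the Koszul pairing with $\Zinb$ is circular in this paper's logical order: $\Zinb$ enters only in Section 4 as $\Leib^{!}$, and perfection of the arity-$n$ pairing is not available before the dimensions in question are known.

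The one concrete defect is the final enumeration, which you never actually perform. The right-normed brackets are indexed by all of $S_{n}$ --- unlike the Lie case of Lemma \ref{dimlieoperad}, where antisymmetry lets one pin the last entry and leaves only $\sigma\in S_{n-1}$ --- so your basis has $n!$ elements, not $n$. The printed statement ``$\dim\Leib(n)=n$'' has to be read in the paper's superscript-factorial notation (compare ``$(n-1)^{!}$'' in Lemma \ref{dimlieoperad}) as $n!$, and the paper itself uses it that way, deducing $\dim\s\Leib(3)=6$ in the proof of Theorem \ref{maintheorem}. Your closing claim that counting gives the dimension ``exactly as asserted'' conceals this: had you carried out the count, you would either have obtained $n!$ and been forced to reconcile it with the displayed $n$, or obtained $n$ and been wrong for every $n\ge 3$.
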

\begin{example}[Zinbiel operad \cite{Zinb}]\label{operadZinb}
The Zinbiel operad is defined by
$$
\Zinb:=\T(1*2,2*1)/(R_{\Zinb}),
$$
where $1*2$ is a noncommutative product
and $2*1$ is the transposition of $1*2$.
The quadratic relation of $\Zinb$ is
$$
1*(2*3)-(1*2)*3-(2*1)*3\equiv 0.
$$
\end{example}
We will use this operad in Section 4.
\medskip\\
\indent
We recall differential graded operads.
\begin{definition}
A dg operad is an operad $\P$ such that
for each $n$, $(\P(n),d)$ is a complex and the differential $d$
is a derivation with respect to the operad structure.
\end{definition}
We denote by $\s$ (bold-faced) an operadic degree shifting operator.
If $\P=(\P(n))$ is an operad, $\s\P$ is the shift of $\P$ such that
$$
(\s\P)(n)\cong s^{-1}\ot\P(n)\ot s^{\ot n},
$$
where the degrees of $s$ and $s^{-1}$ (the inverse of $s$)
are respectively $+1$ and $-1$.
Hence $(\s\P)(n)$ is isomorphic to $s^{n-1}\P(n)\ot sgn_{n}$,
where $sgn_{n}$ is the sign representation of a symmetric group $S_{n}$.
The inverse of $\s$, $\s^{-1}\P$, is also defined by the same manner.

\subsection{Operadic derived brackets}

We recall the universal version of binary derived bracket construction
introduced in \cite{U2}.
\begin{definition}[Chapoton \cite{Chap}]
The permutation operad, $\Perm$, is a binary quadratic operad over
$(1\diamond 2,2\diamond 1)$ with the quadratic relation,
$$
(1\diamond 2)\diamond  3=(2\diamond  1)\diamond  3=1\diamond  (2\diamond  3).
$$
\end{definition}
We will observe that the operad $\Perm$ can be constructed
by using the commutative operad $\Com$ with a formal differential $d$.
Consider the free operad $\T(d,1\ot 1)$ over an $\S$-module $(d,1\ot 1)$,
where $d$ is a $1$-ary operator of degree $+1$
and $1\ot 1(:=1\cdot 2)$ is the binary commutative product.
\begin{definition}\label{operado}
$$
\O:=\T(d,{1}\ot{1})/(R_{\O}),
$$
where $R_{\O}$ is a quadratic relation generated by
\begin{eqnarray}
\nonumber dd&=&0,\\
\nonumber d({1}\ot{1})-d\ot{1}-{1}\ot{d}&=&0,\\
\label{assr} ({1}\ot{1})\ot{1}-{1}\ot({1}\ot{1})&=&0.
\end{eqnarray}
Here (\ref{assr}) is the associative law.
\end{definition}
\indent
The operad $\O$ naturally becomes a graded operad, $\O=(\O^{i})$,
whose degree is defined as the number of $d$.
Obviously, $\O^{0}\cong\Com$ and $\O^{n+1}(n)=0$ for each $n$.
\begin{lemma}
$(\O^{n-1}(n))\cong\s\Perm$.
\end{lemma}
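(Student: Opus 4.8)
The plan is to pin down $\O^{N-1}(N)$ by a normal-form argument, compute its dimension by a representation, and then match it with $\s\Perm$ slot by slot. First I would use the two defining relations $dd=0$ and $d({1}\ot{1})-d\ot{1}-{1}\ot{d}=0$ to bring every element of $\O(N)$ into a normal form. The second relation says that $d$ is a derivation for the commutative product, so pushing each $d$ through products repeatedly rewrites any generator as a linear combination of monomials $\prod_{i=1}^{N}(d^{\e_{i}}x_{i})$ with $\e_{i}\in\{0,1\}$, the bound $\e_{i}\le 1$ being forced by $dd=0$. Since the grading of $\O$ counts the number of $d$'s, the component $\O^{N-1}(N)$ is spanned by those monomials with $\sum_{i}\e_{i}=N-1$; equivalently, exactly one input, say $x_{k}$, is left undifferentiated. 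Thus $\O^{N-1}(N)$ is spanned by the $N$ elements $m_{k}:=x_{k}\cdot\prod_{j\neq k}(dx_{j})$, $k=1,\dots,N$, each of which is, up to sign, the $N$-fold iterated derived bracket based at $x_{k}$.

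Next I would show that these $N$ monomials are linearly independent, so that $\dim\O^{N-1}(N)=N=\dim\s\Perm(N)$. It suffices to exhibit one $\O$-algebra on which they act by independent operators. I would take the free graded-commutative algebra $A$ on even generators $w_{1},\dots,w_{N}$ and odd generators $\xi_{1},\dots,\xi_{N}$, with derivation differential determined by $dw_{i}:=\xi_{i}$ and $d\xi_{i}:=0$, so that $d^{2}=0$; this is a representation of $\O$. Evaluating gives $m_{k}(w_{1},\dots,w_{N})=\pm\,w_{k}\prod_{j\neq k}\xi_{j}$, and these are pairwise distinct monomials of $A$ because the undifferentiated factor records $k$. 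Hence any relation $\sum_{k}c_{k}m_{k}=0$ forces all $c_{k}=0$, and the $m_{k}$ form a basis.

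Finally I would set up the isomorphism with $\s\Perm$. The three data match up: the undifferentiated input of $m_{k}$ corresponds to the distinguished slot of the permutative basis of $\Perm(N)$; the degree $N-1$ of $m_{k}$ corresponds to the operadic shift $s^{N-1}$; and the Koszul sign produced by reordering the odd factors $dx_{j}$ corresponds to the $sgn_{N}$-twist in $(\s\P)(N)\cong s^{N-1}\P(N)\ot sgn_{N}$. Since partial composition adds the number of $d$'s, one has $\O^{m-1}(m)\c_{i}\O^{n-1}(n)\subseteq\O^{(m-1)+(n-1)}(m+n-1)=\O^{(m+n-1)-1}(m+n-1)$, so the collection $(\O^{N-1}(N))_{N}$ is a suboperad; the map is then an operad morphism because the partial composite of two iterated derived brackets is again an iterated derived bracket, which is exactly the permutative composition rule of $\Perm$.

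The main obstacle I expect is the sign bookkeeping in the last step: one must check that the induced $S_{N}$-action on $\{m_{k}\}$, arising from permuting the odd factors $dx_{j}$ under the Koszul convention, is precisely the sign-twisted action on the distinguished-slot basis of $\Perm(N)$, and that these signs are compatible with the partial compositions $\c_{i}$ and with the shift $\s$. The normal-form reduction in the first step could a priori conceal relations among the $m_{k}$, but the explicit representation in the second step removes that worry by certifying that the $N$ spanning monomials are genuinely independent.
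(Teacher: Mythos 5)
Your proposal is correct, and it reaches the conclusion by a more explicit route than the paper's two-line sketch. The paper argues by presentation: it notes that $(\O^{n-1}(n))$ is a suboperad generated in arity $2$ by $d\ot 1$ and $1\ot d$, identifies these with the generators of $\s\Perm$, and verifies the single quadratic relation
$d\ot d\ot 1=-(d\ot 1)\c_{1}(d\ot 1)=(d\ot 1)\c_{1}(1\ot d)=(d\ot 1)\c_{2}(d\ot 1)$,
so that $\s\Perm$ surjects onto this suboperad; injectivity is left implicit (a dimension count, since both sides have dimension $n$ in arity $n$). You instead produce a normal form, exhibit the $n$ spanning monomials with exactly one undifferentiated leaf, and certify their linear independence by evaluating in the free graded-commutative dg algebra on $w_{i},\xi_{i}$ --- this is a genuine addition, as it supplies the ``it is easy to prove'' step the paper omits and pins down $\dim\O^{n-1}(n)=n$ honestly. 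What your write-up buys less cheaply is the other half: you assert that the distinguished-slot bookkeeping matches the $\Perm$ composition rule rather than checking the defining quadratic relation of $\s\Perm$ on the generators, which is exactly the computation the paper does display. Combining your independence argument with the paper's relation check would give a complete proof; either half alone leaves a (small, fillable) gap. The only blemishes are cosmetic: calling the monomials ``iterated derived brackets'' is loose (there is no bracket in $\O$; they become derived brackets only after tensoring with $\Lie$), and the sign/$sgn_{n}$ bookkeeping you flag as the main obstacle is indeed where the remaining work lives, though it is routine under the Koszul convention.
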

\begin{proof}(sketch)
It is easy to prove that $(\O^{n-1}(n))$ is a suboperad of $\O$.
Up to degree, $1\diamond  2\cong d\ot 1$ and $2\diamond  1\cong 1\ot d$.
From the property of differential, $(\O^{n-1}(n))$ satisfies
$$
d\ot d\ot 1=-(d\ot 1)\c_{1}(d\ot 1)=(d\ot 1)\c_{1}(1\ot d)=(d\ot 1)\c_{2}(d\ot 1),
$$
which is the quadratic relation of $\s\Perm$.
\end{proof}
The binary bracket of $\Lie\ot\s\Perm:=(\Lie(n)\ot\Perm(n))$
is regarded as a derived bracket.
\begin{eqnarray*}
(d1,2)&\cong&(1,2)\ot(d\ot 1),\\
(1,d2)&\cong&(1,2)\ot(1\ot d).
\end{eqnarray*}
Hence $(1,2)\ot(d\ot 1)$ is an odd Leibniz bracket, in fact,
\begin{proposition}[\cite{Chap} see also \cite{V} or \cite{U2}]
$\s\Leib\cong\Lie\ot\s\Perm$.
\end{proposition}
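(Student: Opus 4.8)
The plan is to build an explicit operad morphism $\phi:\s\Leib\to\Lie\ot\s\Perm$ out of the operadic derived bracket, and then to prove that it is an isomorphism by matching dimensions and exhibiting a transparent correspondence of bases. Recall that $\s\Leib$ is the free operad on a single odd binary bracket modulo the ideal generated by the odd Leibniz identity (\ref{oddloday}); hence a morphism out of $\s\Leib$ is uniquely determined by the image of its generator, provided that image satisfies (\ref{oddloday}). I would set $\phi([1,2]):=(1,2)\ot(d\ot 1)$. Since $(d\ot 1)\in(\s\Perm)(2)$ sits in degree $+1$, the image is an odd binary operation, as required. That $(1,2)\ot(d\ot 1)$ indeed satisfies (\ref{oddloday}) is the operadic incarnation of the elementary fact that the derived bracket of a dg Lie algebra is odd Leibniz: in $\Lie\ot\s\Perm$ the $\Perm$-factor carries exactly the relations $dd=0$ and $d({1}\ot{1})=d\ot{1}+{1}\ot d$ of Definition \ref{operado}, while the $\Lie$-factor carries the Jacobi identity, and these are precisely the ingredients of that classical computation. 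This yields a well-defined operad morphism $\phi$.

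Next I would compare dimensions arity by arity. The right-normed monomials $[x_{\sigma(1)},[\dots,x_{\sigma(n)}]]$ with $\sigma\in S_{n}$ form a basis of $\Leib(n)$ (Lemma \ref{dimleib}), so $\dim\s\Leib(n)=n!$. On the other side, $\dim(\Lie\ot\s\Perm)(n)=\dim\Lie(n)\cdot\dim\Perm(n)=(n-1)!\cdot n=n!$ by Lemma \ref{dimlieoperad} together with $\dim\Perm(n)=n$. For the latter I would use the $\O$-model: $(\s\Perm)(n)=\O^{n-1}(n)$ has a basis $\{e_{k}\}_{k=1}^{n}$, where $e_{k}$ carries a copy of $d$ on every leaf except the $k$-th; indeed $dd=0$ forces at most one $d$ per leaf, and $n-1$ copies of $d$ over $n$ leaves leave exactly one leaf undifferentiated, so $e_{k}$ records the active leaf $k$ of $\Perm$. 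Since the two $\S$-modules agree in every arity, it now suffices to show $\phi$ is surjective.

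The heart of the argument is the basis correspondence. Under the derived-bracket rule $[a,b]\mapsto(da,b)$, the right-normed Leibniz monomial $[x_{i_{1}},[\dots,[x_{i_{n-1}},x_{i_{n}}]]]$ is sent by $\phi$ to $(x_{i_{1}},(\dots,(x_{i_{n-1}},x_{i_{n}})))\ot e_{i_{n}}$: a right-normed Lie monomial, tensored with the $\Perm$-element whose active leaf is the innermost label $i_{n}$ (the only undifferentiated argument). I would then organize the right-normed Leibniz basis by the innermost label $k=i_{n}$. For each fixed $k$, the remaining $(n-1)!$ orderings of $i_{1},\dots,i_{n-1}$ produce, in the $\Lie$-factor, exactly the right-normed Lie monomials with the fixed innermost leaf $k$, and these form a basis of $\Lie(n)$ by the evident relabelling of Lemma \ref{dimlieoperad}; all of them are tensored with the single element $e_{k}$. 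Letting $k$ range over $1,\dots,n$ shows that $\phi$ carries the right-normed Leibniz basis onto a basis of $\bigoplus_{k}\Lie(n)\ot\langle e_{k}\rangle=\Lie(n)\ot\s\Perm(n)$. Hence $\phi$ is bijective in every arity and is an isomorphism of operads.

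I expect the main obstacle to lie in two places. First, in the well-definedness step the odd Leibniz computation must be carried out in the graded, shifted setting, so the Koszul signs generated by $s$ and $s^{-1}$ have to be tracked carefully for $\phi$ to be an honest operad morphism. Second, in the bijectivity step one must resist the temptation to treat the whole family of right-normed Lie monomials over all of $S_{n}$ as independent — it is not, since $\dim\Lie(n)=(n-1)!$. The point is that the simultaneous tensoring with the $\Perm$-factor $e_{i_{n}}$ separates these monomials by their innermost label, so that within each block $\Lie(n)\ot\langle e_{k}\rangle$ one recovers a genuine basis. Note finally that a linear isomorphism is insensitive to the Koszul signs, so those signs need only be controlled for the morphism property, not for bijectivity.
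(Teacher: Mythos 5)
Your proposal is correct, but it is worth noting that the paper does not actually prove this proposition: it is quoted from Chapoton (see also Vallette and \cite{U2}), and the surrounding text only records the generator correspondence $(d1,2)\cong(1,2)\ot(d\ot 1)$, $(1,d2)\cong(1,2)\ot(1\ot d)$ as motivation. What you supply is a complete direct argument along exactly the lines that correspondence suggests: define $\phi$ on the generator by the operadic derived bracket, check the odd Leibniz relation using $dd=0$, the derivation rule and Jacobi, then match the arity-$n$ dimensions ($n!=\,(n-1)!\cdot n$) and exhibit the bijection of bases by sorting the right-normed Leibniz monomials according to their innermost (undifferentiated) label. This is essentially Chapoton's original combinatorial proof, and your key observation — that the $\Perm$-factor $e_{i_n}$ separates the right-normed Lie words by innermost leaf, so that each block $\Lie(n)\ot\langle e_k\rangle$ receives a genuine basis of $\Lie(n)$ via Lemma \ref{dimlieoperad} — is precisely the point that makes the count close. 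The alternative route in the cited reference of Vallette proceeds instead through Manin white products and Koszul duality; that is less elementary but is the mechanism the paper itself invokes later (Proposition \ref{keyprop}) to prove the generalization $\LL\cong\Lie\ot\D$, so your argument buys self-containedness here while the Manin-product argument buys the generalization. Two small points of care: the paper's Lemma \ref{dimleib} literally reads $\dim\Leib(n)=n$, but the intended (and later used) value is $n!$, which is what your count requires; and your well-definedness step does require carrying out the odd Leibniz verification inside the Hadamard product with the Koszul signs coming from composing the degree-$+1$ elements $d\ot 1$ — you flag this correctly, and it is the only place where signs matter, since bijectivity is sign-insensitive as you say.
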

The classical derived bracket construction is considered to be a representation
of this operadic identity.
In Section 3, we will introduce a generalization of above proposition.

\subsection{Distributive laws}

In this section, we recall the notion of operadic distributive law introduced in \cite{MM}.
Let $\P:=(E_{\P},R_{\P})$ and $\Q:=(E_{\Q},R_{\Q})$ be any bq operads.
We consider a linear map,
\begin{equation}\label{dmap}
\delta:\Q(2)\c\P(2)\Rightarrow\P(2)\c\Q(2),
\end{equation}
where
$\Q(2)\c\P(2):=<q\c_{i}p \ | \ q\in\Q(2),p\in\P(2),i\in\{1,2\}>$
and $\P(2)\c\Q(2)$ is also defined by the same manner.
Let $\Delta:=<x-\delta(x)>$ be the graph of the map
and let $\P\Q$ a new bq operad defined as follows.
$$
\P\Q:=(E_{\P}\oplus E_{\Q},R_{\P}\oplus\Delta\oplus R_{\Q}).
$$
We assume that the degree of $E_{\Q}$ is $+1$.
Then $\P\Q$ becomes a graded operad, in particular,
$$
\P\Q(4)=\P\Q^{0}(4)\oplus\P\Q^{1}(4)\oplus\P\Q^{2}(4)\oplus\P\Q^{3}(4).
$$
It is easy to see that
$\P\Q^{0}(4)\cong\P(4)$ and $\P\Q^{3}(4)\cong\Q(4)$.
The map $\delta$ or the relation $\Delta$
is called a \textbf{distributive law}, if the following identity naturally holds.
\begin{equation}\label{disdef4}
(\ol{\P}\odot\Q)(4)
\cong\P\Q^{1}(4)\oplus\P\Q^{2}(4),
\end{equation}
where $\ol{\P}:=(\P(2),\P(3))$.
There exists a natural epimorphism,
\begin{equation}\label{disdef5}
epi:(\ol{\P}\odot\Q)(4)\to\P\Q^{1}(4)\oplus\P\Q^{2}(4),
\end{equation}
which is induced from the universality of the tensor product $\odot$.
Hence (\ref{disdef4}) is equivalent to that (\ref{disdef5}) is mono.
If (\ref{disdef4}) holds, then $\P\Q$ is globally decomposed into $\P\odot\Q$.
\begin{example}
Let $\P=\Com$ be the commutative associative operad
and let $\Q=\Lie$ the Lie operad.
Then the derivation condition below is a distributive law.
\begin{eqnarray*}
\ \Lie(2)\c\Com(2)&\Rightarrow&\Com(2)\c\Lie(2),\\
\ [1,2\cdot 3]&\Rightarrow&[1,2]\cdot 3+2\cdot [1,3].
\end{eqnarray*}
The induced operad $\Com\odot\Lie(=:\Poiss)$ is the Poisson operad.
\end{example}

\section{Lie-Leibniz algebras}

\subsection{Definition and Examples}

\begin{definition}
Let $\g$ be a space with a Lie bracket $(.,.)$ of degree $0$
and a Leibniz bracket $[.,.]$ of degree $+1$.
We call the triple $(\g,(.,.),[.,.])$ a \textbf{Lie-Leibniz algebra} of bidegree $(0,1)$,
if the following two relations are satisfied,
\begin{eqnarray}
\label{LLre01}[x_{1},(x_{2},x_{3})]&=&([x_{1},x_{2}],x_{3})
+(-1)^{(|x_{1}|+1)|x_{2}|}(x_{2},[x_{1},x_{3}]),\\
\label{LLre02}[(x_{1},x_{2}),x_{3}]&=&([x_{1},x_{2}],x_{3})-(-1)^{|x_{1}||x_{2}|}([x_{2},x_{1}],x_{3}),
\end{eqnarray}
where $x_{1},x_{2},x_{3}\in\g$.
\end{definition}
In above definition,
if the degree of Lie bracket is $p$ and the one of Leibniz bracket is $q$,
then the bidegree is by definition $(p,q)$. However, $p$ and $q$ are not independent.
Namely, the parity of $p$ is different from the one of $q$.
In this section, we consider the case that $p$ is even and $q$ is odd.\\
\indent
The derived bracket $(dx_{1},x_{2})$ naturally satisfies
the defining condition of Lie-Leibniz algebras.
We should remark that
(\ref{LLre01}) is a consequence of the Jacobi identity,
however, (\ref{LLre02}) is not so.\\
\indent
We recall some examples of Lie-Leibniz algebras,
which arise in geometry.
\begin{example}[Lie algebra with invariant 2-form]\label{LAI2}
Let $(\h,(.,.),[.,,])$ be a $\mathbb{K}$-Lie algebra
with an invariant nondegenerate symmetric 2-form $(.,.):\h\ot\h\to\mathbb{K}$.
We consider a graded space $\g:=\h\oplus\mathbb{K}$,
where  by definition $|\h|:=1$ and $|\mathbb{K}|:=0$.
The two brackets $(.,.)$ and $[.,.]$ can be extended on $\g$ by
$[\h,\mathbb{K}]=[\mathbb{K},\mathbb{K}]=(\h,\mathbb{K})=(\mathbb{K},\mathbb{K})=0$.
Then $\g$ becomes a Lie-Leibniz algebra of bidegree $(-2,-1)$.
The identity (\ref{LLre01}) is equivalent to the following
invariance condition and (\ref{LLre02}) is trivial.
$$
[x_{1},(x_{2},x_{3})]=0=([x_{1},x_{2}],x_{3})+(x_{2},[x_{1},x_{3}]),
$$
where $x_{1},x_{2},x_{3}\in\h$.
\end{example}
\begin{remark}
If $\g$ is an even Leibniz algebra of homogeneous, then
it can be regarded as an odd algebra without suspension $s(-)$,
because the odd Leibniz identity (\ref{oddloday}) has the same form as the even version
when the parities of three variables are all odd.
\end{remark}
The sheaf version of above example is known as Courant algebroid.
\begin{example}[Courant algebroids \cite{Kos3}]
Let $E\to M$ be a vector bundle over a smooth manifold $M$
equipped with a smooth nondegenerate symmetric pairing $(.,.)$ on $E$
and a smooth $\mathbb{R}$-bilinear bracket $[.,.]$ on $\Gamma E$,
where $\Gamma E$ is the space of smooth sections of $E$.
The pairing $(.,.)$ is $C^{\infty}(M)$-bilinear on $\Gamma E$, i.e.,
$(.,.):\Gamma E\ot_{C^{\infty}(M)}\Gamma E\to C^{\infty}(M)$.
Suppose that there exists a bundle map, or derivation representation
$\rho:E\to TM$, where $TM$ is the tangent bundle on $M$.
We consider a graded space $\g:=\Gamma E\oplus C^{\infty}(M)$,
where by definition $|\Gamma E|:=1$ and $|C^{\infty}(M)|:=0$.
The bracket $[.,.]$ can be extended on $\g$ by the semi-direct product,
$$
[e_{1}\oplus f_{1},e_{2}\oplus f_{2}]_{\g}:=[e_{1},e_{2}]\oplus\rho(e_{1})(f_{2})+\rho(e_{2})(f_{1}),
$$
where $e_{i}\oplus f_{i}\in\g$, $i\in\{1,2\}$. The degree of $[.,.]_{\g}$ is $-1$ on $\g$.
The pairing is also extended on $\g$ as a Lie bracket of degree $-2$.
The quadruple $(E,(.,.),[.,.],\rho)$ is a Courant algebroid if and only if
$(\g,(.,.),[.,.]_{\g})$ is a smooth Lie-Leibniz algebra of bidegree $(-2,-1)$.
\end{example}
It is known that
the Lie bracket of Courant algebroid is a symplectic structure (or symplectic bracket)
and the Leibniz bracket of Courant algebroid (so-called Courant bracket)
is a derived bracket of the symplectic bracket (cf. Roytenberg \cite{Roy}.)\\
\indent
The next example, omni-Lie algebras, is regarded as a toy model of Courant algebroid.
\begin{example}[Omni-Lie algebras in Weinstein \cite{We}]
Let $V$ be a vector space.
The omni-Lie algebra over $V$ is the space $\E_{V}:=\gl(V)\oplus V$
equipped with a Leibniz bracket $[.,.]$ and a symmetric pairing $(.,.)$,
where the two brackets are defined by
\begin{eqnarray*}
[g_{1}+v_{1},g_{2}+v_{2}]&:=&[g_{1},g_{2}]+g_{1}(v_{2}),\\
(g_{1}+v_{1},g_{2}+v_{2})&:=&g_{1}(v_{2})+g_{2}(v_{1}),
\end{eqnarray*}
where $g_{1},g_{2}\in\gl(V)=\End(V)$ and $v_{1},v_{2}\in V$.
In a similar way as above examples, we put $|\E_{V}|:=1$ and $|V|:=0$
and consider a graded space $\g:=\E_{V}\oplus V$.
The structure of omni-Lie algebra can be extended on $\g$, and then,
the triple $(\g,(.,.),[.,.])$ becomes a Lie-Leibniz algebra of bidegree $(-2,-1)$.
\end{example}


\subsection{Lie-Leibniz operad}

We denote by $\LL$ the operad of Lie-Leibniz algebras.
This is a binary quadratic operad over
$$
\LL(2):=\Lie(2)\oplus\s\Leib(2)
$$
with the quadratic relation generated by
\begin{eqnarray}
\label{nla}[1,[2,3]]+[[1,2],3]+[2,[1,3]]&\equiv&0,\\
\label{nlb}[1,(2,3)]-([1,2],3)-(2,[1,3])&\equiv&0,\\
\label{nlc}[(1,2),3]-([1,2],3)+([2,1],3)&\equiv&0,\\
\label{nld}(1,(2,3))-((1,2),3)-(2,(1,3))&\equiv&0.
\end{eqnarray}
Here (\ref{nla}) is the odd Leibniz identity
and eq (\ref{oddloday}) is a representation of (\ref{nla}), that is,
$$
(\ref{oddloday})=\Big([1,[2,3]]+[[1,2],3]+[2,[1,3]]\Big)(x_{1}\ot x_{2}\ot x_{3}).
$$
We introduce a new operad, which is called a \textbf{deriving operad}.
\begin{definition}\label{derop}
The deriving operad, $\D$, is defined as a suboperad of $\O$ such that
$$
\D(n):=\O^{0}(n)\oplus\cdots\oplus\O^{n-1}(n),
$$
for each $n$.
The operad $\O$ has been defined in Definition \ref{operado}.
\end{definition}
In general $\D$ has the following form.
\begin{eqnarray*}
\D^{0}&=&\Com,\\
\D^{1}(2)&=&<d\ot{1} \ , \ {1}\ot d>,\\
\D^{1}(3)&=&<d\ot{1}\ot{1} \ , \ {1}\ot d\ot{1} \ , \ {1}\ot{1}\ot d>,\\
\D^{2}(3)&=&<d\ot d\ot {1} \ , \ d\ot {1}\ot d \ , \ {1}\ot d\ot d>,\\
\cdots&=&\cdots.
\end{eqnarray*}
Brackets in $\LL$ are correspond to derived brackets in $\Lie\ot\D$,
for example,
\begin{eqnarray*}
\ [1,[2,3]]&\cong&(1,(2,3))\ot(d\ot d\ot{1}),\\
\ [(1,2),3]&\cong&((1,2),3)\ot(d\ot {1}\ot{1}+{1}\ot d\ot {1}),\\
\ [1,(2,3)]&\cong&(1,(2,3))\ot(d\ot {1}\ot {1}),
\end{eqnarray*}
which implies
\begin{proposition}\label{keyprop}
$\LL\cong\Lie\ot\D$.
\end{proposition}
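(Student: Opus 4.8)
The plan is to establish the isomorphism $\LL \cong \Lie \ot \D$ by exhibiting an explicit correspondence between the generators and relations of the two operads, and then checking that it respects the operad structure. First I would set up the dictionary at the level of binary operations. The operad $\Lie \ot \D$ has $(\Lie \ot \D)(2) = \Lie(2) \ot \D(2)$, and since $\D(2) = \D^{0}(2) \oplus \D^{1}(2) = \Com(2) \oplus \langle d\ot 1, 1 \ot d\rangle$, we get a Lie part $\Lie(2) \ot (1\ot 1)$ of degree $0$ and a Leibniz part $\Lie(2) \ot \langle d \ot 1, 1\ot d\rangle$ of degree $+1$. Following the operadic derived bracket recipe from Section 2.3, I would identify the Lie bracket $(1,2)$ with $(1,2) \ot (1\ot 1)$, and the odd Leibniz bracket $[1,2]$ with $(1,2)\ot(d \ot 1)$. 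This matches the dimension count: $\dim \Lie(2) = 1$, so $\LL(2) = \Lie(2)\oplus\s\Leib(2)$ is two-dimensional, and the target has one Lie generator plus one Leibniz generator, also two-dimensional (the element $(1,2)\ot(1\ot d)$ being the transposed Leibniz bracket $[2,1]$, accounted for in $\s\Leib(2)$).

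Next I would verify that the four quadratic relations (\ref{nla})--(\ref{nld}) of $\LL$ are precisely the images of the defining relations of $\Lie \ot \D$ under this dictionary. Using the displayed correspondences in the excerpt, the Jacobi identity (\ref{nld}) comes directly from the Jacobi relation in the $\Lie$ factor tensored with the commutative product in $\D$. The relations (\ref{nlb}) and (\ref{nlc}) should follow from the derivation rule $d(1\ot 1) = d\ot 1 + 1\ot d$ in $\D$ combined with the Jacobi identity: expanding $[1,(2,3)] \cong (1,(2,3))\ot(d\ot 1\ot 1)$ and applying the $d$-Leibniz rule distributes the differential across the tree, which is exactly the invariance-type identity (\ref{nlb}); similarly (\ref{nlc}) encodes how $d$ applied to the inner bracket of $[(1,2),3]$ splits as $d\ot 1\ot 1 + 1 \ot d \ot 1$. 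Finally, the odd Leibniz identity (\ref{nla}) reproduces $[1,[2,3]] \cong (1,(2,3))\ot(d\ot d\ot 1)$, where the two $d$'s and the condition $dd = 0$ force the Jacobi-type cancellation of the nested brackets.

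Having matched generators and relations, I would then promote this to a genuine operad isomorphism rather than just a presentation-level bijection. The cleanest route is to construct an explicit operad morphism $\LL \to \Lie \ot \D$ on generators via the dictionary above, check it is well-defined (the relations of $\LL$ map to $0$, which is the content of the previous paragraph), and then construct the inverse on the generators $(1,2)\ot(1\ot 1)$ and $(1,2)\ot(d\ot 1)$ of $\Lie \ot \D$. Since both are binary quadratic operads determined entirely by their degree-$2$ and degree-$3$ parts, it suffices to establish the isomorphism in arity $3$ and invoke the quadratic structure; the compatibility with the partial products $\c_{i}$ is then automatic from the way the derived bracket respects insertion.

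The hard part will be the arity-$3$ bookkeeping: one must confirm that the spanning sets on both sides have equal dimension and that no extra relations are hidden. On the $\LL$ side this means computing $\dim \LL(3)$ directly from (\ref{nla})--(\ref{nld}), and on the $\Lie \ot \D$ side it means computing $\dim(\Lie \ot \D)(3) = \sum$ over the graded pieces $\Lie(3)\ot \D^{k}(3)$ using $\dim \Lie(3) = 2$ (Lemma \ref{dimlieoperad}) and the explicit bases of $\D^{1}(3)$ and $\D^{2}(3)$ listed in the excerpt, while correctly tracking the $S_{3}$-action and the Koszul signs introduced by the degree-shift operator $\s$. The potential subtlety is that the map $d(1\ot 1) = d\ot 1 + 1\ot d$ together with the antisymmetry of the Lie bracket could create coincidences or unexpected linear dependences among the derived-bracket monomials; I would need to check that the epimorphism from the tensor product onto $\LL(3)$ is in fact an isomorphism, which is essentially the statement that $\{(\ref{IA}),(\ref{IB})\}$ is a distributive law — the very fact the paper proves next. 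Thus I expect this proposition to be logically intertwined with the distributive-law result, and the dimension count in arity $3$ is where the real verification lives.
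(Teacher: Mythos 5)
Your overall strategy --- set up the derived-bracket dictionary on binary generators, check that the relations (\ref{nla})--(\ref{nld}) are satisfied in $\Lie\ot\D$, and then settle the matter by a dimension count in arity $3$ --- is the same as the paper's. The paper computes $\dim R_{\LL}=13$, hence $\dim\LL(3)=27-13=14$, matches this against $\dim(\Lie\ot\D)(3)=\dim\Lie(3)\times\dim\D(3)=2\times 7=14$, and concludes that $R_{\LL}$ is exactly the quadratic relation of $\Lie\ot\D$. So far your plan reproduces this. (A small slip along the way: $\LL(2)=\Lie(2)\oplus\s\Leib(2)$ is three-dimensional, spanned by $(1,2)$, $[1,2]$, $[2,1]$, matching $\Lie(2)\ot\D(2)$ with $\dim\D(2)=3$; it is not two-dimensional.)

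The genuine gap is the sentence ``since both are binary quadratic operads determined entirely by their degree-$2$ and degree-$3$ parts.'' For $\LL$ this holds by definition, but $\Lie\ot\D$ is the aritywise (Hadamard) tensor product, and such a product of binary quadratic operads is not automatically binary quadratic: it need not be generated by its arity-$2$ part, and even when it is, it may carry relations in arities $\ge 4$ that are not consequences of the arity-$3$ ones. Without this input, agreement in arities $2$ and $3$ says nothing about higher arities, and your argument stops short of an isomorphism of operads. The paper closes precisely this gap by invoking Proposition 15 of Vallette to identify $\Lie\ot\D$ with the Manin white product $\Lie\c_{M}\D$, which is binary quadratic by construction; you would need this (or an explicit normal form for $(\Lie\ot\D)(n)$) before the arity-$3$ count becomes conclusive. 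Relatedly, your closing guess that the proposition is ``logically intertwined with the distributive-law result'' inverts the paper's logic: the distributive law is an arity-$4$ statement about the composite product $\odot$, and its proof \emph{uses} this proposition (to get $\dim\LL^{1}(4)=24$ and $\dim\LL^{2}(4)=36$), whereas the proposition is proved independently via the Hadamard product. Routing the proof of the proposition through the distributive law would be circular relative to the paper's development.
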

\begin{proof}
It is easy to prove that $\D$ is a bq operad generated by
$d\ot 1$, $1\ot d$ and $1\ot 1$.
The quadratic relation of $\D$ is naturally induced from
the associative law and the property of differential.\\
\indent
By using Proposition 15 in \cite{V} one can show that
\begin{equation}\label{LDLieD}
\Lie\ot\D\cong\Lie\c_{M}\D,
\end{equation}
where $\c_{M}$ is a white product of Manin (See Appendix.)
Hence $\Lie\ot\D$ is also a bq operad.
It is obvious that $\LL(2)\cong(\Lie\ot\D)(2)$.
The dimension of the quadratic relation $R_{\LL}$ is $13(=6+3+3+1)$,
which yields $\dim\LL(3)=14(=27-13)$.
Because $\dim\D(3)=7$, $\dim(\Lie\ot\D)(3)=14(=2\times 7)$.
The space $R_{\LL}$ is isomorphic to a subspace of the quadratic relation of $\Lie\ot\D$,
on the other hand, $\LL(3)\cong(\Lie\ot\D)(3)$.
Hence $R_{\LL}$ is identified with the quadratic relation of $\Lie\ot\D$.
\end{proof}
From $\dim\Lie(n)=(n-1)!$ (recall Lemma \ref{dimlieoperad} above),
\begin{corollary}
$$
\dim\LL(n)=(n-1)!\sum_{m=1}^{n}\binom{n}{m}.
$$
\end{corollary}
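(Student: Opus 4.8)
The plan is to reduce the statement to a single dimension count for the deriving operad $\D$, and then to evaluate a binomial sum. By Proposition \ref{keyprop} we have $\LL\cong\Lie\ot\D$, where $\ot$ is the arity-wise tensor product of $\S$-modules (so that $(\Lie\ot\D)(n)=\Lie(n)\ot\D(n)$, as in the identification $\Lie\ot\s\Perm:=(\Lie(n)\ot\Perm(n))$ used earlier). Dimensions therefore multiply arity-wise, and together with $\dim\Lie(n)=(n-1)!$ from Lemma \ref{dimlieoperad} we get $\dim\LL(n)=(n-1)!\,\dim\D(n)$. Everything thus comes down to computing $\dim\D(n)$.

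First I would fix a normal form for the elements of $\O(n)$. Because the generator $1\ot 1$ is commutative and associative, the unique commutative monomial of arity $n$ is $1^{\ot n}$; because $d$ is a derivation for this product, any $d$ applied at an internal vertex distributes down to the leaves; and because $dd=0$, no leaf can carry more than one $d$. Hence the monomials $a_{1}\ot\cdots\ot a_{n}$ with $a_{i}\in\{1,d\}$ span $\O(n)$, and the degree of such a monomial equals the number of $a_{i}$ equal to $d$. Granting their linear independence, these monomials form a basis, so $\dim\O^{m}(n)=\binom{n}{m}$, the number of ways to choose the $m$ leaves carrying a $d$. The explicit bases listed for $\D^{m}(n)$ in the excerpt are exactly the small cases of this count; as a further check, $\dim\O^{n-1}(n)=\binom{n}{n-1}=n$, which is consistent with the isomorphism $\O^{n-1}(n)\cong\s\Perm$ and recovers the known value $\dim\Perm(n)=n$.

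Next, Definition \ref{derop} truncates $\O$ at degree $n-1$, whence
$$
\dim\D(n)=\sum_{m=0}^{n-1}\dim\O^{m}(n)=\sum_{m=0}^{n-1}\binom{n}{m}.
$$
Re-indexing $m\mapsto n-m$ and using $\binom{n}{m}=\binom{n}{n-m}$ rewrites this as $\sum_{m=1}^{n}\binom{n}{m}$ (both sums equal $2^{n}-1$), which is precisely the factor appearing in the claimed formula. Multiplying by $\dim\Lie(n)=(n-1)!$ yields $\dim\LL(n)=(n-1)!\sum_{m=1}^{n}\binom{n}{m}$, as desired.

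The genuinely routine steps are the derivation and $dd=0$ reductions and the reindexing of the binomial sum; the one point demanding care is the linear independence of the normal-form monomials in $\O^{m}(n)$, i.e.\ that the quadratic relations of $\O$ impose no identities beyond commutativity, associativity, the Leibniz rule for $d$, and $dd=0$. This is the same structural fact already packaged into the explicit description of $\D$ and used in the proof of Proposition \ref{keyprop}; for a complete argument I would confirm it by presenting $\D$ as a binary quadratic operad with a confluent rewriting system driving every monomial to its leaf-labelled form, so that the leaf-placements of $d$ are a genuine basis.
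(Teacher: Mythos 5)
Your proposal is correct and follows essentially the same route the paper intends: the corollary is read off from Proposition \ref{keyprop} ($\LL\cong\Lie\ot\D$, with dimensions multiplying arity-wise), Lemma \ref{dimlieoperad}, and the count $\dim\D(n)=\sum_{m=0}^{n-1}\binom{n}{m}=\sum_{m=1}^{n}\binom{n}{m}$ coming from the leaf-placements of $d$ in $\O^{m}(n)$. Your remark that the only non-routine point is the linear independence of the normal-form monomials is well taken; the paper assumes this implicitly via its explicit basis listing for $\D$ and the identification $\widetilde{\D}\cong\D$ in the appendix.
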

\indent
Now, we give the main result of this section.
The relation $\{(\ref{nlb}),(\ref{nlc})\}$ defines a mapping
\begin{equation}\label{maind}
\delta_{\LL}:\s\Leib(2)\c\Lie(2)\Rightarrow \Lie(2)\c\s\Leib(2).
\end{equation}
\begin{theorem}\label{maintheorem}
The map $\delta_{\LL}$ is a distributive law of $\Lie$ over $\s\Leib$, that is,
$$
(\ol{\Lie}\odot\s\Leib)(4)\cong\LL^{1}(4)\oplus\LL^{2}(4),
$$
where $\ol{\Lie}=(\Lie(2),\Lie(3))$.
\end{theorem}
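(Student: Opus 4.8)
The plan is to run the criterion set up in the preliminaries. By (\ref{disdef5}) there is already a canonical epimorphism
$$
epi:(\ol{\Lie}\odot\s\Leib)(4)\to\LL^{1}(4)\oplus\LL^{2}(4),
$$
and, as recorded there, $\delta_{\LL}$ is a distributive law exactly when $epi$ is a monomorphism, that is an isomorphism, which is (\ref{disdef4}). Since every space in sight is finite dimensional, it therefore suffices to establish the single numerical identity $\dim(\ol{\Lie}\odot\s\Leib)(4)=\dim\LL^{1}(4)+\dim\LL^{2}(4)$: together with the surjectivity of $epi$ this forces bijectivity. So the proof reduces to two dimension counts.

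For the right-hand side I would use Proposition \ref{keyprop}. The isomorphism $\LL\cong\Lie\ot\D$ respects gradings, the degree on the left being the number of odd Leibniz brackets and the degree on the right the $\D$-degree, i.e. the number of factors $d$; this is read off from the sample correspondences displayed just before Proposition \ref{keyprop}. Since $\D(n)=\bigoplus_{k}\O^{k}(n)$ and $\O^{k}(n)$ has as basis the $\binom{n}{k}$ ways of attaching $k$ copies of $d$ to the leaves of ${1}\ot\cdots\ot{1}$, we get $\dim\O^{k}(4)=\binom{4}{k}$ and hence
$$
\dim\LL^{k}(4)=\dim\Lie(4)\cdot\dim\O^{k}(4)=6\binom{4}{k}.
$$
In particular $\dim\LL^{1}(4)=24$ and $\dim\LL^{2}(4)=36$, so $\dim\LL^{1}(4)+\dim\LL^{2}(4)=60$ (consistently $6+60+24=90=\dim\LL(4)$, the two outer terms being $\LL^{0}(4)\cong\Lie(4)$ and $\LL^{3}(4)\cong\s\Leib(4)$).

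For the left-hand side I would evaluate the composition product through dimension generating series. Splitting $(\ol{\Lie}\odot\s\Leib)(4)$ by the arity $k\in\{2,3\}$ of the outer $\ol{\Lie}$-operation gives
$$
(\ol{\Lie}\odot\s\Leib)(4)\cong\bigoplus_{k=2}^{3}\Lie(k)\ot_{S_{k}}\bigg(\bigoplus_{n_{1}+\cdots+n_{k}=4}\mathrm{Ind}_{S_{n_{1}}\times\cdots\times S_{n_{k}}}^{S_{4}}\s\Leib(n_{1})\ot\cdots\ot\s\Leib(n_{k})\bigg).
$$
The dimensions needed are $\dim\Lie(n)=(n-1)!$ and $\dim\s\Leib(n)=n!$; the latter follows from $\s\Leib\cong\Lie\ot\s\Perm$ together with $\dim\s\Perm(n)=\dim\O^{n-1}(n)=\binom{n}{n-1}=n$. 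Thus the dimension series of $\s\Leib$ is $\sum_{n\ge1}x^{n}=x/(1-x)$ and that of $\ol{\Lie}$ is $\tfrac12 x^{2}+\tfrac13 x^{3}$, and since $\s\Leib(0)=0$ the series of a composition product is the composition of the series, so
$$
\sum_{n}\frac{\dim(\ol{\Lie}\odot\s\Leib)(n)}{n!}\,x^{n}=\frac{1}{2}\Big(\frac{x}{1-x}\Big)^{2}+\frac{1}{3}\Big(\frac{x}{1-x}\Big)^{3}.
$$
The coefficient of $x^{4}$ equals $\tfrac12\cdot3+\tfrac13\cdot3=\tfrac52$, so $\dim(\ol{\Lie}\odot\s\Leib)(4)=4!\cdot\tfrac52=60$, matching the right-hand side; hence $epi$ is an isomorphism and the theorem follows.

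The step I expect to be the genuine obstacle is this left-hand count, because $\odot$ interlaces an induction from Young subgroups with $S_{k}$-coinvariants against $\Lie(k)$, and keeping these equivariances straight is where mistakes hide. The generating-series substitution is exactly the device that absorbs that bookkeeping, but using it rests on first pinning down $\dim\s\Leib(n)=n!$ and on the fact that the dimension series of a composition product composes (legitimate because $\s\Leib(0)=0$). The right-hand count, by contrast, is immediate once Proposition \ref{keyprop} and the identification of its grading are in hand. I would also emphasize that surjectivity of $epi$ comes free from (\ref{disdef5}) and is not to be re-proved, so only the equality of the two dimensions, equivalently injectivity of $epi$, is really at stake.
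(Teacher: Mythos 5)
Your proposal is correct and follows the same overall strategy as the paper: both reduce the theorem, via the canonical epimorphism (\ref{disdef5}), to checking that $\dim(\ol{\Lie}\odot\s\Leib)(4)$ equals $\dim\LL^{1}(4)+\dim\LL^{2}(4)=24+36=60$, and both extract the right-hand numbers from Proposition \ref{keyprop} (your formula $\dim\LL^{k}(4)=6\binom{4}{k}$ makes explicit what the paper only asserts). The one place you genuinely diverge is the left-hand count: the paper enumerates monomial bases by hand ($12+24=36$ in degree $2$ and $24$ in degree $1$), whereas you compose exponential generating series, $\tfrac12\big(x/(1-x)\big)^{2}+\tfrac13\big(x/(1-x)\big)^{3}$, and read off the coefficient of $x^{4}$. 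Your method is less error-prone and scales to all arities at once, at the cost of invoking the (standard, and valid since $\s\Leib(0)=0$) fact that dimension series of Schur functors compose under $\odot$; the paper's listing is more elementary and doubles as an explicit basis. One caution: you use $\dim\s\Leib(n)=n!$, which is correct and is what the paper actually uses (e.g.\ $\dim\s\Leib(3)=6$ in its own proof), but Lemma \ref{dimleib} as literally printed says $\dim\Leib(n)=n$ --- a typo for $n!$ --- so your independent derivation from $\s\Leib\cong\Lie\ot\s\Perm$ is a sensible safeguard rather than a redundancy.
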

We prove the theorem
by a direct computation of dimension.
\begin{proof}
From Proposition \ref{keyprop} we obtain
\begin{eqnarray*}
\dim\LL^{1}(4)&=&24,\\
\dim\LL^{2}(4)&=&36.
\end{eqnarray*}
To prove the theorem it suffices to show that
the dimension of $(\ol{\Lie}\odot\s\Leib)(4)$ is equal to $60(=24+36)$.\\
\indent
The subspace of $(\ol{\Lie}\odot\s\Leib)(4)$ of degree $+2$ is
\begin{equation}\label{ssdeg1}
\Lie(2)\c\Big(\s\Leib(2)\ot\s\Leib(2)\Big)\oplus\Lie(2)\c\s\Leib(3).
\end{equation}
The first term in (\ref{ssdeg1}) is generated by
the 12 monomials,
\begin{eqnarray*}
&&([1,2],[3,4]) \  \ ([1,3],[2,4]) \  \ ([1,4],[2,3])\\
&&([2,1],[3,4]) \  \ ([3,1],[2,4]) \  \ ([4,1],[2,3])\\
&&([1,2],[4,3]) \  \ ([1,3],[4,2]) \  \ ([1,4],[3,2])\\
&&([2,1],[4,3]) \  \ ([3,1],[4,2]) \  \ ([4,1],[3,2]).
\end{eqnarray*}
Hence we obtain
$$
\dim\Lie(2)\c\Big(\s\Leib(2)\ot\s\Leib(2)\Big)=12.
$$
The second term of (\ref{ssdeg1}) is generated by
the generators of 4-types,
\begin{equation*}
(1,\s\Leib(3)) \ , \ (2,\s\Leib(3)) \ , \ (3,\s\Leib(3)) \ , \ (4,\s\Leib(3)).
\end{equation*}
From Lemma \ref{dimleib}, $\dim\s\Leib(3)=6$, which yields,
$$
\dim\Lie(2)\c\s\Leib(3)=4\times 6=24.
$$
Thus we obtain
$$
\dim\Lie(2)\c\Big(\s\Leib(2)\ot\s\Leib(2)\Big)\oplus
\Lie(2)\c\s\Leib(3)=12+24=36.
$$
This number coincides with the dimension of $\LL^{2}(4)$.\\
\indent
We consider the subspace of $(\ol{\Lie}\odot\s\Leib)(4)$ of degree $+1$,
which has the form
\begin{equation}\label{ssdeg2}
\Lie(3)\c\s\Leib(2).
\end{equation}
Up to the Jacobi identity, (\ref{ssdeg2}) is generated by
\begin{eqnarray*}
(([1,2],3),4) && ((4,[1,2]),3)\\
(([2,1],3),4) && ((4,[2,1]),3)\\
(([1,3],2),4) && ((4,[1,3]),2)\\
(([3,1],2),4) && ((4,[3,1]),2)\\
(([1,4],2),3) && ((3,[1,4]),2)\\
(([4,1],2),3) && ((3,[4,1]),2)\\
\cdots && \cdots.
\end{eqnarray*}
There are totally 24 terms.
This is the dimension of $\LL^{1}(4)$.
\end{proof}
The distributive law implies that $\LL$ is decomposed into
$\LL\cong\Lie\odot\s\Leib$.
By the theorem proved in \cite{MM}, we obtain
\begin{corollary}
The operad $\LL$ is Koszul\footnote{
When the bar complex over an operad $\P$ is resolution,
the operad is said to be Koszul.
In general, an operad is {\em good}, if it is Koszul.
}.
\end{corollary}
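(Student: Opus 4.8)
The plan is to deduce the Koszulness of $\LL$ directly from the distributive law already established in Theorem \ref{maintheorem}, combined with the known Koszulness of the two constituent operads, by invoking the general criterion of Markl \cite{MM}. First I would record that both factors are Koszul. The Lie operad $\Lie$ is Koszul by the classical computation of Ginzburg--Kapranov \cite{GK}. For the Leibniz side, $\Leib$ is Koszul by Loday \cite{Lod2}, and its operadic suspension $\s\Leib$ is Koszul as well: the suspension functor $\s(-)$ is an invertible exact endofunctor on the category of $\S$-modules which, under the identification $(\s\P)(n)\cong s^{n-1}\P(n)\ot sgn_{n}$, carries the bar complex of $\P$ to that of $\s\P$ up to this predictable shift, so acyclicity is preserved. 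Hence both $\Lie$ and $\s\Leib$ are Koszul.

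Second, Theorem \ref{maintheorem} shows that $\delta_{\LL}$ is a distributive law, so the defining isomorphism $(\ol{\Lie}\odot\s\Leib)(4)\cong\LL^{1}(4)\oplus\LL^{2}(4)$ globally decomposes the operad as $\LL\cong\Lie\odot\s\Leib$. This is precisely the input hypothesis of Markl's theorem: whenever a binary quadratic operad arises from a distributive law between $\P$ and $\Q$ and both $\P$ and $\Q$ are Koszul, the unified operad $\P\odot\Q$ is Koszul. Applying this with $\P=\Lie$ and $\Q=\s\Leib$ yields the corollary at once.

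The conceptual content hidden inside Markl's criterion -- and the step I expect to be the main obstacle if one argued from scratch rather than citing \cite{MM} -- is the acyclicity of the bar (Koszul) complex of $\LL$ in the sense of the definition recalled in the footnote. Here the distributive law induces a filtration of that complex by the number of $\s\Leib$-operations, equivalently by the grading $\LL=\bigoplus_{i}\LL^{i}$ coming from the degree $+1$ assigned to $E_{\s\Leib}$, and the associated graded complex factors, under the monoidal product $\odot$, as the bar complex of $\Lie$ tensored with that of $\s\Leib$. Because we work over a field of characteristic zero, a K\"unneth-type argument shows that the tensor product of the two acyclic bar complexes is again acyclic; the resulting spectral sequence then collapses and forces acyclicity of the bar complex of $\LL$. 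The delicate point, which is exactly what Markl's theorem guarantees once the arity-$4$ isomorphism of Theorem \ref{maintheorem} is in hand, is that the distributive law is compatible with the bar differential at every arity and not merely in arity $4$; verifying this compatibility by hand would be the hard part, and it is what makes the citation to \cite{MM} the efficient route.
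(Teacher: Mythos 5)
Your proposal is correct and follows exactly the paper's route: the paper likewise deduces Koszulness of $\LL$ from Theorem \ref{maintheorem} via Markl's theorem in \cite{MM}, using that $\Lie$ and $\s\Leib$ are both Koszul and that the distributive law gives $\LL\cong\Lie\odot\s\Leib$. Your additional remarks on the filtration and spectral-sequence mechanism inside Markl's criterion are a sound gloss but are not part of the paper's (one-line) argument.
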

\begin{corollary}\label{ffleib}
Let $\g$ be an even Leibniz algebra and let $s^{-1}\g$ the shifted one.
The free Lie algebra, $\F_{\Lie}(s^{-1}\g)$,
becomes the free Lie-Leibniz algebra of bidegree $(0,1)$
in the category of odd Leibniz algebras.
In particular, if $\g$ is free, then the Lie-Leibniz algebra is also so.
\end{corollary}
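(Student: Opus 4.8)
The plan is to read everything off the decomposition $\LL\cong\Lie\odot\s\Leib$ produced by the distributive law of Theorem \ref{maintheorem}, together with the defining property of the monoidal product $\F_{\P\odot\Q}\cong\F_{\P}\F_{\Q}$. The structural principle I would invoke is the standard consequence of a distributive law (Beck \cite{Beck}, Markl \cite{MM}): since $\LL$ is built from a distributive law of $\Lie$ over $\s\Leib$, the free Lie functor lifts to the category of odd Leibniz algebras, and the forgetful functor from $\LL$-algebras to $\s\Leib$-algebras that forgets the Lie bracket acquires a left adjoint sending an odd Leibniz algebra $L$ to $\F_{\Lie}(L)$. The asserted statement is exactly that this left adjoint, evaluated at $L=s^{-1}\g$, is $\F_{\Lie}(s^{-1}\g)$.

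First I would make this relative free object explicit. On the underlying space of $\F_{\Lie}(L)$ put two products: the Lie bracket $(.,.)$ is the free Lie bracket, and the Leibniz bracket is the unique extension of the bracket of $L$ dictated by (\ref{LLre01}) and (\ref{LLre02}). Concretely, (\ref{LLre01}) lets one push a Leibniz bracket $[x_{1},(x_{2},x_{3})]$ through a Lie bracket, and (\ref{LLre02}) rewrites $[(x_{1},x_{2}),x_{3}]$; iterating, every Leibniz bracket reduces to iterated Lie brackets of Leibniz brackets acting only on elements of $L$, where the value is prescribed by the bracket of $L$. The Leibniz identity for this extended bracket, and its compatibility with the free Lie bracket, are guaranteed because the relations of $\s\Leib$ and of $\Lie$ survive intact in $R_{\Lie}\oplus\Delta\oplus R_{\s\Leib}$.

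The one thing that genuinely needs proof is that this extension is well defined, i.e. independent of the order in which Leibniz brackets are pushed past Lie brackets, and this is precisely the coherence content already supplied by Theorem \ref{maintheorem}. The isomorphism $(\ol{\Lie}\odot\s\Leib)(4)\cong\LL^{1}(4)\oplus\LL^{2}(4)$ says that the two reductions of an arity-four expression agree, so the rewriting creates no hidden relation at arity four; by Markl's theorem this arity-four condition forces the global decomposition $\LL\cong\Lie\odot\s\Leib$, i.e. well-definedness in all arities. Thus the main obstacle was dispatched in proving the distributive law, and the remaining work is only bookkeeping: checking that the signs and degrees introduced by the shift $s^{-1}$ are consistent, which follows from the remark that an even Leibniz bracket placed in odd degree satisfies the odd Leibniz identity in the same form as the even one.

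It then remains to record the universal property and the ``in particular'' clause. For any Lie-Leibniz algebra $A$, a morphism of odd Leibniz algebras $s^{-1}\g\to A$ extends uniquely along the above structure to a morphism of Lie-Leibniz algebras $\F_{\Lie}(s^{-1}\g)\to A$ by the universality of the free Lie functor, which is the required freeness in the category of odd Leibniz algebras. Finally, if $\g=\F_{\Leib}(W)$ is free, then $s^{-1}\g\cong\F_{\s\Leib}(s^{-1}W)$ by the definition of $\s\Leib$ as the operadic shift of $\Leib$, and the product formula gives
$$
\F_{\Lie}(s^{-1}\g)\cong\F_{\Lie}\F_{\s\Leib}(s^{-1}W)\cong\F_{\Lie\odot\s\Leib}(s^{-1}W)\cong\F_{\LL}(s^{-1}W),
$$
the absolutely free Lie-Leibniz algebra on $s^{-1}W$. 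The likely pitfall is purely expository: one must keep ``free'' relative to the odd-Leibniz adjunction in the first assertion, as opposed to the absolute freeness that appears only in the special case.
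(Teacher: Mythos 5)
Your proposal is correct and follows essentially the same route as the paper, which presents this corollary as an immediate consequence of the decomposition $\LL\cong\Lie\odot\s\Leib$ obtained from the distributive law of Theorem \ref{maintheorem} together with the standard Beck--Markl consequence that the free functor of the outer operad lifts to algebras over the inner one. Your additional elaboration (the rewriting of Leibniz brackets past Lie brackets via (\ref{LLre01})--(\ref{LLre02}), coherence supplied by the arity-four condition, and the identification $\F_{\Lie}\F_{\s\Leib}\cong\F_{\LL}$ for the absolutely free case) is a faithful unpacking of what the paper leaves implicit.
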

\section{Koszul duals of $\LL$ and $\s\LL$}

\subsection{$\LL^{!}$}

The Koszul dual operad of $\P=(E,R)$ is
by definition $\P^{!}:=(E^{\v},R^{\bot})$,
where $E^{\v}$ is the invariant\footnote{
The pairing of the duality
satisfies $<e(12),e^{\v}(12)>=-<e(21),e^{\v}(21)>$,
where $e\in E$ and $e^{\v}\in E^{\v}$.
}  dual space of $E$ and $R^{\bot}$
is the orthogonal space of $R$.
A relation of dimension, $\dim\P(3)=\dim R^{\bot}$, holds.\\
\indent
It is known that the dual of a distributive law,
i.e., the dual map of $\delta$ defined in (\ref{dmap})
is also a distributive law.
Therefore, the Koszul dual of $\LL=\Lie\odot\s\Leib$ is
$$
\LL^{!}=(\s^{-1}\Zinb)\odot\Com.
$$
where $\Com=\Lie^{!}$ and $\Zinb=\Leib^{!}$ (recall Example \ref{operadZinb}.)\\
\indent
The aim of this section is to determine the quadratic relation of $(\s^{-1}\Zinb)\odot\Com$.
The dual of the Lie bracket $(1,2)$ is the commutative associative product $1\cdot 2$.
We denote the generators of $\s^{-1}\Zinb$ by $1*2$ and $2*1$,
which are the duals of the odd Leibniz brackets $[1,2]$ and $[2,1]$.
\begin{proposition}
The quadratic relation of $(\s^{-1}\Zinb)\odot\Com$ is generated by
\begin{eqnarray}
\label{dual1}1*(2*3)+(1*2)*3-(2*1)*3&\equiv&0,\\
\label{dual2}(1*2)\cdot 3-1*(2\cdot 3)+(1\cdot 2)*3&\equiv&0,\\
\label{dual3}1\cdot(2\cdot 3)-(1\cdot 2)\cdot 3&\equiv&0,
\end{eqnarray}
where
(\ref{dual1}) is the quadratic relation of $\s^{-1}Zinb$ and
(\ref{dual2}) is a distributive law of $\s^{-1}\Zinb$ over $\Com$.
\end{proposition}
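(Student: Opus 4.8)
The plan is to compute $\LL^{!}=(E^{\v},R_{\LL}^{\perp})$ directly, where $E=\LL(2)$ and $R_{\LL}$ is the $13$-dimensional space generated by (\ref{nla})--(\ref{nld}), and to read the three families (\ref{dual1})--(\ref{dual3}) off the orthogonal complement. The structural observation that organizes everything is that $(\T E)(3)$ and $(\T E^{\v})(3)$, both of dimension $27$, are graded by the number of Leibniz, resp.\ Zinbiel, generators occurring in a monomial, and that the Koszul pairing is block diagonal for this grading: a monomial carrying $k$ brackets $[.,.]$ pairs nontrivially only with one carrying $k$ products $*$, because $\langle(.,.),1\cdot 2\rangle$ and $\langle[.,.],*\rangle$ are the only nonzero generator pairings. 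Hence $R_{\LL}^{\perp}$ is the direct sum of its degree $0$, $1$ and $2$ parts, of respective dimensions $2$, $6$ and $6$, summing to $27-13=14$, and it suffices to identify each summand with (\ref{dual3}), (\ref{dual2}) and (\ref{dual1}). This is consistent with the decomposition $\LL^{!}=(\s^{-1}\Zinb)\odot\Com$ already recorded above.

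I would treat the two extreme degrees first. In degree $0$ only the products $1\cdot 2$ appear, and the summand is the classical Koszul duality $\Com=\Lie^{!}$; the Jacobi relation (\ref{nld}) is orthogonal to associativity, giving the two-dimensional family (\ref{dual3}). In degree $2$ only the products $*$ appear, and the summand is $R_{\s\Leib}^{\perp}$, the orthogonal complement of the odd Leibniz identity (\ref{nla}). Computing this complement directly identifies it with the quadratic relation of $(\s\Leib)^{!}=\s^{-1}\Zinb$: up to the operadic desuspension it is the Zinbiel relation of Example \ref{operadZinb}, but the sign attached by $\s^{-1}$ to the two tree shapes differs, which reverses the coefficient of the left-combed monomial $(1*2)*3$ relative to the right-combed $1*(2*3)$. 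This is precisely the plus sign in (\ref{dual1}) where the plain Zinbiel relation carries a minus, and the six-dimensional $S_{3}$-orbit of (\ref{dual1}) then fills the summand.

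The heart of the proof is the degree $1$ part, which is the dual of the distributive-law map $\delta_{\LL}$ of (\ref{maind}) defined by (\ref{nlb}) and (\ref{nlc}). I would fix dual bases of the two twelve-dimensional mixed spaces by tree shape: a type-A monomial (Leibniz outer, Lie inner) such as $[(1,2),3]$ or $[1,(2,3)]$ pairs with $(1\cdot 2)*3$, resp.\ $1*(2\cdot 3)$, while a type-B monomial (Lie outer, Leibniz inner, the only independent shape after Lie antisymmetry) such as $([1,2],3)$ pairs with $(1*2)\cdot 3$. Writing $\Delta$ for the graph of $\delta_{\LL}$, spanned by the $S_{3}$-orbits of (\ref{nlb}) and (\ref{nlc}) and of dimension $6$, and $\Delta^{\v}$ for the span of the orbit of (\ref{dual2}), I would verify $\langle\Delta^{\v},\Delta\rangle=0$ by pairing the representative (\ref{dual2}) against (\ref{nlb}), against (\ref{nlc}), and, using $S_{3}$-invariance of the pairing, against enough translates to span $\Delta$. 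Since $\dim\Delta^{\v}=6=12-\dim\Delta=\dim\Delta^{\perp}$, orthogonality forces $\Delta^{\v}=\Delta^{\perp}$, which says exactly that (\ref{dual2}) is the distributive law dual to $\delta_{\LL}$.

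The main obstacle is sign bookkeeping, and it is also where the precise coefficients of (\ref{dual1}) and (\ref{dual2}) are pinned down. Two conventions must be combined consistently: the Koszul pairing on $(\T E)(3)$ attaches opposite signs to the composition patterns $\c_{1}$ and $\c_{2}$, and the invariant duality obeys $\langle e(12),e^{\v}(12)\rangle=-\langle e(21),e^{\v}(21)\rangle$, so that $\langle 2*1,[2,1]\rangle=-\langle 1*2,[1,2]\rangle$. Every orthogonality identity closes by a cancellation in which a $\c_{1}$-term is matched against a $\c_{2}$-term through the first rule, while the transposed Leibniz generators contribute through the second; for example the pairing of (\ref{dual1}) with the transpose of (\ref{nla}) vanishes only because both signs intervene. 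I expect that propagating these two signs uniformly across all monomials and all $S_{3}$-translates, rather than any conceptual difficulty, is the delicate part of the argument.
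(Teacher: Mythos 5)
Your argument is correct and rests on the same two pillars as the paper's own proof: the dimension count $\dim R^{\bot}_{\LL}=27-13=14$ and explicit orthogonality checks against $R_{\LL}$ using the Koszul pairing. The difference is one of organization. The paper exhibits all three families at once, counts $6+6+2=14$ directly (listing the six $S_{3}$-translates of (\ref{dual2}) explicitly), matches this against $\dim\LL(3)=14$, and then verifies orthogonality with the explicit pairing formula, carrying out one sample computation (the pairing of (\ref{dual2}) against (\ref{nlc})) and asserting the rest. You instead first observe that the pairing is block diagonal for the grading by the number of Leibniz (resp.\ Zinbiel) generators, so that $R^{\bot}_{\LL}$ splits into blocks of dimensions $2$, $6$, $6$; you dispose of the two extreme blocks by quoting the known dualities $\Lie^{!}=\Com$ and $(\s\Leib)^{!}=\s^{-1}\Zinb$ (with the desuspension accounting for the sign flip in (\ref{dual1})), so that only the middle block --- the dual of the distributive law $\delta_{\LL}$, where (\ref{dual2}) lives --- requires a fresh $6=12-6$ computation. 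Your packaging isolates the genuinely new content in a single block and explains conceptually why exactly three families appear; the paper's flat count buys uniformity, applying one pairing formula the same way everywhere without having to justify the block decomposition or the suspension signs separately. Both arguments close identically: a candidate of the correct dimension contained in the orthogonal complement must equal it, and in both cases the full sign bookkeeping is left at sketch level.
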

\begin{proof}(Sketch)
Let $R_{ZC}$ be the quadratic relation generated by
(\ref{dual1}), (\ref{dual2}) and (\ref{dual3}).
The relation (\ref{dual1}) generates 6-basis,
(\ref{dual3}) generates 2-basis
and (\ref{dual2}) generates the following 6-basis,
\begin{eqnarray}
\label{ddrel01}
(1*2)\cdot 3-1*(2\cdot 3)+(1\cdot 2)*3&\equiv&0,\\
\nonumber (2*1)\cdot 3-2*(1\cdot 3)+(2\cdot 1)*3&\equiv&0,\\
\nonumber (2*3)\cdot 1-2*(3\cdot 1)+(2\cdot 3)*1&\equiv&0,\\
\nonumber (3*2)\cdot 1-3*(2\cdot 1)+(3\cdot 2)*1&\equiv&0,\\
\nonumber (3*1)\cdot 2-3*(1\cdot 2)+(3\cdot 1)*2&\equiv&0,\\
\nonumber (1*3)\cdot 2-1*(3\cdot 2)+(1\cdot 3)*2&\equiv&0.
\end{eqnarray}
Hence $\dim R_{ZC}=14$, which satisfies the consistency condition $\dim\LL(3)=\dim R_{ZC}$.
The pairing $<,>$ which defines the Koszul duality is defined by
$$
<\mu\c_{i}\nu\ot l \ , \ \mu^{\p}\c_{j}\nu^{\p}\ot l^{\p}>:=(-1)^{i}(-1)^{l}
\delta_{ij}\delta_{ll^{\p}}<\mu,\mu^{\p}><\nu,\nu^{\p}>,
$$
where $l$, $l^{\p}$ are labels of trees
and $\c_{i}$ is the operad structure.
For example,
\begin{eqnarray*}
<(\ref{ddrel01}) ,(\ref{nlc})>&=&<(1*2)\cdot 3-1*(2\cdot 3)+(1\cdot 2)*3 \ , \ [(1,2),3]-([1,2],3)+([2,1],3)>\\
&=&-<(1*2)\cdot 3 \ , \ ([1,2],3)>+<(1\cdot 2)*3 \ , \ [(1,2),3]>\\
&=&1-1=0.
\end{eqnarray*}
In this way one can show that $R_{ZC}=R^{\bot}_{\LL}$.
\end{proof}
\subsection{$\s\LL^{!}$}
We consider the Lie-Leibniz operad of bidegree $(-1,0)$, i.e., $\s^{-1}\LL$.
In this case, since the Lie bracket is odd,
the differential $d$ satisfies
$$
[d,(1,2)]=d(1,2)+(d1,2)+(1,d2)=0,
$$
where $[d,(1,2)]$ is a graded commutator.
Hence (\ref{nlc}) is modified to
\begin{equation}\label{oodnlc}
[(1,2),3]+([1,2],3)+([2,1],3)=0,
\end{equation}
on the other hand, (\ref{nlb}) does not change.
\begin{corollary}
The Koszul dual of $\s^{-1}\LL$ is $\Zinb\odot\s\Com\cong\s(\s^{-1}\Zinb\odot\Com)$.
The quadratic relation of $\Zinb\odot\s\Com$ is
\begin{eqnarray}
\label{odddual1}1*(2*3)-(1*2)*3-(2*1)*3&\equiv&0,\\
\label{odddual2}(1*2)\cdot 3-1*(2\cdot 3)-(1\cdot 2)*3&\equiv&0,\\
\label{odddual3}(1\cdot 2)\cdot 3+1\cdot(2\cdot 3)&\equiv&0.
\end{eqnarray}
Here $1*2,2*1\in\Zinb(2)$ are even and $1\cdot 2\in\s\Com(2)$ is odd.
\end{corollary}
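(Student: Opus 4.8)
The plan is to deduce this Corollary from the preceding Proposition by transporting the computation of $\LL^{!}$ through the operadic suspension $\s$. The key input is the standard compatibility between Koszul duality and operadic suspension, namely $(\s^{-1}\P)^{!}\cong\s(\P^{!})$ for any binary quadratic operad $\P$; this follows from the description of operadic suspension as tensoring with the sign-twisted endomorphism operad of a one-dimensional odd space, under which Koszul duality exchanges $\s$ and $\s^{-1}$. Granting this, the already-established identity $\LL^{!}=(\s^{-1}\Zinb)\odot\Com$ immediately yields
$$
(\s^{-1}\LL)^{!}\cong\s(\LL^{!})=\s\big((\s^{-1}\Zinb)\odot\Com\big).
$$
It then remains to identify the right-hand side with $\Zinb\odot\s\Com$ and to read off the relations (\ref{odddual1})--(\ref{odddual3}).

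First I would record the effect of $\s$ on the two tensor factors. Since the generators $1*2,2*1$ of $\s^{-1}\Zinb$ are odd and those of $\Com$ are even in $\LL^{!}$, applying $\s$ raises the degree of the Zinbiel generators to even and relatively lowers the commutative generator to odd, giving $\Zinb$ on the first factor and $\s\Com$ on the second; this is precisely the asserted isomorphism $\s\big((\s^{-1}\Zinb)\odot\Com\big)\cong\Zinb\odot\s\Com$, where as stated $1*2,2*1\in\Zinb(2)$ are even and $1\cdot 2\in\s\Com(2)$ is odd. I would confirm this at the level of the monoidal decomposition by checking that the distributive law underlying $(\s^{-1}\Zinb)\odot\Com$ is carried by $\s$ to a distributive law of $\Zinb$ over $\s\Com$, so that the decomposition is preserved.

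The substantive step is to apply $\s$ to each of the defining relations (\ref{dual1})--(\ref{dual3}) and to track the resulting Koszul signs. Under the arity-dependent shift $(\s\P)(n)\cong s^{n-1}\P(n)\ot sgn_{n}$, a composite $\mu\c_{i}\nu$ of arity $3$ acquires a sign depending on $i$ and on the degrees of $\mu$ and $\nu$; carrying these signs through the six, two and six basis monomials of (\ref{dual1}), (\ref{dual3}) and (\ref{dual2}) respectively should flip exactly the one term in each relation that distinguishes (\ref{odddual1})--(\ref{odddual3}) from (\ref{dual1})--(\ref{dual3}): the sign of $(1*2)*3$ in the Zinbiel relation, the sign of $(1\cdot 2)*3$ in the distributive law, and the sign making the now-odd product $1\cdot 2$ anti-associative in (\ref{odddual3}). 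The main obstacle is precisely this sign bookkeeping: the suspension signs must be computed consistently with the Koszul sign convention and with the duality pairing $<\mu\c_{i}\nu\ot l,\mu^{\p}\c_{j}\nu^{\p}\ot l^{\p}>$ used in the Proposition. As an independent check, I would verify directly that (\ref{odddual1})--(\ref{odddual3}) are orthogonal, under this pairing, to the modified relations (\ref{nla}), (\ref{nlb}), (\ref{oodnlc}) and the odd Jacobi identity of $\s^{-1}\LL$, and that the dimension count $\dim(\s^{-1}\LL)(3)=\dim R^{\perp}=14$ is maintained.
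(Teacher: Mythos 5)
Your proposal is correct, but it reaches the corollary by a genuinely different route than the paper does. The paper does not formally suspend $\LL$: it re-derives the quadratic relations of the bidegree-$(-1,0)$ operad from the derived-bracket heuristic (an odd Lie bracket forces $[d,(1,2)]=d(1,2)+(d1,2)+(1,d2)=0$, which turns (\ref{nlc}) into (\ref{oodnlc}) while leaving (\ref{nlb}) alone), and then the corollary is obtained by running the same orthogonality computation with the pairing $\langle\mu\c_{i}\nu\ot l,\mu^{\p}\c_{j}\nu^{\p}\ot l^{\p}\rangle$ as in the proposition for $\LL^{!}$, now against the modified relation set. You instead invoke the general compatibility $(\s^{-1}\P)^{!}\cong\s(\P^{!})$ and push the already-computed relations (\ref{dual1})--(\ref{dual3}) through the suspension, which is legitimate and in fact makes the isomorphism $\Zinb\odot\s\Com\cong\s(\s^{-1}\Zinb\odot\Com)$ tautological rather than something to be observed afterwards. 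What your route buys is economy: no second orthogonality computation, and the three sign flips in (\ref{odddual1})--(\ref{odddual3}) are forced uniformly by the arity-dependent shift $(\s\P)(n)\cong s^{n-1}\P(n)\ot sgn_{n}$ (your (\ref{odddual3}) check against the paper's remark on odd commutative algebras, and the recovery of the standard even Zinbiel relation of Example \ref{operadZinb} in (\ref{odddual1}), are good consistency tests). What it costs is two auxiliary facts you must actually verify rather than merely cite: that $\s$ commutes with the composite product, $\s(\P\odot\Q)\cong\s\P\odot\s\Q$ (the degree count $(k-1)+\sum_{j}(i_{j}-1)=n-1$ makes this work, and it carries the distributive law along as you say), and that the paper's bidegree-$(-1,0)$ operad, defined by the modified relations, really is the operadic desuspension of $\LL$ rather than merely an analogue of it; the paper's notation asserts this but your argument silently depends on it. With those two points made explicit, your proof is complete, and your proposed fallback (direct orthogonality against (\ref{nla}), (\ref{nlb}), (\ref{oodnlc}) and the odd Jacobi identity, with the dimension count $14$) is precisely the paper's implicit argument.
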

Since $1\cdot 2\in\s\Com(2)$ is odd, it is anti-commutative, i.e., $1\cdot 2=-2\cdot 1$.
\begin{remark}
Odd commutative algebras are by definition vector spaces
equipped with anti-commutative products $\cdot$ of degree odd
satisfying the odd associative law,
\begin{eqnarray*}
x_{1}\cdot x_{2}&=&-(-1)^{|x_{1}||x_{2}|}x_{2}\cdot x_{1},\\
(x_{1}\cdot x_{2})\cdot x_{3}&=&-(-1)^{|x_{1}|}x_{1}\cdot(x_{2}\cdot x_{3}).
\end{eqnarray*}
Redefine a new product by $x_{1}\wedge x_{2}:=(-1)^{|x_{1}|}x_{1}\cdot x_{2}$.
Then it satisfies the classical identities,
$x_{1}\wedge x_{2}=(-1)^{(|x_{1}|+1)(|x_{2}|+1)}x_{2}\wedge x_{1}$
and
$(x_{1}\wedge x_{2})\wedge x_{3}=x_{1}\wedge(x_{2}\wedge x_{3})$.
\end{remark}
In the final of this section, we introduce two lemmas below,
which will be used in the next section.
\begin{lemma}\label{newlemma1}
We put $1*\cdots*m:=(((1*2)*3)*\cdots)*m$.
The operad $\Zinb\odot\s\Com$ satisfies
\begin{multline*}
(1*\cdots*i)\cdot(i+1*\cdots*n)=\\
\sum_{k=1}^{n-1}\Big((1*\cdots*i)^{k}\doublecap(i+1*\cdots*n-1)\Big)*n
-\sum_{k=i+1}^{n-1}\Big((1*\cdots*i-1)\doublecap(i+1*\cdots*n)^{i+k}\Big)*i\\
+\Big((1*\cdots*i-1)\doublecap(i+1*\cdots*n-1)\Big)*(i\cdot n),
\end{multline*}
where
$(1*\cdots*i)^{k}:=1*\cdots*(k\cdot k+1)*\cdots*i$
and $\doublecap$ is a shuffle product.
\end{lemma}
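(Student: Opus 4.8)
The plan is to prove the identity by induction on the arity $n$, rewriting every expression into the left-normed normal form $b_{1}*b_{2}*\cdots*b_{r}$ in which each $b_{j}$ is a $\cdot$-product of a block of labels. Such normal forms span $\Zinb\odot\s\Com$: the Zinbiel relation (\ref{odddual1}) lets me replace any right-nested factor $a*(b*c)$ by left-normed ones, the distributive law (\ref{odddual2}) lets me move the odd product $\cdot$ across the outermost $*$, and the odd anti-commutativity and associativity of $\cdot$ (the Remark preceding the lemma) let me transpose $\cdot$-factors inside a block. The shuffle $\doublecap$ appearing on the right is not extra structure; it is precisely the combinatorics generated by iterating these moves.

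First I would record a companion of the distributive law. Writing $a\cdot(b*c)\equiv\pm(b*c)\cdot a$ by odd commutativity and then applying (\ref{odddual2}) gives a rewriting rule of the schematic shape
$$
a\cdot(b*c)=\pm\,b*(c\cdot a)\pm(b\cdot c)*a,
$$
which returns the dot product of an element with a left half-shuffle to left-normed form. This is the one step that manufactures shuffles: after peeling the last letter off each factor via $1*\cdots*m=(1*\cdots*(m-1))*m$, pushing the surviving dot product through the final $*$ forces, at each stage, a binary choice --- carry the moving factor past the last block, or merge it into the last block --- and summing over all such histories is exactly the shuffle $\doublecap$.

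Then I would run the induction. Peeling $i$ off the first factor and $n$ off the second and applying (\ref{odddual2}) together with the companion rule splits the left-hand side into three groups according to what becomes the final block: a group ending in the singleton $n$, a group ending in the singleton $i$, and a single term ending in the merged block $i\cdot n$. The last group is immediately $\big((1*\cdots*(i-1))\doublecap(i+1*\cdots*(n-1))\big)*(i\cdot n)$. In the first two groups the remaining factors are dot products of strictly shorter left-normed words, so the inductive hypothesis applies and straightens them into the two sums; the index $k$ records the position at which the surviving dot was created, and the fact that a shuffle deconcatenates from the right (its last letter comes from one of the two shuffled words) is what separates the contributions into the sum over the first factor and the sum over the second.

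The hard part will be the sign bookkeeping together with the exact combinatorial matching. Because $\cdot$ carries odd degree, every transposition of $\cdot$-factors and every application of the companion rule introduces a Koszul sign, and one must check that these collapse to the single global sign pattern displayed (the lone minus in front of the second sum) instead of decorating the individual shuffle summands. Verifying this, and confirming that the ranges $1\le k\le n-1$ and $i+1\le k\le n-1$ are exactly the indices that survive the straightening --- with the degenerate cases where a factor has reduced to a single label serving as the base of the induction --- is where the real content lies; once the companion rule and the signs are pinned down, the operadic rewriting itself is routine.
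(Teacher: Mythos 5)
Your strategy is viable, but it is not the route the paper takes, and the difference matters for how much work is left over. The paper also inducts on arity, but in the opposite direction: instead of expanding the left-hand side, it takes the already-established identity $I(1,\dots,n)$ and composes it at a leaf with the generator $1*2$, producing $I(1*2,3,\dots,n+1)$; the Zinbiel relation (\ref{odddual1}) then identifies this with $I(1,2,\dots,n+1)$. All shuffle combinatorics and Koszul signs are inherited wholesale from the instance one arity lower, so the only new computation at each step is a single application of (\ref{odddual1}), and the base case is literally the generating relation (\ref{odddual2}). Your route --- peel the last letter off each factor, push the odd product through with (\ref{odddual2}) and its commuted companion, re-left-norm with the Zinbiel relation, and sort terms by their final block --- rebuilds the right-hand side from scratch at every arity. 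It is workable: the three groups you isolate do match the three pieces of the claimed formula, and the blocks $(1*\cdots*i)^{k}$ in the first sum arise from straightening $(1*\cdots*(i-1))\cdot i$ via the lower-arity statement (this is exactly Corollary \ref{keycoro} applied to a shorter word), and your argument has the genuine virtue of explaining \emph{why} the shuffle $\doublecap$ appears, which the paper's substitution trick leaves opaque. The price is that the sign and index verification you defer to the end is, in your approach, essentially the whole proof, whereas in the paper's approach it never has to be confronted. If you carry your plan out, treat the case $i=1$ (empty first tail) separately as a base case, since there the peeling step degenerates to the companion rule alone, and check the range $i+1\le k\le n-1$ of the second sum against the deconcatenation of the shuffle with particular care.
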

\begin{proof}(Sketch)
When $(i,n)=(2,1)$ or $(i,n)=(1,2)$,
the identity of the lemma is just (\ref{odddual2}).
Let $I=I(1,...,n)$ be the identity of the lemma,
for instance, $(\ref{odddual2})=I(1,2,3)$ holds.
By the assumption of induction, $I(1,...,n)$ holds.
This yields $I(1*2,3,...,n+1)$.
From the Zinbiel identity (\ref{odddual1}), we obtain
$$
I(1*2,3,...,n+1)=I(1,2,...,n+1).
$$
\end{proof}
\begin{corollary}\label{keycoro}
In above lemma, if $i=n-1$,
$$
(1*\cdots*n-1)\cdot n=\sum_{k=1}^{n-1}
1*\cdots*(k\cdot k+1)*\cdots*n.
$$
\end{corollary}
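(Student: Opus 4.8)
The plan is to derive the corollary directly from Lemma~\ref{newlemma1} by setting $i=n-1$; essentially no fresh computation is required, only a careful reading of the boundary terms. First I would substitute $i=n-1$ into the identity of the lemma. The decisive observation is that the right-hand block $i+1*\cdots*n$ then reduces to the single leaf $n$, so that the truncated block $i+1*\cdots*(n-1)$ is the empty word. Since every shuffle $\doublecap$ against the empty word is the trivial shuffle (with sign $+1$), all occurrences of $\doublecap$ disappear and each of the three pieces of the lemma simplifies.

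Next I would treat the three pieces in turn. The second sum runs over $k=i+1,\dots,n-1$, which for $i=n-1$ is the empty range $k=n,\dots,n-1$; hence it vanishes, and with it the only minus sign in the lemma. The first sum collapses to $\sum_{k}(1*\cdots*(n-1))^{k}*n$, and because $(1*\cdots*(n-1))^{k}=1*\cdots*(k\cdot k+1)*\cdots*(n-1)$ is defined precisely for $k=1,\dots,n-2$, it contributes exactly the summands $1*\cdots*(k\cdot k+1)*\cdots*(n-1)*n$ for those $k$. Finally the third term becomes $(1*\cdots*(n-2))*\big((n-1)\cdot n\big)=1*\cdots*(n-2)*\big((n-1)\cdot n\big)$, which is precisely the remaining $k=n-1$ summand. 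Adding the surviving first sum to the third term yields $\sum_{k=1}^{n-1}1*\cdots*(k\cdot k+1)*\cdots*n$, as asserted.

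The one point that deserves care — and the only place I expect any friction — is the bookkeeping at the top of the index range. One must notice that the first sum is genuinely truncated at $k=n-2$, since $(1*\cdots*(n-1))^{n-1}$ is undefined (the word $1*\cdots*(n-1)$ carries only the $*$'s indexed by $k=1,\dots,n-2$), and that the $k=n-1$ summand is instead supplied by the third term of the lemma. Since both the surviving first sum and the third term enter with a $+$ sign while the lone $-$ sits on the now-empty second sum, all $n-1$ terms appear with coefficient $+1$, exactly matching the statement of the corollary.
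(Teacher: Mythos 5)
Your proposal is correct and coincides with the paper's (implicit) argument: the corollary is stated there without proof precisely because it is the direct specialization $i=n-1$ of Lemma \ref{newlemma1}, with the second sum empty and the shuffles against the empty word trivial. Your extra care about the index boundary — that the first sum only supplies $k=1,\dots,n-2$ and the $k=n-1$ summand comes from the third term $\big(1*\cdots*(n-2)\big)*\big((n-1)\cdot n\big)$ — is exactly the right bookkeeping.
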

Suppose that $(A,*,\cdot)$ is a $\Zinb\odot\s\Com$-algebra.
From (\ref{odddual2}) and (\ref{odddual3}),
\begin{eqnarray*}
(a_{1}*a_{2})a_{3}&=&(-1)^{|a_{1}|}a_{1}*(a_{2}a_{3})+(a_{1}a_{2})*a_{3},\\
(a_{1}a_{2})a_{3}&=&-(-1)^{|a_{1}|}a_{1}(a_{2}a_{3}),
\end{eqnarray*}
where $a_{1},a_{2},a_{3}\in A$ and $a_{1}a_{2}:=a_{1}\cdot a_{2}$.
Corollary \ref{keycoro} is represented as follows.
$$
(a_{1}*\cdots*a_{n-1})a_{n}=
\sum_{k=1}^{m-1}(-1)^{|a_{1}|+\cdots+|a_{k-1}|}a_{1}*\cdots*a_{k-1}*(a_{k}a_{k+1})*\cdots*a_{n}.
$$
\begin{lemma}\label{zcder}
Let $d:A\to A*\cdots*A=A^{*m}$ be a derivation on
the $\Zinb\odot\s\Com$-algebra.
We put $da_{i}:=\sum_{(i_{1},...,i_{m})}a_{i_{1}}*\cdots*a_{i_{m}}$.
Then
\begin{multline*}
d\big((((a_{1}a_{2})a_{3})...)a_{n}\big)=\sum_{i=1}^{n}(-1)^{|a_{i}|(|a_{1}|+\cdots+|a_{i-1}|+m)+(m-i)(1+|d|)}\\
\sum_{(i_{1},...,i_{m})}\sum_{k=1}^{m}(-1)^{|a_{i_{1}}|+\cdots+|a_{i_{k-1}}|}
a_{i_{1}}*\cdots*(a_{i_{k}}a_{i_{k+1}})*\cdots*a_{im}*(((((a_{1}a_{2})a_{3})...)...\hat{a}_{i})...a_{n}),
\end{multline*}
where $\hat{\cdot}$ is an eliminator.
\end{lemma}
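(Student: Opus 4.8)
The plan is to prove the formula by combining the graded derivation property of $d$ with the structural identities (\ref{odddual2}) and (\ref{odddual3}) of a $\Zinb\odot\s\Com$-algebra. First I would fix a normal form for the left-nested product $(((a_1a_2)a_3)\cdots)a_n$. Since the $\cdot$-product is odd and odd-associative in the sense of (\ref{odddual3}), every reparenthesization only costs a Koszul sign; I record once and for all the sign relating the left-nested word to the standard product $a_1\cdot a_2\cdots a_n$, so that at the end I can re-nest the reduced chain $Q=(((a_1a_2)\cdots\hat a_i\cdots)a_n)$ without ambiguity. This step is purely bookkeeping but it pins down the target shape of the answer.

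Next I would apply the Leibniz rule for the degree-$|d|$ derivation $d$ across the $n-1$ occurrences of $\cdot$. This produces the outer sum $\sum_{i=1}^n$, the $i$-th summand being the word with $a_i$ replaced by $da_i$; commuting $d$ past $a_1,\dots,a_{i-1}$ yields the first batch of signs. I would then substitute $da_i=\sum_{(i_1,\dots,i_m)}a_{i_1}*\cdots*a_{i_m}$, so that each summand becomes an iterated $\cdot$-product one of whose factors is a $*$-word of length $m$.

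The core of the argument is to pull this $*$-word to the front and absorb the surrounding $\cdot$-multiplications into it. Here I would invoke Corollary \ref{keycoro} (equivalently, iterated use of the distributive law (\ref{odddual2})), which says precisely that right-$\cdot$-multiplying a $*$-word by a single element equals the sum over adjacent contractions of that element into the word; anti-commutativity of the odd product $\cdot$ together with (\ref{odddual3}) lets me first transport the $*$-word to a boundary so that the corollary applies. Applying this converts $\cdots\cdot(a_{i_1}*\cdots*a_{i_m})\cdot\cdots$ into $\sum_{k=1}^{m}a_{i_1}*\cdots*(a_{i_k}a_{i_{k+1}})*\cdots*a_{i_m}*Q$, producing the inner sum $\sum_{k=1}^{m}$ and the signs $(-1)^{|a_{i_1}|+\cdots+|a_{i_{k-1}}|}$.

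The main obstacle will be the sign bookkeeping: three independent sources of Koszul signs must be reconciled into the single compact exponent $|a_i|(|a_1|+\cdots+|a_{i-1}|+m)+(m-i)(1+|d|)$, namely the derivation signs from commuting $d$ past the earlier factors, the reassociation signs from (\ref{odddual3}), and the signs generated while transporting the weight-$(|a_i|+|d|)$ $*$-word across the remaining letters to its final position. I expect the cleanest route is induction on $n$, peeling off the outermost factor $a_n$ and applying the induction hypothesis to the inner product, since this isolates exactly one new commutation and one new application of Corollary \ref{keycoro} at each step and makes the accumulation of the $(m-i)(1+|d|)$ contribution transparent; the base case $n=1$ is just the definition $d(a_1)=da_1$.
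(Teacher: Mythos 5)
Your proposal follows essentially the same route as the paper's (sketched) proof: apply the Leibniz rule for $d$, use the odd commutative--associative law to transport the resulting $*$-word $da_i$ to the front, and then invoke Lemma \ref{newlemma1} (via Corollary \ref{keycoro}) to absorb the remaining $\cdot$-factor into the $*$-word as a sum over adjacent contractions. Your plan is in fact more careful than the paper's, which explicitly declines to verify the signs; your proposed induction on $n$ to pin down the exponent $|a_i|(|a_1|+\cdots+|a_{i-1}|+m)+(m-i)(1+|d|)$ is a reasonable way to supply that missing bookkeeping.
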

\begin{proof}(Sketch)
The identity of the lemma is followed from the commutative associative law
and Lemma \ref{newlemma1}. For each $i$,
\begin{eqnarray*}
\pm(((a_{1}a_{2})a_{3})...da_{i}...)a_{n}
&=&\pm da_{i}(((a_{1}a_{2})a_{3})...\hat{da}_{i}...)a_{n}\\
&=&\pm\sum_{(i_{1},...,i_{m})}\sum_{k=1}^{m}\pm\\
&&a_{i_{1}}*\cdots*(a_{i_{k}}a_{i_{k+1}})*\cdots*a_{im}*(((a_{1}a_{2})a_{3})...\hat{da}_{i}...)a_{n}.
\end{eqnarray*}
It is not necessarily to determine the sign of the identity.
\end{proof}

\section{Bar construction}

The aim of this section is to construct the bar complex
of Lie-Leibniz algebra along the canonical
method introduced in \cite{GK}.
\subsection{Preliminaries}
In this subsection we recall fundamental coalgebras, i.e.,
commutative coalgebras and Zinbiel coalgebras.\\
\indent
Let $V$ be a vector space and let $\bar{S}V$
the symmetric space over $V$ without unite,
$$
\bar{S}V:=V\oplus VV\oplus VVV\oplus\cdots,
$$
where $VV:=V\ot V/(v_{1}\ot v_{2}-(-1)^{|v_{1}||v_{2}|}v_{2}v_{1})$, $v_{1},v_{2}\in V$.
It is well-known that the cofree\footnote{in the category of nilpotent coalgebras}
commutative coalgebra over $V$ is $\bar{S}V$.
The commutative coproduct is defined as,
\begin{eqnarray*}
\Delta_{c}V&:=&0,\\
\Delta_{c}(v_{1}\cdots v_{n})&:=&
\sum_{\sigma\atop 1\le{i}\le n-1}\e(\sigma)(v_{\sigma(1)}\cdots v_{\sigma(i)} \ , \
v_{\sigma(i+1)}\cdots v_{\sigma(n)}),
\end{eqnarray*}
where $\e(\sigma)$ is the Koszul sign
and $\sigma$ is an $(i,n-i)$-unshuffle permutation\footnote{When $\sigma^{-1}$ is a shuffle permutation,
$\sigma$ is called an unshuffle permutation.}, that is,
$\sigma(1)<\cdots<\sigma(i)$ and $\sigma(i+1)<\cdots<\sigma(n)$.
Coderivations on $\bar{S}V$ are by definition endomorphisms $\pa:\bar{S}V\to\bar{S}V$ satisfying
\begin{equation}\label{defcoder}
(\pa\ot 1+1\ot \pa)\Delta_{c}=\Delta_{c}\pa.
\end{equation}
It is well-known that in general the space of coderivations
on the cofree operad-coalgebra over a base space
is isomorphic to the space of homomorphisms
from the coalgebra to the base space.
Hence in this case,
\begin{equation}\label{cohom1}
\Coder(\bar{S}V)\cong\Hom(\bar{S}V,V),
\end{equation}
where $\Coder(\bar{S}V)$ is the space of coderivations on $\bar{S}V$.
If $f:\bar{S}^{m}V\to V$, then the coderivation $\pa_{f}\cong f$
induced from $f$ is given by
\begin{multline}\label{cohom2}
\partial_{f}(v_{1}\cdots v_{n})=
\sum_{\sigma\atop m\le k\le n}\e(\sigma)(-1)^{|f|(|v_{\sigma(1)}|+\cdots+|v_{\sigma(i-1)}|)}\\
v_{\sigma(1)}\cdots v_{\sigma(i-1)}\cdot f(\overbrace{v_{\sigma(i)},...,v_{\sigma(k-1)},v_{k}}^{m})\cdot v_{k+1}\cdots v_{n},
\end{multline}
where $\sigma$ is $(i-1,m-1)$-unshuffle.
For example, when $m=2$ and $n=3$,
$$
\partial_{f}(v_{1}v_{2}v_{3})=
f(v_{1},v_{2})v_{3}+(-1)^{|v_{1}||f|}v_{1}f(v_{2},v_{3})+(-1)^{|v_{2}||f|+|v_{1}||v_{2}|}v_{2}f(v_{1},v_{3}).
$$
\begin{lemma}
The space $\g(=s^{-1}V)$ is a Lie algebra of degree $0$
if and only if $\bar{S}s\g(=\bar{S}V)$ is a dg coalgebra
with a binary codifferential $\pa$ of degree $-1$,
i.e., $\pa$ is binary and $\pa\pa=0$.
\end{lemma}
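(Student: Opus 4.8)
The final lemma asserts a bijective correspondence between Lie algebra structures on $\g = s^{-1}V$ (of degree $0$) and binary codifferentials $\pa$ (of degree $-1$, with $\pa\pa=0$) on the cofree cocommutative coalgebra $\bar{S}s\g = \bar{S}V$.

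\medskip

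The plan is to exploit the isomorphism (\ref{cohom1}), $\Coder(\bar{S}V)\cong\Hom(\bar{S}V,V)$, together with the explicit coderivation formula (\ref{cohom2}), to translate the coalgebraic data into a bracket on $\g$ and then match the two conditions ``$\pa$ binary'' and ``$\pa\pa=0$'' against ``graded antisymmetry'' and ``Jacobi identity.'' First I would fix what ``binary'' means: a codifferential $\pa$ is binary precisely when the corresponding homomorphism $f\in\Hom(\bar{S}V,V)$ is supported on the weight-$2$ component, i.e. $f=f_{2}\colon\bar{S}^{2}V\to V$ with $|f_{2}|=-1$. Under the shift, $f_{2}$ defines a bracket $(\cdot,\cdot)\colon\g\ot\g\to\g$ of degree $0$ via $(x_{1},x_{2}):=s^{-1}f_{2}(sx_{1},sx_{2})$, as recorded in the Remark on odd Lie algebras in Section 2.1. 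Because $f_{2}$ factors through $\bar{S}^{2}V = VV$ (the graded-symmetric quotient), the map $f_{2}$ is automatically graded symmetric of odd degree, and after the degree shift this forces $(\cdot,\cdot)$ to be graded antisymmetric. So the ``binary'' hypothesis delivers a candidate Lie bracket essentially for free; the suspension bookkeeping is the only thing to be careful about here.

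\medskip

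The substantive step is the translation of $\pa\pa=0$ into the Jacobi identity. Since $\pa=\pa_{f_{2}}$ is a coderivation, $\pa^{2}=\pa_{f_{2}}\pa_{f_{2}}$ is again a coderivation (the bracket of coderivations is a coderivation), so by (\ref{cohom1}) it is determined by its corestriction to $V$, namely by the weight-$3$ component $(\pa^{2})\colon\bar{S}^{3}V\to V$. Using the explicit $m=2$, $n=3$ instance of (\ref{cohom2}) displayed just before the lemma,
$$
\partial_{f}(v_{1}v_{2}v_{3})=f(v_{1},v_{2})v_{3}+(-1)^{|v_{1}||f|}v_{1}f(v_{2},v_{3})+(-1)^{|v_{2}||f|+|v_{1}||v_{2}|}v_{2}f(v_{1},v_{3}),
$$
I would compute $f_{2}\big(\partial_{f_{2}}(v_{1}v_{2}v_{3})\big)$ by composing once more, collecting the three terms in which $f_2$ is applied to a bracket of a bracket. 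The result is a sum of three nested expressions $f_{2}(f_{2}(v_{i},v_{j}),v_{k})$ with Koszul signs, and $\pa^{2}=0$ on $\bar{S}^{3}V$ says exactly that this sum vanishes. Undoing the suspension (replacing each $v_{i}=sx_{i}$ and transporting $f_{2}$ to $(\cdot,\cdot)$) turns this vanishing into the graded Jacobi identity for $(\cdot,\cdot)$ in the form (\ref{oddloday}) or equivalently (\ref{evenloday}) for the rescaled bracket.

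\medskip

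The main obstacle I expect is purely the sign accounting: verifying that the Koszul signs produced by (\ref{cohom2}) after the double composition line up precisely with the signs in the graded Jacobi identity, and confirming that the quotient relation defining $VV$ is not only necessary (antisymmetry) but consistent with these signs throughout. Both directions of the ``if and only if'' are obtained from the same calculation read forwards and backwards: given a Lie bracket on $\g$ one sets $f_{2}$ to be its suspension and checks $\pa_{f_{2}}^{2}=0$ via the weight-$3$ identity; conversely a binary codifferential yields a bracket whose Jacobi identity is forced by $\pa^{2}=0$. Since $\Coder(\bar{S}V)$ is a Lie algebra and codifferentials are exactly its degree-$(-1)$ square-zero elements (Maurer--Cartan elements concentrated in weight $2$), no further structure needs to be checked once the weight-$3$ corestriction is shown to encode Jacobi. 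I would therefore organize the proof around the single display above and defer the sign verification to a short explicit computation.
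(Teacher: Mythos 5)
Your proposal is correct and follows essentially the same route as the paper: the paper's proof simply defines $\pa:=s(s^{-1},s^{-1})$ as the binary coderivation corresponding under $\Coder(\bar{S}V)\cong\Hom(\bar{S}V,V)$ to the shifted bracket and asserts that the Jacobi identity is equivalent to $\pa\pa=0$, which is exactly the weight-$3$ corestriction computation you outline. Your version merely makes explicit the details (antisymmetry from the symmetric quotient, $\pa^{2}$ being a coderivation determined by its corestriction) that the paper leaves implicit.
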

\begin{proof}
Let $(.,.)$ be a Lie bracket on $\g$.
Then $s(s^{-1},s^{-1})$ is an odd Lie bracket on $s\g$, which defines
a binary coderivation $\pa:=s(s^{-1},s^{-1})$ of degree $-1$.
The Jacobi identity is equivalent to $\pa\pa=0$.
\end{proof}
\indent
We recall the cofree Zinbiel coalgebra.
It is known that the tensor space $\bar{T}sV:=sV\oplus (sV)^{\ot 2}\oplus\cdots$
becomes the cofree Zinbiel coalgebra in the category of nilpotent coalgebras
(Ammar and Poncin \cite{AP}, see also \cite{U1}.)
The Zinbiel coproduct, $\Delta_{z}$, is defined by $\Delta_{z}sV:=0$ and
\begin{equation}\label{zinbcoproduct}
\Delta_{z}(sv_{1}\ot\cdots\ot sv_{n}):=\sum_{\sigma\atop 1\le i\le n-1}\e(\sigma)
(sv_{\sigma(1)}\ot\cdots\ot sv_{\sigma(i)},
sv_{\sigma(i+1)}\ot\cdots\ot sv_{\sigma(n-1)}\ot sv_{n}),
\end{equation}
where $\sigma$ is an $(i,n-1-i)$-unshuffle permutation.
The coproduct $\Delta_{z}$ satisfies the co-Zinbiel relation,
which is the dual of (\ref{odddual1}),
$$
(1\ot\Delta_{z})\Delta_{z}=(\Delta_{z}\ot 1)\Delta_{z}+(\tau\Delta_{z}\ot 1)\Delta_{z},
$$
where $\tau:\bar{T}sV\ot\bar{T}sV\to\bar{T}sV\ot\bar{T}sV$,
$\tau:c_{1}\ot c_{2}\mapsto (-1)^{|c_{1}||c_{2}|}c_{2}\ot c_{1}$
 is a transposition.
Coderivations on the Zinbiel coalgebra are defined by the same manner as (\ref{defcoder}).
The following identity also holds.
$$
\Coder(\bar{T}sV)\cong\Hom(\bar{T}sV,sV).
$$
Given $f:(sV)^{\ot m}\to sV$, the coderivation $\pa_{f}$ induced from $f$
has the same form as (\ref{cohom2}), that is,
\begin{multline}\label{cohom3}
\pa_{f}(sv_{1}\ot\cdots\ot sv_{n})=
\sum_{\sigma\atop m\le k\le n}\e(\sigma)(-1)^{|f|(|sv_{\sigma(1)}|+\cdots+|sv_{\sigma(i-1)}|)}\\
sv_{\sigma(1)}\ot\cdots\ot sv_{\sigma(i-1)}\ot
f(\overbrace{sv_{\sigma(i)},...,sv_{\sigma(k-1)},sv_{k}}^{m})\ot sv_{k+1}\ot\cdots\ot sv_{n},
\end{multline}
where $\sigma$ is the unshuffle permutation.
\begin{lemma}\label{zbar}
The space $\g(=s^{-1}V)$ is a Leibniz algebra of degree $1$
if and only if $\bar{T}ss\g(=\bar{T}sV)$ is a dg coalgebra with a binary codifferential of degree $-1$.
\end{lemma}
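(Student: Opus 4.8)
The plan is to mirror the proof of the preceding cocommutative lemma, replacing the cofree cocommutative coalgebra $\bar{S}V$ by the cofree Zinbiel coalgebra $\bar{T}sV$ and the Jacobi identity by the odd Leibniz identity (\ref{oddloday}). First I would invoke the correspondence $\Coder(\bar{T}sV)\cong\Hom(\bar{T}sV,sV)$ to identify a binary coderivation of degree $-1$ with its bilinear corestriction $f:sV\ot sV\to sV$ of degree $-1$; the coderivation is then recovered as $\pa_{f}$ via formula (\ref{cohom3}). Since $sV=ss\g$, desuspending twice turns $f$ into a bracket $[x_{1},x_{2}]:=s^{-1}s^{-1}f(ssx_{1},ssx_{2})$ on $\g$, and a degree count gives $|[.,.]|=+1$. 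This already sets up a bijection between binary degree $-1$ coderivations on $\bar{T}sV$ and degree $+1$ bilinear brackets on $\g$; note that, because the arity-$2$ part of $\bar{T}sV$ is the full tensor $(sV)^{\ot 2}$ (and not a symmetric quotient as for $\bar{S}V$), the bracket $f$ is a general, non-(anti)symmetric product, exactly as a Leibniz bracket should be. Both implications of the lemma will therefore follow from a single equivalence.

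Next I would show that $\pa_{f}\pa_{f}=0$ is equivalent to the Leibniz identity for $[.,.]$. The square $\pa_{f}^{2}=\tfrac{1}{2}[\pa_{f},\pa_{f}]$ is again a coderivation, hence vanishes if and only if its corestriction to $sV$ does. Because $\pa_{f}$ is binary it lowers arity by one, so $\pa_{f}$ kills $sV$ and $\pa_{f}^{2}$ has a nonzero corestriction to $sV$ only out of the arity-$3$ piece $(sV)^{\ot 3}$. It thus suffices to evaluate $\pa_{f}^{2}$ on a three-fold tensor $sv_{1}\ot sv_{2}\ot sv_{3}$ and project onto $sV$.

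The crucial step is this arity-$3$ computation. Applying $\pa_{f}$ once through (\ref{cohom3}) and then composing, one is driven by the Zinbiel coproduct (\ref{zinbcoproduct}), whose unshuffles always keep the last leaf $sv_{3}$ in the rightmost slot; in contrast to the fully symmetric $\Delta_{c}$, this built-in asymmetry is precisely what produces the three terms $f(sv_{1},f(sv_{2},sv_{3}))$, $f(f(sv_{1},sv_{2}),sv_{3})$ and $f(sv_{2},f(sv_{1},sv_{3}))$ in the Leibniz, rather than the cyclic Jacobi, pattern. Using the co-Zinbiel relation $(1\ot\Delta_{z})\Delta_{z}=(\Delta_{z}\ot 1)\Delta_{z}+(\tau\Delta_{z}\ot 1)\Delta_{z}$ to confirm that no other terms survive the corestriction, the condition $\pa_{f}^{2}=0$ becomes exactly the odd Leibniz identity (\ref{oddloday}) for $f$ on $sV$, which transports along $s^{-1}s^{-1}$ to the Leibniz identity of degree $+1$ on $\g$.

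I expect the main obstacle to be the sign and suspension bookkeeping rather than the shape of the argument: one must reconcile the Koszul signs $\e(\sigma)$ and the factors $(-1)^{|f|(\cdots)}$ coming from (\ref{cohom3}) with the two suspensions relating $[.,.]$ on $\g$ to $f$ on $ss\g$, and verify that the outcome is (\ref{oddloday}) on the nose and not a rescaled variant. The cleanest way to control this is to perform the entire computation for the odd bracket $f$ directly on $W:=ss\g$, where the identity has the homogeneous odd form, and to pass back to $\g$ by desuspension only at the very end — exactly as in the remark that an (odd) Leibniz identity retains its shape once all the variables are shifted.
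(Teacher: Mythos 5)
Your proposal is correct and follows essentially the same route as the paper: the paper's own proof simply defines $\pa_{Leib}:=ss[.,.](s^{-1}\ot s^{-1})(s^{-1}\ot s^{-1})$ as the coderivation induced from the doubly suspended bracket on $ss\g$ and observes that the Leibniz identity is equivalent to $\pa_{Leib}\pa_{Leib}=0$. Your write-up just makes explicit the cofreeness/corestriction argument and the arity-$3$ computation that the paper leaves implicit.
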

\begin{proof}
If $\g$ has a Leibniz bracket $[.,.]$ of degree $1$,
then $ss\g$ has a Leibniz bracket of degree $-1$,
which defines a coderivation,
$$
\pa_{Leib}:=ss[.,.](s^{-1}\ot s^{-1})(s^{-1}\ot s^{-1}).
$$
The Leibniz identity is equivalent to $\pa_{Leib}\pa_{Leib}=0$.
\end{proof}

\subsection{Cofree $\Zinb\odot\s\Com$-coalgebra}

In first we recall odd version of commutative coalgebras.
\begin{definition}\label{oddccalg}
Commutative coalgebras of degree $+1$
are by definition spaces $C$ equipped with
coproducts $\Delta_{c}$ of degree $+1$ satisfying
\begin{eqnarray*}
\tau\Delta_{c}&=&-\Delta_{c},\\
(\Delta_{c}\ot 1)\Delta_{c}&=&-(1\ot\Delta_{c})\Delta_{c},
\end{eqnarray*}
where $\tau:C\ot C\to C\ot C$,
$c_{1}\ot c_{2}\mapsto(-1)^{|c_{1}||c_{2}|}c_{2}\ot c_{1}$,
is the transposition.
\end{definition}
If $(C,\Delta_{c}^{0})$ is an even commutative coalgebra,
then $sC$ becomes an odd coalgebra, whose coproduct is
defined as
$$
\Delta_{c}:=(s\ot s)\Delta_{c}^{0}s^{-1}:sC\to sC\ot sC.
$$
\begin{definition}
$\Zinb\odot\s\Com$-coalgebras of bidegree $(0,1)$
are by definition spaces, $C$, equipped with
Zinbiel-coproducts $\Delta_{z}$ of degree $0$
and commutative coproducts $\Delta_{c}$ of degree $+1$
satisfying
$$
(\Delta_{z}\ot 1)\Delta_{c}=(1\ot \Delta_{c})\Delta_{z}+(\Delta_{c}\ot 1)\Delta_{z},
$$
which is the dual of (\ref{odddual2}).
\end{definition}
\begin{definition}
Coderivations on $\Zinb\odot\s\Com$-coalgebras
$(C,\Delta_{z},\Delta_{c})$ are
by definition endomorphisms $\partial:C\to C$ satisfying
\begin{eqnarray}
\label{DDZ}(\partial\ot 1+1\ot \partial)\Delta_{z}&=&\Delta_{z}\partial,\\
\label{DDC}(-1)^{|\partial|}(\partial\ot 1+1\ot \partial)\Delta_{c}&=&\Delta_{c}\partial.
\end{eqnarray}
Here (\ref{DDC}) is equivalent with $[\pa,\Delta_{c}]=0$.
\end{definition}
We should construct the cofree $\Zinb\odot\s\Com$-coalgebra,
before that, we set some convenient symbols.
\begin{definition}
For a given word $w_{1}...w_{n}$, we put
\begin{eqnarray*}
\unshuff_{i}(w_{1}...w_{n})&:=&
\sum_{\sigma}\e(\sigma)(w_{\sigma(1)}...w_{\sigma(i)} \ , \ w_{\sigma(i+1)}...w_{\sigma(n)}),\\
\unshuff_{i}^{(a,b)}(w_{1}...w_{n})&:=&
\sum_{\sigma}\e(\sigma)(w_{\sigma(1)}...w_{a}...w_{\sigma(i)} \ , \ w_{\sigma(i+1)}...w_{b}...w_{\sigma(n)}),
\end{eqnarray*}
where $\sigma$ is an $(i,n-i)$-unshuffle permutation
and where
$w_{a}\in\{w_{\sigma(1)},...,w_{\sigma(i)}\}$ and $w_{b}\in\{w_{\sigma(i+1)},...,w_{\sigma(n)}\}$.
\end{definition}
For example,
$$
\unshuff_{2}^{(1,3)}(w_{1}w_{2}w_{3}w_{4})=
(w_{1}w_{2},w_{3}w_{4})+(-1)^{(|w_{2}|+|w_{3}|)|w_{4}|}(w_{1}w_{4},w_{2}w_{3}).
$$
The Zinbiel coproduct $\Delta_{z}$ defined in (\ref{zinbcoproduct}) is shortly expressed as follows,
$$
\Delta_{z}(sv_{1}\ot\cdots\ot sv_{n})=
\sum_{i\ge 1}\unshuff_{i}^{(\emptyset,n)}(sv_{1}\ot\cdots\ot sv_{n}).
$$
\begin{proposition}\label{cofreezccoalgebra}
We put $C:=\bar{S}V$.
Then $\bar{T}(sC)$ or $\bar{T}s(\bar{S}V)$
becomes the cofree $\Zinb\odot\s\Com$-coalgebra
of bidegree $(0,1)$.
The commutative coproduct on $\bar{T}sC$ is defined by
\begin{multline*}
\Delta_{c}(sc_{1}\ot\cdots\ot sc_{n})=\\
=\sum_{k\ge 1}^{n-1}\sum_{i\ge 1}^{n-1}
\unshuff^{(k,n)}_{i}(-1)^{|sc_{1}|+\cdots+|sc_{k-1}|}
(sc_{1}\ot\cdots\ot\Delta_{c}(sc_{k})\ot\cdots\ot sc_{n})\\
-\tau\text{\upshape{(1st term)}}
+\Big(\sum_{i\ge 0}^{n-1}\unshuff_{i}(-1)^{|sc_{1}|+\cdots+|sc_{n-1}|}
(sc_{1}\ot\cdots\ot sc_{n-1})\Big)\ot\Delta_{c}(sc_{n}),
\end{multline*}
where $c_{i}\in C=\bar{S}V$ for each $i$,
$\tau$ is the transposition defined in Definition \ref{oddccalg}
and the last tensor product is component wise, i.e.,
if $\Delta_{c}(sc_{n})=\sum sc_{n_{1}}\ot sc_{n_{2}}$,
$$
(x_{1},x_{2})\ot\Delta_{c}(sc_{n})=\sum(-1)^{|x_{2}|(|c_{n_{1}}|+1)}
(x_{1}\ot sc_{n_{1}},x_{2}\ot sc_{n_{2}}).
$$
The Zinbiel coproduct $\Delta_{z}$ is defined by the same manner as (\ref{zinbcoproduct}).
\end{proposition}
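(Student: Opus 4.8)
The plan is to verify the statement in two stages: first check that the proposed coproducts $\Delta_z$ and $\Delta_c$ genuinely define a $\Zinb\odot\s\Com$-coalgebra structure on $\bar{T}s(\bar{S}V)$, and then establish the cofreeness (universal) property. Since $\bar{T}sV$ is already known to be the cofree Zinbiel coalgebra (Ammar--Poncin) and $\bar{S}V$ is the cofree commutative coalgebra, the content here is entirely about how these two cofree structures interlock according to the distributive law, which on the dual side is exactly relation (\ref{odddual2}).

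First I would verify the defining coalgebra axioms for the pair $(\Delta_z,\Delta_c)$. The Zinbiel coassociativity of $\Delta_z$ is inherited verbatim from the Ammar--Poncin result, so nothing new is needed there. The odd cocommutativity and co-coassociativity of $\Delta_c$ (the duals of the anticommutativity $1\cdot 2=-2\cdot 1$ and the odd associative law (\ref{odddual3})) must be checked against the explicit formula in the statement; the antisymmetrization $-\tau(\text{1st term})$ built into the definition is precisely what forces $\tau\Delta_c=-\Delta_c$, and the sign $(-1)^{|sc_1|+\cdots+|sc_{k-1}|}$ tracks the degree $+1$ of $\Delta_c$ as required by Definition \ref{oddccalg}. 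The crucial compatibility is the mixed relation $(\Delta_z\ot 1)\Delta_c=(1\ot\Delta_c)\Delta_z+(\Delta_c\ot 1)\Delta_z$, the dual of the distributive law (\ref{odddual2}); I would check this by expanding both sides in terms of $\unshuff$ symbols and matching unshuffle types, using the recursive structure of Lemma \ref{newlemma1} (which packages exactly the interaction of $*$ and $\cdot$) dualized into coalgebra language. The form of $\Delta_c$, with its special treatment of the last slot $sc_n$ and the component-wise tensor on $\Delta_c(sc_n)$, is engineered precisely so that this distributive identity closes.

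Next I would prove cofreeness, namely that for any $\Zinb\odot\s\Com$-coalgebra $C'$ and any linear map $\phi:C'\to V$, there is a unique coalgebra morphism $\Phi:C'\to\bar{T}s(\bar{S}V)$ lifting $\phi$ through the canonical projection to $V$. The natural approach is to build $\Phi$ by iterating the two coproducts: the commutative coproduct $\Delta_c$ produces the internal $\bar{S}V$ factors and the Zinbiel coproduct $\Delta_z$ concatenates them into the outer tensor word, matching the nested cofree structure $\bar{T}s(\bar{S}V)$. Uniqueness follows because the projection to the cogenerators $V$ together with the two coproducts determines all higher components inductively; existence requires checking that the formula so produced does respect both $\Delta_z$ and $\Delta_c$, which reduces to the axioms verified in the first stage. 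I would invoke the general principle already cited in the excerpt, that the cofree coalgebra over an operad is built on the corresponding cofree construction and that coderivations correspond to homomorphisms to the cogenerators, and adapt it to the composite operad.

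The hard part will be the mixed compatibility computation: getting the signs and the unshuffle bookkeeping correct in $(\Delta_z\ot 1)\Delta_c$ versus $(1\ot\Delta_c)\Delta_z+(\Delta_c\ot 1)\Delta_z$, since this is where the Koszul sign convention, the degree $+1$ of $\Delta_c$, and the asymmetric role of the final tensor slot all interact simultaneously. The combinatorial identity underlying it is the coalgebra dual of Lemma \ref{newlemma1}, so I would first state that dual identity cleanly and then derive the coproduct compatibility from it, rather than attempting a brute-force index chase. The cocommutativity and odd-coassociativity of $\Delta_c$, by contrast, I expect to be routine sign verifications once the $\tau$-antisymmetrization is accounted for.
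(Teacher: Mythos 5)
Your proposal is correct, and it hinges on the same key input as the paper: the coproduct $\Delta_{c}$ is the dual of the product formula in Lemma \ref{newlemma1}. The difference is one of where the work is placed. The paper's entire proof is a three-line sketch: it observes that the dual of a shuffle product is an unshuffle coproduct, decomposes $\Delta_{c}=\sum_{i}\Delta_{c}^{i}$ by the length of the left output factor, and identifies each $\Delta_{c}^{i}$ as the dual of the identity in Lemma \ref{newlemma1}; the coalgebra axioms, the mixed compatibility dual to (\ref{odddual2}), and the cofreeness are all inherited implicitly from the free-algebra side via this duality, since the distributive law already identifies the underlying Schur functor of the free $\Zinb\odot\s\Com$-algebra with $\bar{T}s(\bar{S}V)$. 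You instead propose to work entirely on the coalgebra side: verify $\tau\Delta_{c}=-\Delta_{c}$, the odd coassociativity, and the distributive compatibility directly from the $\unshuff$ formula, and then establish the universal property by an explicit inductive lifting argument. Your route is heavier but more self-contained and would actually substantiate the sign bookkeeping that the paper never writes down; the paper's route is shorter but transfers all verification to Lemma \ref{newlemma1} and the general free/cofree correspondence. One caution if you carry out your plan: the "dual identity" you intend to state cleanly is not merely the $i=n-1$ special case (Corollary \ref{keycoro}) but the full shuffle-product formula of Lemma \ref{newlemma1}, including the middle term with $(i\cdot n)$ in the last slot — this is exactly what produces the asymmetric treatment of $sc_{n}$ and the component-wise tensor with $\Delta_{c}(sc_{n})$ in the stated formula, so the distributive compatibility will not close without it.
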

\begin{proof}
In general, the dual of shuffle product is an unshuffle coproduct.
The coproduct $\Delta_{c}$ in the proposition is decomposed into
the sum of partial coproducts,
$$
\Delta_{c}=\sum_{i\ge 1}\Delta_{c}^{i},
$$
where $i$ is the length of the word in the left component,
namely, if $\Delta_{c}^{i}(c)=\sum c_{1}\ot c_{2}$, then
the length of $c_{1}$ is $i$.
The dual of the formula proved in Lemma \ref{newlemma1} is $\Delta_{c}^{i}$.
\end{proof}
\begin{example}\label{paLeib}
If $|v_{1}|=|v_{2}|=|v_{3}|=1$, then
\begin{multline*}
\Delta_{c}(sv_{1}v_{2}\ot sv_{3})=\\
=-(sv_{1}\ot sv_{2},sv_{3})+(sv_{2}\ot sv_{1},sv_{3})
+\tau(sv_{1}\ot sv_{2},sv_{3})-\tau(sv_{2}\ot sv_{1},sv_{3})=\\
=-(sv_{1}\ot sv_{2},sv_{3})+(sv_{2}\ot sv_{1},sv_{3})
+(sv_{3},sv_{1}\ot sv_{2})-(sv_{3},sv_{2}\ot sv_{1}),
\end{multline*}
where $\Delta_{c}sv_{3}=0$.
\begin{multline*}
\Delta_{c}(sv_{1}\ot sv_{2}v_{3})
=\big((sv_{1},\emptyset)+(\emptyset,sv_{1})\big)\ot\Delta_{c}(sv_{2}v_{3})=\\
=-(sv_{1}\ot sv_{2},sv_{3})+(sv_{1}\ot sv_{3},sv_{2})+(sv_{3},sv_{1}\ot sv_{2})
-(sv_{2},sv_{1}\ot sv_{3}).
\end{multline*}
In a similar way,
\begin{multline*}
\Delta_{c}(sv_{1}\ot sv_{2}v_{3}\ot sv_{4})=\\
-(sv_{1}\ot sv_{2}\ot sv_{3},sv_{4})+(sv_{1}\ot sv_{3}\ot sv_{2},sv_{4})
-(sv_{2}\ot sv_{3},sv_{1}\ot sv_{4})+(sv_{3}\ot sv_{2},sv_{1}\ot sv_{4})\\
+(sv_{4},sv_{1}\ot sv_{2}\ot sv_{3})-(sv_{4},sv_{1}\ot sv_{3}\ot sv_{2})
+(sv_{1}\ot sv_{4},sv_{2}\ot sv_{3})-(sv_{1}\ot sv_{4},sv_{3}\ot sv_{2},).
\end{multline*}
\end{example}

\subsection{Bar coalgebra}
Let $(\g,(.,.),[.,.])$ be a Lie-Leibniz algebra of bidegree $(0,1)$.
We consider the cofree $\Zinb\odot\s\Com$-coalgebra over $s\g$,
that is, $\bar{T}s\bar{S}s\g$.
\begin{lemma}\label{ctshts}
$\Coder\big(\bar{T}s\bar{S}s\g\big)\cong\Hom\big(\bar{T}s\bar{S}s\g,ss\g\big)$.
\end{lemma}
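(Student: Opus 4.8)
The plan is to prove the isomorphism exactly as in the two earlier cases, (\ref{cohom1}) and its Zinbiel analogue, by exhibiting mutually inverse maps between coderivations and corestrictions onto the cogenerators. By Proposition \ref{cofreezccoalgebra}, $\bar{T}s\bar{S}s\g$ is the cofree $\Zinb\odot\s\Com$-coalgebra of bidegree $(0,1)$ over $s\g$, and its space of cogenerators is the arity-one piece $ss\g$, obtained by composing the Zinbiel cogenerator projection $\bar{T}sC\to sC$ (where $C=\bar{S}s\g$, so $sC=s\bar{S}s\g$) with the shifted commutative cogenerator projection $sC=s\bar{S}s\g\to s(s\g)=ss\g$. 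Writing $\pi:\bar{T}s\bar{S}s\g\to ss\g$ for this composite, I would first define the forward map $\Phi(\pa):=\pi\c\pa$, sending a coderivation to its corestriction; by construction $\Phi(\pa)\in\Hom(\bar{T}s\bar{S}s\g,ss\g)$.

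For the inverse, given $f\in\Hom(\bar{T}s\bar{S}s\g,ss\g)$, I would build a coderivation $\pa_{f}$ in two stages matching the nested cofree structure. The first stage extends $f$ along the commutative direction: regarding $sC=s\bar{S}s\g$ as the shift of the cofree commutative coalgebra on $s\g$, I use formula (\ref{cohom2}), applied slot-wise inside the $C$-factors, to produce $\hat f:\bar{T}sC\to sC$ which is a $\Delta_{c}$-coderivation corestricting to $f$. The second stage extends $\hat f$ along the Zinbiel direction via formula (\ref{cohom3}), yielding an endomorphism $\pa_{f}$ of $\bar{T}sC$ that is a $\Delta_{z}$-coderivation with $\pi\pa_{f}=f$. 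I would then verify that $\pa_{f}$ satisfies both (\ref{DDZ}) and (\ref{DDC}) simultaneously.

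Injectivity of $\Phi$ follows from conilpotency: since $\Delta_{z}$ and $\Delta_{c}$ both strictly lower word length, iterating either coproduct together with the coderivation identities (\ref{DDZ}) and (\ref{DDC}) reconstructs $\pa$ on every graded component from $\pi\c\pa$ alone, so two coderivations with equal corestriction coincide. Combined with the inverse construction, this yields the claimed bijection $\Phi$.

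The main obstacle is the compatibility check in the inverse construction, namely confirming that the two extension procedures do not conflict: the $\Delta_{c}$-extension of the first stage must remain a $\Delta_{z}$-coderivation after the second stage, and conversely. This hinges on the dual distributive law $(\Delta_{z}\ot 1)\Delta_{c}=(1\ot\Delta_{c})\Delta_{z}+(\Delta_{c}\ot 1)\Delta_{z}$, which is precisely the co-version of the interaction recorded in Lemmas \ref{newlemma1} and \ref{zcder} describing how a (co)derivation threads through the interleaved Zinbiel and commutative structures. I expect the explicit commutative coproduct formula of Proposition \ref{cofreezccoalgebra}, illustrated in Example \ref{paLeib}, to supply the bookkeeping needed to push this verification through.
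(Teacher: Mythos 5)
Your proposal is correct and follows essentially the same route as the paper: there the lemma is stated without proof as an instance of the general principle (recalled just before (\ref{cohom1})) that coderivations on a cofree coalgebra correspond to maps onto the cogenerators, the substantive input being the cofreeness of $\bar{T}s\bar{S}s\g$ established in Proposition \ref{cofreezccoalgebra}, and your corestriction/extension argument is just the standard proof of that principle specialized to this case. The compatibility check you flag --- that the extended map satisfies both (\ref{DDZ}) and (\ref{DDC}) simultaneously --- is indeed the only real work, and it is exactly what the paper's explicit description of the isomorphism in Proposition \ref{gformofcoder} encodes, so your sketch is at the same level of completeness as the original.
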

Let us determine the isomorphism in above lemma.
The space of homomorphisms is decomposed as follows.
\begin{equation*}
\Hom\big(\bar{T}s\bar{S}s\g,ss\g\big)\cong
\bigoplus_{(a_{1},...,a_{n})}
\Big(s\bar{S}^{a_{1}}s\g\ot\cdots\ot s\bar{S}^{a_{n}}s\g\to ss\g\Big).
\end{equation*}
Hence it suffices to consider maps of $s\bar{S}^{a_{1}}s\g\ot\cdots\ot s\bar{S}^{a_{n}}s\g\to ss\g$.
We call the $n$-tuple $(a_{1},...,a_{n})$ an \textbf{arity}
and call the sum $\sum_{i\ge 1}^{n}a_{i}$ the \textbf{total arity}.
If $f$ is a multiplication on $\g$ of arity $(a_{1},...,a_{n})$, i.e.,
$f:\bar{S}^{a_{1}}\g\ot\cdots\ot\bar{S}^{a_{n}}\g\to\g$,
then the coderivation induced from $f$ has the following form,
\begin{equation}\label{defpaf}
\pa_{f}:=ssf\big(s^{-1\ot a_{1}}\ot\cdots\ot s^{-1\ot a_{n}}\big)
(\overbrace{s^{-1}\ot\cdots\ot s^{-1}}^{n}).
\end{equation}
The degree of $\pa_{f}$ is $2+|f|-\sum_{i\ge 1}^{n}a_{i}-n$.\\
\indent
In first we prove Lemma \ref{frelation} below.
\begin{definition}[relation]
Let $(a_{i})$ and $(b_{i})$ be two $n$-tuples of natural numbers.
The two tuples are said to be related at $k$, if
\begin{eqnarray*}
(a_{1},...,a_{k-1})&=&(b_{1},...,b_{k-1}),\\
a_{k}+a_{k+1}&=&b_{k},\\
(a_{k+2},...,a_{n})&=&(b_{k+1},...,b_{n-1}),
\end{eqnarray*}
where $k\in\{1,...,n\}$ and by definition $a_{n+1}\ge 0$.
The relation is denoted by $(a_{i})\sim_{k}(b_{i})$.
\end{definition}
When $k=n$, the defining condition of $(a_{i})\sim_{n}(b_{i})$
is $(a_{1},...,a_{n-1})=(b_{1},...,b_{n-1})$ and $a_{n}\le b_{n}$.
For example $(1,2,3,4)\sim_{2}(1,5,4,1)$
and $(1,2,3,4)\sim_{4}(1,2,3,5)$.
\begin{lemma}\label{frelation}
Suppose that the arity of $f$ is $(a_{1},...,a_{n})$.
If $\pa_{f}$ defined in (\ref{defpaf}) is non trivial on
$s\bar{S}^{b_{1}}s\g\ot\cdots\ot s\bar{S}^{b_{n}}s\g$,
then $(a_{i})\sim_{k}(b_{i})$ for some $k$.
Namely, if the two tuples are non related, then $\pa_{f}$ is trivial automatically.
\end{lemma}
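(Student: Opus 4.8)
The plan is to compute $\pa_f$ explicitly on an arbitrary homogeneous component $s\bar{S}^{b_1}s\g\ot\cdots\ot s\bar{S}^{b_m}s\g$ and read off exactly when the result is nonzero. By Lemma \ref{ctshts} and the cofreeness established in Proposition \ref{cofreezccoalgebra}, the coderivation $\pa_f$ is the unique coextension of the corestriction (\ref{defpaf}) that is compatible with \emph{both} coproducts, i.e. satisfies (\ref{DDZ}) and (\ref{DDC}). First I would write this coextension as the composite of the Zinbiel coderivation formula (\ref{cohom3}), which governs how $f$ is distributed over the outer tensor word, with the commutative coderivation rule (\ref{cohom2}), which governs how $f$ reaches into an individual cluster $\bar{S}^{b_j}s\g$.

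The decisive observation is that the purely Zinbiel coextension (\ref{cohom3}) is \emph{not} yet a coderivation for $\Delta_c$: it would let $f$ read $n$ outer slots while leaving other slots as context, whereas such context terms cannot survive in a genuine $\Delta_c$-coderivation. I would make this precise by imposing (\ref{DDC}) with the explicit $\Delta_c$ of Proposition \ref{cofreezccoalgebra}. Because that coproduct shuffles all of the non-last slots together (the $\unshuff$ over $sc_1\ot\cdots\ot sc_{n-1}$) and treats $sc_n$ as distinguished, any term of $\pa_f$ that left a leading or interior slot untouched would violate $(-1)^{|\pa_f|}(\pa_f\ot 1+1\ot\pa_f)\Delta_c=\Delta_c\pa_f$ and must therefore cancel. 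The conclusion of this step is that $f$ is forced to consume every outer slot of the word.

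With no leftover slots, the matching of arities becomes rigid, and only the cluster sizes remain to be reconciled. The commutative coproduct splits at most one cluster in each term, and there are exactly two ways this can happen. Either an interior cluster $c_k$ is split through the factor $\Delta_c(sc_k)$ in the first sum of Proposition \ref{cofreezccoalgebra}, producing two consecutive inputs for $f$ of sizes $a_k$ and $a_{k+1}$; then the word has $n-1$ slots with $b_k=a_k+a_{k+1}$, $b_j=a_j$ for $j<k$, and $b_j=a_{j+1}$ for $j>k$, which is precisely $(a_i)\sim_k(b_i)$ for $k<n$. Or the distinguished last cluster $c_n$ is split through the final term, so that $f$ absorbs $a_n$ of its $b_n$ elements and the remaining $b_n-a_n\ge 0$ stay behind; then the word has $n$ slots with $b_j=a_j$ for $j<n$ and $b_n\ge a_n$, which is $(a_i)\sim_n(b_i)$. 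No term of $\pa_f$ splits two clusters or leaves a whole slot untouched, so outside these two shapes every contribution vanishes and $\pa_f$ is trivial, as claimed.

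I expect the genuine difficulty to lie in the middle step: verifying, against the intricate coproduct of Proposition \ref{cofreezccoalgebra} --- with its shuffle terms, its antisymmetrization by $\tau$, and its component-wise last-factor term --- that $\Delta_c$-compatibility really does forbid context slots and confines cluster-splitting to a single place. Since only nontriviality is at stake, I would suppress all Koszul signs throughout and track solely which slots, and how many cluster-elements, are fed into $f$; the sign bookkeeping of (\ref{cohom2}), (\ref{cohom3}) and Proposition \ref{cofreezccoalgebra} is then irrelevant to the statement.
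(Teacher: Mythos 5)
Your route is genuinely different from the paper's: the paper deduces the lemma in one line by dualizing the derivation formula of Lemma \ref{zcder} on the free $\Zinb\odot\s\Com$-algebra, whereas you try to read the support of $\pa_{f}$ directly off the coproducts of Proposition \ref{cofreezccoalgebra}. That strategy is admissible in principle, but your decisive middle step is wrong in two places. First, the principle you extract from (\ref{DDC}) --- that a term of $\pa_{f}$ leaving a slot untouched ``cannot survive'', so that $f$ must consume every outer slot --- is not what the coderivation conditions say: (\ref{DDZ}) and (\ref{DDC}) are co-Leibniz rules, and they \emph{generate} context terms rather than forbid them (see equation (\ref{panj}) in the Appendix, where $\pa_{n}^{(j)}$ leaves many leading and interior tensor factors untouched). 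Second, and more seriously, your case analysis for $k<n$ contradicts both the definition of $\sim_{k}$ and Proposition \ref{gformofcoder}. You conclude that the relevant component is the $(n-1)$-slot word $(a_{1},\dots,a_{k}+a_{k+1},\dots,a_{n})$; but $\sim_{k}$ relates two $n$-tuples with $b_{n}$ unconstrained, and the correct picture is an $n$-slot word whose last cluster $c_{n+1}$, of arbitrary size $b_{n}$, is \emph{not} an input of $f$ at all: it survives and is commutatively multiplied into the output, $s\big(sf(\bar{c}_{1},\dots,\bar{c}_{n})\cdot c_{n+1}\big)$. On your $(n-1)$-slot components $\pa_{f}$ is in fact trivial, because there the total letter count equals $\sum a_{i}$, so $\pa_{f}$ coincides with its corestriction, which is supported only on the exact arity $(a_{1},\dots,a_{n})$. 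Thus the slogan ``every slot is consumed and at most one cluster splits'' is false as stated, and the support you derive from it is not the one asserted by the lemma.

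The repair is essentially the paper's argument: compute on the dual (free-algebra) side. Lemma \ref{zcder} shows that a derivation applied to a single commutative cluster of length $N$ lands exactly in components of shape $(1,\dots,1,2,1,\dots,1,N-1)$ --- one doubled interior slot plus a free trailing cluster --- and dualizing this (together with its extension to general arities via Lemma \ref{newlemma1}) yields precisely the relation $\sim_{k}$, trailing slot included. If you insist on working on the coalgebra side, the feature you must keep track of is the last summand of $\Delta_{c}$ in Proposition \ref{cofreezccoalgebra}, the term $\big(\sum_{i}\unshuff_{i}(\dots)\big)\ot\Delta_{c}(sc_{n})$, which privileges the final cluster; that term is exactly where the unconstrained $b_{n}$ comes from, and it is the piece your sketch suppresses.
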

\begin{proof}
The defining relation of coderivations is the dual of the one of derivations.
Therefore, the lemma is followed from Lemma \ref{zcder}.
\end{proof}
We denote the basic element of $\bar{S}^{a_{i}}s\g$
by $c_{i}:=s1_{i}s2_{i}\cdots sa_{i}$ for each $i$.
Here the degrees of labels are zero
$|1|=|2|=\cdots=|a_{i}|=0$ and the one of $c_{i}$ is $|c_{i}|=a_{i}$.
\begin{lemma}
Under the same assumption as above,
\begin{equation*}
\pa_{f}(sc_{1}\ot\cdots\ot sc_{n})=(-1)^{\sum_{i\ge 1}^{n-1}\big((a_{i}+1)(n-i)
+a_{i+1}\sum_{j\ge 1}^{i}a_{j}\big)+\sum_{i\ge 1}^{n}\frac{a_{i}(a_{i}-1)}{2}}
ssf(\bar{c}_{1},...,\bar{c}_{n}),
\end{equation*}
where $\bar{c}_{i}:=1_{i}2_{i}3_{i}\cdots a_{i}\in\bar{S}^{a_{i}}\g$.
\end{lemma}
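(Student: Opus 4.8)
The plan is to evaluate the composite (\ref{defpaf}) directly on $sc_{1}\ot\cdots\ot sc_{n}$, applying its three factors from right to left and collecting the Koszul sign produced at each stage. First I would record the degrees needed for the bookkeeping: since every label has degree $0$ we have $|sj_{i}|=1$, hence $|c_{i}|=a_{i}$ and $|sc_{i}|=a_{i}+1$, while $|s^{-1}|=-1$ and $|s^{-1\ot a_{i}}|=-a_{i}$. Because the total arity of the input equals the arity $(a_{1},\dots,a_{n})$ of $f$, the coderivation contracts the whole element in one step, so evaluating $\pa_{f}$ reduces to evaluating this single composite, with no room for partial-insertion terms of larger length (this is consistent with Lemma \ref{frelation}).

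The first (rightmost) factor is the outer stripping $(s^{-1}\ot\cdots\ot s^{-1})$ of the $\bar{T}s$-layer, which sends $sc_{1}\ot\cdots\ot sc_{n}$ to $c_{1}\ot\cdots\ot c_{n}$. By the Koszul rule for a tensor product of maps, moving the copy of $s^{-1}$ sitting in slot $j$ past the element $sc_{i}$ in slot $i<j$ produces $(-1)^{|s^{-1}||sc_{i}|}=(-1)^{a_{i}+1}$, so the accumulated sign is $(-1)^{\sum_{i<j}(a_{i}+1)}=(-1)^{\sum_{i=1}^{n-1}(a_{i}+1)(n-i)}$, which is exactly the first summand of the exponent in the statement.

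Next I would apply the middle factor $(s^{-1\ot a_{1}}\ot\cdots\ot s^{-1\ot a_{n}})$, which strips the inner $\bar{S}s$-layer and turns each $c_{i}=s1_{i}\cdots sa_{i}$ into $\bar{c}_{i}=1_{i}\cdots a_{i}$. This contributes two kinds of sign. The block-level sign comes from commuting the operator $s^{-1\ot a_{j}}$ (of degree $-a_{j}$) past the element $c_{i}$ (of degree $a_{i}$) for $i<j$, giving $(-1)^{\sum_{i<j}(-a_{j})a_{i}}=(-1)^{\sum_{i<j}a_{i}a_{j}}$; the re-indexing $m=i+1$ then shows $\sum_{i<j}a_{i}a_{j}=\sum_{i=1}^{n-1}a_{i+1}\sum_{j=1}^{i}a_{j}$, the second summand. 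The within-block sign is the décalage sign: inside block $i$, commuting the $a_{i}$ copies of $s^{-1}$ past the preceding factors $s1_{i},\dots,sa_{i}$ (each of degree $+1$) yields $(-1)^{\binom{a_{i}}{2}}=(-1)^{a_{i}(a_{i}-1)/2}$, and summing over $i$ gives the third summand. Here I would check that the décalage is well defined on the symmetric quotient: $\bar{S}^{a_{i}}s\g$ consists of odd elements, and the chosen representative $s1_{i}\cdots sa_{i}$ matches the representative $\bar{c}_{i}=1_{i}\cdots a_{i}$ of $\bar{S}^{a_{i}}\g$ slotwise.

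Finally, the leftmost factor $ssf$ is applied to the single tensor $\bar{c}_{1}\ot\cdots\ot\bar{c}_{n}$ and therefore contributes no further Koszul sign, producing $ssf(\bar{c}_{1},\dots,\bar{c}_{n})$. Multiplying the three signs gives precisely the exponent claimed. I expect the only delicate point to be the consistent bookkeeping of Koszul signs across the two nested layers — the outer tensor ($\bar{T}$) layer and the inner symmetric ($\bar{S}$) layer — together with verifying that the décalage sign descends to the symmetric quotient; the combinatorial re-indexing of $\sum_{i<j}a_{i}a_{j}$ and the verification that $f$ and $ss$ add no sign are routine.
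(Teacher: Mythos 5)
Your computation is correct: evaluating the composite (\ref{defpaf}) from right to left, the outer stripping $(s^{-1})^{\ot n}$ yields $(-1)^{\sum_{i=1}^{n-1}(a_{i}+1)(n-i)}$, the block-level commutations of $s^{-1\ot a_{j}}$ past $c_{i}$ yield $(-1)^{\sum_{i<j}a_{i}a_{j}}$ which re-indexes to the second summand, and the within-block d\'ecalage yields $(-1)^{\sum_{i}a_{i}(a_{i}-1)/2}$; this also reproduces the paper's stated special case $(-1)^{n(n-1)/2}$ when all $a_{i}=1$. The paper states this lemma without proof, so there is no argument to compare against, but your direct Koszul-sign bookkeeping (including the care about fixing matching representatives of $c_{i}$ and $\bar{c}_{i}$) is exactly the routine verification the author omits.
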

Suppose that $(a_{i})\sim_{k}(b_{i})$ and take a basic element,
$sc_{1}\ot\cdots\ot sc_{k-1}\ot sc^{\p}_{k}\ot sc_{k+2}\ot\cdots\ot sc_{n+1}
\in s\bar{S}^{b_{1}}s\g\ot\cdots\ot s\bar{S}^{b_{n}}s\g$, where
$$
\Delta_{c}^{a_{k}}sc^{\p}_{k}
=\sum\e(\sigma)(-1)^{|a_{k}|}sc_{k}\ot sc_{k+1},
$$
and where $\Delta_{c}^{a_{k}}$ is the partial coproduct considered
in the proof of Proposition \ref{cofreezccoalgebra}.
The isomorphism in Lemma \ref{ctshts} is completely determined by
\begin{proposition}\label{gformofcoder}
\begin{multline*}
\pa_{f}(sc_{1}\ot\cdots\ot sc_{k-1}\ot sc^{\p}_{k}\ot sc_{k+2}\ot\cdots\ot sc_{n+1})=\\
=\sum\e(\sigma)(-1)^{\sum_{i\ge 1}^{n-1}\big((a_{i}+1)(n-i)
+a_{i+1}\sum_{j\ge 1}^{i}a_{j}\big)+\sum_{i\ge 1}^{n}\frac{a_{i}(a_{i}-1)}{2}+(\sum_{i\ge k+1}^{n}a_{i})+n-k}\\
s\big(sf(\bar{c}_{1},...,\bar{c}_{k},\bar{c}_{k+1},...,\bar{c}_{n})\cdot c_{n+1}\big).
\end{multline*}
\end{proposition}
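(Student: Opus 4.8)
The plan is to read the formula off the universal description of coderivations on the cofree coalgebra $\bar{T}s\bar{S}s\g$ supplied by Proposition \ref{cofreezccoalgebra}, and then to collapse the resulting sum to the single surviving family of terms using the relation $\sim_k$. By Lemma \ref{ctshts} the map $\pa_f$ is the unique coderivation whose corestriction to the cogenerators $ss\g$ is the multiplication $f$ of (\ref{defpaf}); consequently it suffices to reconstruct $\pa_f$ directly from the two coproducts $\Delta_z$ and $\Delta_c$. This is the exact analogue, for the $\Zinb\odot\s\Com$-coalgebra, of the general coextension formula (\ref{cohom2})--(\ref{cohom3}) for cofree operad-coalgebras, so conceptually the statement is forced; the content is in identifying which term survives and in pinning down its sign.

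First I would invoke Lemma \ref{frelation}: $\pa_f$ is nontrivial on a component $s\bar{S}^{b_1}s\g\ot\cdots\ot s\bar{S}^{b_n}s\g$ only when $(a_i)\sim_k(b_i)$, so it is enough to evaluate it on the element $sc_1\ot\cdots\ot sc_{k-1}\ot sc^{\p}_{k}\ot sc_{k+2}\ot\cdots\ot sc_{n+1}$, in which the excess symmetric weight is concentrated in the single slot $c^{\p}_{k}$ of arity $b_k=a_k+a_{k+1}$. The coextension then proceeds by splitting precisely this slot: the partial commutative coproduct $\Delta_c^{a_k}$ of Proposition \ref{cofreezccoalgebra} produces $\sum\e(\sigma)(-1)^{|a_k|}sc_k\ot sc_{k+1}$, after which the tensor word $sc_1\ot\cdots\ot sc_{n+1}$ carries exact arity $(a_1,\ldots,a_n)$ in its first $n$ letters and $f$ may be applied. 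Here the right-handed bias of the Zinbiel coproduct (\ref{zinbcoproduct}), which always retains the final tensor letter in the right-hand component, guarantees that $sc_{n+1}$ is never fed into $f$; instead it is recombined with the output through the commutative product $\cdot$, yielding exactly the clause $\cdots\cdot c_{n+1}$ in the statement. Applying the preceding matching-arity Lemma to the now-exact word $sc_1\ot\cdots\ot sc_n$ gives $ssf(\bar{c}_1,\ldots,\bar{c}_n)$, and the distributive axiom relating $\Delta_z$ and $\Delta_c$ (Proposition \ref{cofreezccoalgebra}) assembles the final $s\big(sf(\bar{c}_1,\ldots,\bar{c}_n)\cdot c_{n+1}\big)$.

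The substance, and the step I expect to be the genuine obstacle, is the sign. I would derive it as the linear dual of the derivation formula of Lemma \ref{zcder}, whose reassembly of commutative products via $\Delta_c^{a_k}$ is itself dual to the product identity of Lemma \ref{newlemma1}. The exponent then separates into three contributions: the matching-arity sign of the preceding Lemma, namely $\sum_{i}\big((a_i+1)(n-i)+a_{i+1}\sum_{j}a_j\big)+\sum_i\tfrac{a_i(a_i-1)}{2}$; the Koszul sign $\e(\sigma)$ attached to the unshuffle realizing $\Delta_c^{a_k}$; and the correction $(\sum_{i\ge k+1}^{n}a_i)+n-k$ arising from commuting the odd map $f$ past the factors lying to the right of the split slot $k$ together with the right-handed bias of $\Delta_z$. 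Confirming that these combine to the displayed exponent is a careful but mechanical Koszul-sign computation, which I would verify first in the smallest cases, in particular against the explicit $\Delta_c$ computations of Example \ref{paLeib}, and then establish in general by induction on $n-k$.
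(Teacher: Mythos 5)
Your proposal is correct and follows exactly the route the paper sets up: the paper in fact states Proposition \ref{gformofcoder} with no written proof at all, relying on the immediately preceding material (cofreeness via Lemma \ref{ctshts}, the arity restriction of Lemma \ref{frelation}, the partial coproduct $\Delta_{c}^{a_{k}}$ from Proposition \ref{cofreezccoalgebra}, and the exact-arity sign lemma), and your sketch reconstructs precisely that chain, with the sign deferred to a mechanical Koszul computation just as the paper implicitly does.
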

If $a_{1}=\cdots=a_{n}=1$,
then the sign in above proposition is $\e(\sigma)(-1)^{\frac{n(n-1)}{2}}$.\\
\indent
Now we consider the coderivations induced from the Lie-Leibniz brackets.
Let $(.,.)$ be a Lie bracket on $\g$ of degree $0$
and let $[.,.]$ a Leibniz bracket on $\g$ of degrees $+1$.
The coderivations induced from $(.,.)$ and $[.,.]$ are respectively
\begin{eqnarray*}
\pa_{Lie}&:=&ss(.,.)(s^{-1}\ot s^{-1})s^{-1},\\
\pa_{Leib}&:=&ss[.,.](s^{-1}\ot s^{-1})(s^{-1}\ot s^{-1}).
\end{eqnarray*}
The degrees of $\pa_{Lie}$ and $\pa_{Leib}$ are both $-1$.
We obtain
\begin{proposition}[bar complex]
$\g$ is a Lie-Leibniz algebra of bidegree $(0,1)$ if and only if
$\pa_{LL}:=\pa_{Lie}+\pa_{Leib}$ is a codifferential of degree $-1$
on $\bar{T}s\bar{S}s\g$.
\end{proposition}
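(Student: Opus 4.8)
The plan is to turn the phrase ``$\pa_{LL}$ is a codifferential of degree $-1$'' into the single equation $\pa_{LL}^{2}=0$ and then decode that equation as the conjunction of the four quadratic relations (\ref{nla})--(\ref{nld}). Since $\pa_{Lie}$ and $\pa_{Leib}$ both have degree $-1$ (as recorded just before the statement), the sum $\pa_{LL}=\pa_{Lie}+\pa_{Leib}$ is automatically a coderivation of degree $-1$ on $\bar{T}s\bar{S}s\g$; the only content left is that it squares to zero. Because $\pa_{LL}$ is odd, $\pa_{LL}^{2}=\tfrac12[\pa_{LL},\pa_{LL}]$ is again a coderivation, so by the isomorphism $\Coder(\bar{T}s\bar{S}s\g)\cong\Hom(\bar{T}s\bar{S}s\g,ss\g)$ of Lemma \ref{ctshts} it vanishes if and only if its corestriction $\pi\circ(-)$ onto the cogenerator space $ss\g$ vanishes, where $\pi\colon\bar{T}s\bar{S}s\g\to ss\g$ is the canonical projection. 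Hence the whole proposition reduces to showing that $\pi\pa_{LL}^{2}=0$ is equivalent to $\g$ being a Lie-Leibniz algebra.

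First I would locate where $\pi\pa_{LL}^{2}$ can be nonzero. Both $\pa_{Lie}$ (of arity $(2)$) and $\pa_{Leib}$ (of arity $(1,1)$) are quadratic, so a composite of two of them can land in the arity-$1$ space $ss\g$ only on the summands of \emph{total arity} $3$. The arity bookkeeping of Lemma \ref{frelation} then isolates, on each such source, exactly which of the four composites $\pa_{Lie}\pa_{Lie}$, $\pa_{Lie}\pa_{Leib}$, $\pa_{Leib}\pa_{Lie}$, $\pa_{Leib}\pa_{Leib}$ can be nonzero, cutting the total-arity-$3$ part of $\pi\pa_{LL}^{2}$ into four mutually exclusive pieces indexed by the shape of the source: the single symmetric factor $s\bar{S}^{3}s\g$, the three singleton factors $s\bar{S}^{1}s\g\ot s\bar{S}^{1}s\g\ot s\bar{S}^{1}s\g$, and the two mixed shapes $s\bar{S}^{1}s\g\ot s\bar{S}^{2}s\g$ and $s\bar{S}^{2}s\g\ot s\bar{S}^{1}s\g$.

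I would then evaluate each piece using the explicit coproducts of Proposition \ref{cofreezccoalgebra} (with the sample computations of Example \ref{paLeib} as a guide) and the general coderivation formula of Proposition \ref{gformofcoder}. On $s\bar{S}^{3}s\g$ only $\pa_{Lie}\pa_{Lie}$ survives, and after applying $\pi$ it reproduces the Jacobi combination $(1,(2,3))-((1,2),3)-(2,(1,3))$, so its vanishing is exactly (\ref{nld}). On the three-singleton source only $\pa_{Leib}\pa_{Leib}$ survives, reproducing the odd Leibniz combination $[1,[2,3]]+[[1,2],3]+[2,[1,3]]$, i.e. (\ref{nla}). On the mixed sources the cross terms $\pa_{Lie}\pa_{Leib}+\pa_{Leib}\pa_{Lie}$ survive, and $s\bar{S}^{1}s\g\ot s\bar{S}^{2}s\g$ reproduces $[1,(2,3)]-([1,2],3)-(2,[1,3])$, i.e. (\ref{nlb}), while $s\bar{S}^{2}s\g\ot s\bar{S}^{1}s\g$ reproduces $[(1,2),3]-([1,2],3)+([2,1],3)$, i.e. (\ref{nlc}). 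Consequently $\pi\pa_{LL}^{2}=0$ on all total-arity-$3$ summands if and only if all four relations hold, which is precisely the statement that $\g$ is a Lie-Leibniz algebra; this settles both implications simultaneously.

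The main obstacle I anticipate is purely the sign bookkeeping. The coderivation formula of Proposition \ref{gformofcoder} carries the elaborate prefactor built from $\sum\big((a_{i}+1)(n-i)+a_{i+1}\sum_{j}a_{j}\big)$ and the triangular terms $\tfrac{a_{i}(a_{i}-1)}{2}$, while the commutative coproduct $\Delta_{c}$ injects the transposition terms $\tau$ seen in Example \ref{paLeib}; one must check that on each of the four source shapes these signs assemble into exactly the coefficients appearing in (\ref{nla})--(\ref{nld}) and not some twisted variant. A secondary point to verify with care is the exhaustiveness claimed in the second step, namely that Lemma \ref{frelation} genuinely forbids any further cross contribution (for instance a mixed composite landing on the symmetric source $s\bar{S}^{3}s\g$ or on the three-singleton source), since it is this vanishing that guarantees the four relations are recovered as independent conditions, each exactly once.
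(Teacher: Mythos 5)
Your proposal is correct and follows essentially the same route as the paper: the paper also reduces the statement to $\pa_{LL}^{2}=0$, splits it by arity into $\pa_{Lie}^{2}$ (Jacobi, handled by the earlier lemma on $\bar{S}s\g$), $\pa_{Leib}^{2}$ (odd Leibniz identity, Lemma \ref{zbar}), and the cross term $[\pa_{Lie},\pa_{Leib}]$, which it evaluates explicitly on the two mixed sources $s\bar{S}^{2}s\g\ot ss\g$ and $ss\g\ot s\bar{S}^{2}s\g$ to recover (\ref{nlc}) and (\ref{nlb}). Your treatment is somewhat more explicit about the corestriction-to-cogenerators step and the exhaustiveness of the arity decomposition, but the substance is identical.
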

As an example of computation, we check that $[\pa_{Lie},\pa_{Leib}]=0$
is equivalent to the distributive law.
In the following we sometimes use the $\mathbb{Z}_{2}$-grading simply,
for instance, $ss=s^{-1}s^{-1}=id$.
Let $s(s1s2)\ot ss3$ be the basic element of $s\bar{S}^{2}s\g\ot ss\g$.
We obtain
\begin{eqnarray}
\label{test1}\pa_{Lie}(s(s1s2)\ot ss3)&=&-ss(1,2)\ot ss3\cong-(1,2)\ot 3,\\
\label{test2}\pa_{Leib}(s(s1s2)\ot ss3)&=&-s(s[1,2]s3)+s(s[2,1]s3).
\end{eqnarray}
The first identity is obvious
and the second one is followed from Proposition \ref{gformofcoder}.
Applying $\pa_{Leib}$ and $\pa_{Lie}$ on (\ref{test1}) and (\ref{test2}),
\begin{eqnarray*}
\pa_{Leib}\pa_{Lie}(s(s1s2)\ot ss3)&=&[(1,2),3],\\
\pa_{Lie}\pa_{Leib}(s(s1s2)\ot ss3)&=&-([1,2],3)+([2,1],3),
\end{eqnarray*}
which yields
$$
[\pa_{Leib},\pa_{Lie}](s(s1s2)\ot ss3)=[(1,2),3]-([1,2],3)+([2,1],3).
$$
In a similar way, one can prove
$$
[\pa_{Leib},\pa_{Lie}](ss1\ot s(s2s3))=[1,(2,3)]-([1,2],3)-(2,[1,3]),
$$
where $ss1\ot s(s2s3)$ is the basic element of $ss\g\ot s\bar{S}^{2}s\g$.
\begin{remark}[Koszul dual cooperad]
The Koszul dual cooperad of $\LL$, which is usually denoted by $\LL^{\bot}$,
is defined as a functor such that
$$
\LL^{\bot}(\g)=s^{-2}\big(\bar{T}s\bar{S}s\g\big).
$$
This coalgebra $\LL^{\bot}(\g)$ has the universal allow
$\LL^{\bot}(\g)\to\g$.
\end{remark}

\section{$\LL_{\infty}$-algebras}

Since the operad $\LL$ is Koszul, the notion of
$\LL_{\infty}$-operad is well-defined
as the quasi-free resolution over $\LL$ (\cite{GK}).
In this section we will study two subclasses of $\LL_{\infty}$-algebras.
\begin{definition}
A graded space $\g$ is called an $\LL_{\infty}$-algebra or strong homotopy Lie-Leibniz algebra,
if there exists a codifferential, $\pa_{shLL}$, of degree $-1$ on $\bar{T}s\bar{S}s\g$.
\end{definition}
The codifferential $\pa_{shLL}$ is expressed as a sum of coderivations, $\pa_{(a_{1},...,a_{n})}$, of arity $(a_{1},...,a_{n})$.
The coderivation $\pa_{(a_{1},...,a_{n})}$ is induced from a higher homotopy defined on $\g$,
$$
l^{(a_{1},...,a_{n})}:\bar{S}^{a_{1}}\g\ot\cdots\ot\bar{S}^{a_{n}}\g\to \g.
$$
The degree of $l_{n}^{(a_{1},...,a_{n})}$ is $n+\sum_{i}a_{i}-3$ (recall (\ref{defpaf}).)
\subsection{Sh Leibniz algebra with invariant 2-forms}

In first we recall the notion of strong homotopy (sh, for short) Leibniz algebra (\cite{AP}).
\begin{definition}
Let $\g$ be a graded space and let $D_{1},D_{2},...$
be a system of coderivations defined on the Zinbiel coalgebra $\bar{T}\g$.
Here $D_{n}:\g^{\ot n}\to\g$ and $|D_{n}|:=2n-3$ for each $n$.
If $D_{shLeib}:=\sum D_{i}$ is an inhomogeneous codifferential on $\bar{T}\g$,
then $(\g,D_{1},D_{2},...)$ is called an \textbf{odd} sh Leibniz algebra.
\end{definition}
\indent
The coderivation associated with $D_{n}$ is defined as follows.
$$
\ti{\pa}_{n}:=ssD_{n}(s^{-1})^{\ot n}(s^{-1})^{\ot n}.
$$
The degree of $\ti{\pa}_{n}$ is $-1$, because $|D_{n}|=2n-3$.
In the category of $\mathbb{Z}_{2}$-graded spaces,
$\ti{\pa}_{n}\cong (-1)^{\frac{n(n-1)}{2}}D_{n}$.
Sh Leibniz algebras are special examples of sh Lie-Leibniz algebras, that is,
$D_{shLeib}D_{shLeib}=0$ if and only if $\pa_{shLeib}\pa_{shLeib}=0$, where
$$
\pa_{shLeib}:=\ti{\pa}_{1}+\ti{\pa}_{2}+\cdots
$$
which lives in $\bar{T}s\bar{S}s\g$.\\
\indent
We introduce a new type of strong homotopy algebra.
\begin{definition}
Let $(\g,\pa_{shLeib},(.,.))$ be an odd sh Leibniz algebra
equipped with a Lie bracket of degree $0$.
The space $\g$ is called an \textbf{invariant sh Leibniz algebra},
if $\pa_{Lie}+\pa_{shLeib}$ is a codifferential on $\bar{T}s\bar{S}s\g$.
\end{definition}
\begin{lemma}\label{cocyclecondition}
$[\pa_{Lie},\ti{\pa}_{n}]=0$ if and only if
$$
D_{n}\big(1,...,(k,k+1),...,n+1\big)=
\big(D_{n}(1,...,n),n+1\big)-\big(D_{n}(1,...,k+1,k,...,n),n+1\big),
$$
for each $1\le k\le n$.
\end{lemma}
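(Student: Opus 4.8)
The plan is to reduce the statement to a single corestriction computation by exploiting cofreeness. Both $\pa_{Lie}$ and $\ti{\pa}_n$ are coderivations of degree $-1$ on the cofree $\Zinb\odot\s\Com$-coalgebra $\bar{T}s\bar{S}s\g$, so their graded commutator
$$
[\pa_{Lie},\ti{\pa}_n]=\pa_{Lie}\ti{\pa}_n+\ti{\pa}_n\pa_{Lie}
$$
is again a coderivation. By Lemma \ref{ctshts} a coderivation is completely determined by its projection onto the cogenerators $ss\g$; hence $[\pa_{Lie},\ti{\pa}_n]=0$ if and only if this projection vanishes, and the whole lemma comes down to evaluating the $ss\g$-component of the commutator.

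First I would isolate the inputs that can contribute. Since $\pa_{Lie}$ has arity $(2)$ and lowers the total arity by $1$, while $\ti{\pa}_n$ has arity $(1,\dots,1)$ and lowers it by $n-1$, only inputs of total arity $n+1$ can reach the arity-$1$ target $ss\g$. Tracking how the two operators must act then forces a single content-$2$ slot with every other slot of content $1$: an all-content-$1$ input lets neither composite collapse to one cogenerator, whereas a slot of content $\ge 3$ overshoots the total arity, and the relatedness constraint of Lemma \ref{frelation} rules out the remaining configurations. Up to relabeling, the relevant basis elements are therefore
$$
w_k:=ss1\ot\cdots\ot ss(k-1)\ot s\big(sk\cdot s(k+1)\big)\ot ss(k+2)\ot\cdots\ot ss(n+1),\qquad 1\le k\le n,
$$
each carrying the content-$2$ slot in position $k$ and $n+1$ labels. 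By multilinearity it suffices to test on the $w_k$, and since distinct $k$ give independent basis elements the commutator vanishes iff it vanishes on every $w_k$, yielding one identity for each $k$.

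Next I would evaluate the two halves of the commutator on $w_k$. In $\ti{\pa}_n\pa_{Lie}$ the map $\pa_{Lie}$ acts first, contracting the content-$2$ slot by the Lie bracket to the single argument $(k,k+1)$; the result is a string of $n$ content-$1$ slots on which $\ti{\pa}_n$ acts with no further splitting, and its $ss\g$-component is exactly the left-hand side $D_n\big(1,\dots,(k,k+1),\dots,n+1\big)$ (up to one overall sign). In $\pa_{Lie}\ti{\pa}_n$ the map $\ti{\pa}_n$ cannot find $n$ content-$1$ slots and must instead split the content-$2$ slot through the commutative coproduct $\Delta_c$; here I invoke the explicit coderivation formula of Proposition \ref{gformofcoder}, whose single-content-$2$-slot output has the shape $s\big(sD_n(\dots)\cdot c_{n+1}\big)$. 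The Zinbiel anchoring recorded there pins the surviving leftover to the last label $n+1$, while the antisymmetry of $\Delta_c$ on the symmetric slot (Definition \ref{oddccalg}) supplies precisely the two orderings of $k$ and $k+1$. Applying the final $\pa_{Lie}$ to this content-$2$ output contracts $D_n(\dots)$ against the leftover, giving $\pm\big[(D_n(1,\dots,n),n+1)-(D_n(1,\dots,k+1,k,\dots,n),n+1)\big]$, the right-hand side; the boundary case $k=n$ is the one in which a single split piece, rather than a separate slot, furnishes the leftover. Setting the sum of the two halves to zero is then the asserted identity.

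The genuinely delicate point, which I would treat last, is the sign bookkeeping. One must reconcile the Koszul signs coming from the shifts $ss$ and $(s^{-1})^{\ot n}$ in the definitions of $\pa_{Lie}$ and $\ti{\pa}_n$, the composite prefactor displayed in Proposition \ref{gformofcoder}, and the sign introduced by the antisymmetrization in $\Delta_c$, and check that they conspire so that the relative sign between the two sides is exactly the minus appearing in the statement. The already-verified case $n=2$, in which this very computation reproduces the distributive-law relation $(\ref{nlc})$ on the element $s(s1s2)\ot ss3$, fixes the convention; the general case follows the identical pattern with $D_n$ replacing the binary bracket, so I expect the signs to cancel exactly as there.
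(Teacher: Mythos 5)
Your proposal is correct and follows essentially the same route as the paper: the paper likewise evaluates both composites on the basic elements $ss1\ot\cdots\ot s\bigl(sk\cdot s(k+1)\bigr)\ot\cdots\ot ss(n+1)$, applies Proposition \ref{gformofcoder} to compute $\ti{\pa}_{n}$ on the content-two slot, and reads off the two sides of the identity from $\pa_{Lie}\ti{\pa}_{n}$ and $\ti{\pa}_{n}\pa_{Lie}$ up to the common sign $(-1)^{\frac{n(n-1)}{2}}$. Your extra justification via cofreeness and Lemma \ref{frelation} for why only these test elements matter is a point the paper leaves implicit, but it does not change the argument.
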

\begin{proof}
From Proposition \ref{gformofcoder}, we obtain
\begin{multline*}
\ti{\pa}_{n}\big(ss1\ot\cdots\ot s\big(sk\cdot s(k+1)\big)\ot\cdots \ot ss(n+1)\big)=\\
=(-1)^{\frac{n(n-1)}{2}}s\big(sD_{n}(1,...,n)\cdot s(n+1)-sD_{n}(1,...,k+1,k,...,n)\cdot s(n+1)\big).
\end{multline*}
Therefore, up to $(-1)^{\frac{n(n-1)}{2}}$ and up to $ss=id$,
\begin{eqnarray*}
\pa_{Lie}\ti{\pa}_{n}&=&(D_{n}(1,...,n),n+1)-(D_{n}\tau^{k,k+1}(1,...,n),n+1),\\
\ti{\pa}_{n}\pa_{Lie}&=&-D_{n}(1,...,(k,k+1),...,n),
\end{eqnarray*}
which yields the identity of the lemma.
\end{proof}
\begin{corollary}[Cartan type 3-forms]
If $\g$ is a Lie-Leibniz algebra of bidegree $(0,1)$,
then a 3-form $\psi:=([.,.],.)$ of arity $(1,1,1)$ is a cocycle
with respect to $\partial_{LL}:=\pa_{Lie}+\pa_{Leib}$.
\end{corollary}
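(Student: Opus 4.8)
The plan is to verify the cocycle condition $[\pa_{LL},\pa_\psi]=0$ directly, where $\pa_\psi$ is the coderivation on $\bar{T}s\bar{S}s\g$ attached to $\psi$ by the dictionary of Lemma \ref{ctshts} and Proposition \ref{gformofcoder}, and $\pa_{LL}=\pa_{Lie}+\pa_{Leib}$. First I would split
$$
[\pa_{LL},\pa_\psi]=[\pa_{Lie},\pa_\psi]+[\pa_{Leib},\pa_\psi]
$$
and observe that the two summands must vanish \emph{separately}. Indeed, in the free Lie-Leibniz algebra $\LL\cong\Lie\ot\D$ the number of Leibniz brackets is exactly the $\D$-grading (the number of $d$'s). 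Since $\psi=([.,.],.)$ carries one Leibniz bracket, $\pa_{Lie}$ carries none and $\pa_{Leib}$ carries one, the value of $[\pa_{Lie},\pa_\psi]$ on four generators lies in $\LL^{1}(4)$ while that of $[\pa_{Leib},\pa_\psi]$ lies in $\LL^{2}(4)$; as these are distinct graded summands, $\psi$ is a cocycle if and only if each commutator vanishes.

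For the component in $\LL^{1}(4)$ I would apply Lemma \ref{cocyclecondition} with $n=3$ and $D_{3}:=\psi$; this is permissible because $\psi$ has arity $(1,1,1)$ and hence is an odd sh Leibniz operation. The lemma reduces $[\pa_{Lie},\ti{\pa}_{3}]=0$ to the three identities indexed by $k=1,2,3$, which I would check in turn. The case $k=1$ is relation (\ref{LLre02}) post-composed with $(.,.)$ in the fourth variable; the case $k=2$ is relation (\ref{LLre01}) together with the anticommutativity of the Lie bracket; and the case $k=3$ is the Jacobi identity (\ref{nld}), again combined with anticommutativity. This establishes $[\pa_{Lie},\pa_\psi]=0$.

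The component in $\LL^{2}(4)$, namely $[\pa_{Leib},\pa_\psi]=0$, carries the real content. Evaluating on a basic word, the terms in which $[.,.]$ is inserted into a slot of $\psi$ already have the normal form ``a Lie bracket of iterated Leibniz brackets'', whereas the terms in which $\psi$ is inserted into a slot of $[.,.]$ produce expressions $[([.,.],.),.]$ and $[.,([.,.],.)]$, in which a Leibniz bracket is applied to a Lie product. I would normalize the latter two types using the defining relations: (\ref{LLre02}) rewrites $[([x_{1},x_{2}],x_{3}),x_{4}]$, and (\ref{LLre01}) rewrites $[x_{1},([x_{2},x_{3}],x_{4})]$, each as a combination of Lie brackets of iterated Leibniz brackets. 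After this normalization every term sits in the same sector, and the residual cancellation is precisely the odd Leibniz identity (\ref{nla})/(\ref{oddloday}) applied inside the first slot of $(.,.)$.

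The step I expect to be the main obstacle is this last cancellation, because of the Koszul signs. Since $[.,.]$ is odd, both the coderivation formula of Proposition \ref{gformofcoder} and the odd Leibniz identity contribute nontrivial signs, so one must confirm that the normalized summands cancel with the correct coefficients and not merely as unsigned monomials. Organizing the signs in the $\mathbb{Z}_{2}$-graded bookkeeping already used in Lemma \ref{zcder} and Lemma \ref{cocyclecondition}, the vanishing of $[\pa_{Leib},\pa_\psi]$ follows; combined with $[\pa_{Lie},\pa_\psi]=0$ this shows that $\psi$ is a $\pa_{LL}$-cocycle.
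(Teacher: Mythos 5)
Your plan is correct and is essentially the computation the paper has in mind: the paper's own proof is literally ``by a direct computation,'' and your decomposition into the $[\pa_{Lie},\pa_\psi]$ and $[\pa_{Leib},\pa_\psi]$ pieces (which indeed cannot interfere, since their corestrictions live on different arities), the reduction of the first piece to Lemma \ref{cocyclecondition} via relations (\ref{LLre01}), (\ref{LLre02}) and the Jacobi identity, and the normalization of the second piece by (\ref{LLre01})--(\ref{LLre02}) followed by the odd Leibniz identity are exactly the right ingredients. The only thing left unexecuted is the Koszul-sign bookkeeping in the $\LL^{2}$ sector, which you correctly flag as the delicate step --- but the paper does not carry it out either.
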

\begin{proof}
By a direct computation.
\end{proof}
\begin{theorem}
Let $(\g,d_{0},(.,.))$ be a dg Lie algebra of degree $0$,
where the degree of $d_{0}$ is $-1$.
Suppose that $d_{0}+\hbar d_{1}+\hbar^{2}d_{2}+\cdots$
is an inhomogeneous deformation of $d_{0}$.
Here the degree of $d_{n}$ is $2n-1$ for each $n$.
Define a system of \textbf{higher derived brackets}
\begin{eqnarray*}
D_{0}&:=&(.,.)\\
D_{1}&:=&d_{0}\\
D_{2}&:=&(d_{1}.,.)\\
D_{3}&:=&((d_{2}.,.),.)\\
\cdots&\cdots&\cdots.
\end{eqnarray*}
Then $(\g,D_{0},D_{1},...)$ becomes an invariant sh Leibniz algebra.
\end{theorem}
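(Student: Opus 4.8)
The plan begins by pinning down what the hypothesis ``inhomogeneous deformation of $d_0$'' must mean: introducing a formal parameter $\hbar$ of degree $-2$, the total operator $D:=\sum_{n\ge 0}\hbar^n d_n$ is a derivation of degree $-1$ of the $\hbar$-bilinearly extended Lie bracket on $\g[[\hbar]]$, squaring to zero. Since every $d_n$ has odd degree $2n-1$, the Koszul sign in the Leibniz rule equals $(-1)^{|x|}$ regardless of $n$, so collecting powers of $\hbar$ shows this single hypothesis is equivalent to the two families of identities I will actually use: each $d_n$ is a derivation of $(.,.)$, and the master equation
\begin{equation*}
\sum_{a+b=w}d_a\circ d_b=0\qquad(w\ge 0)
\end{equation*}
holds, these being the weight-$w$ components of $D^2=0$. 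I therefore grade everything by weight, declaring $d_k$ to carry weight $k$, so that the $n$-ary bracket $D_n$ (built from $d_{n-1}$) has weight $n-1$; the arity-$N$ part of any quadratic expression in the $D_n$ then sits in the single weight $w=N-1$.

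First I would dispose of the two structurally ``easy'' requirements. We have $\pa_{Lie}^2=0$ from the Jacobi identity, and $\ti\pa_1^2=0$ is just the weight-$0$ master equation $d_0^2=0$. For the invariance $[\pa_{Lie},\ti\pa_n]=0$ I would apply Lemma \ref{cocyclecondition} to $D_n(x_1,\ldots,x_n)=(\cdots((d_{n-1}x_1,x_2),x_3)\cdots,x_n)$: the case $k=1$ of the identity it demands is exactly the derivation property of $d_{n-1}$ together with the antisymmetry of $(.,.)$, while the cases $k\ge 2$ follow from the Jacobi identity used to redistribute the right-nested brackets, just as in the computation for $D_2$ displayed after that lemma. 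Crucially this argument never touches the master equation, so it runs verbatim for every $n$.

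The substance of the theorem is the sh Leibniz condition $\pa_{shLeib}^2=0$, that is, for each arity $N$,
\begin{equation*}
\sum_{i+j=N+1}\big(\ti\pa_i\circ\ti\pa_j\big)\big|_{\text{arity }N}=0.
\end{equation*}
My plan is to evaluate the left-hand side through the right-multiplication operators $R_y:=(\,\cdot\,,y)$, for which $D_n=R_{x_n}\cdots R_{x_2}\,d_{n-1}$, using the two available structural identities: the derivation rule $[d_k,R_y]=\pm R_{d_ky}$ and the Jacobi identity $R_{(y,z)}=R_zR_y-(-1)^{|y||z|}R_yR_z$. Repeatedly applying these pushes both differentials toward the first slot; I expect the terms in which $d_{i-1}$ and $d_{j-1}$ fail to become adjacent to be produced in cancelling pairs by the shuffle combinatorics built into the Leibniz coderivation (\ref{cohom3}) and by Jacobi, leaving only $R_{x_N}\cdots R_{x_2}\big(\sum_{a+b=N-1}d_a d_b\big)x_1$, which vanishes by the master equation. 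The main obstacle will be precisely this bookkeeping: checking that the Koszul signs generated by (\ref{defpaf}) and (\ref{cohom3}) together with those in the two operator identities make the non-adjacent terms cancel in exactly matched pairs, so that only the master-equation combination survives. Once $\pa_{shLeib}^2=0$ and all $[\pa_{Lie},\ti\pa_n]=0$ are established, the identity $(\pa_{Lie}+\pa_{shLeib})^2=\pa_{Lie}^2+[\pa_{Lie},\pa_{shLeib}]+\pa_{shLeib}^2=0$ shows that $\pa_{Lie}+\pa_{shLeib}$ is a codifferential of degree $-1$ on $\bar T s\bar S s\g$, i.e. $(\g,D_0,D_1,\ldots)$ is an invariant sh Leibniz algebra.
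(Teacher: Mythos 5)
Your overall decomposition coincides with the paper's: $(\pa_{Lie}+\pa_{shLeib})^{2}=0$ is split into the sh Leibniz condition $\pa_{shLeib}\pa_{shLeib}=0$ and the invariance condition $[\pa_{Lie},\ti{\pa}_{n}]=0$, and for the latter you argue exactly as the paper does, reducing to the identity of Lemma \ref{cocyclecondition} and checking it from the derivation property of $d_{n-1}$ (your $k=1$ case, which the paper leaves implicit under the phrase ``Jacobi identity'') together with the Jacobi identity for $k\ge 2$; your remark that this half never touches the master equation is correct. Your unpacking of ``inhomogeneous deformation'' into the two families of identities (each $d_{n}$ a derivation of $(.,.)$, and $\sum_{a+b=w}d_{a}d_{b}=0$) is also the right reading of the hypothesis, with $\hbar$ of degree $-2$. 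Where you genuinely diverge is on the first half: the paper does not prove that $D_{1}+D_{2}+\cdots$ is an odd sh Leibniz structure --- it quotes this as a known result from \cite{U1} --- whereas you propose to derive it directly by writing $D_{n}=R_{x_{n}}\cdots R_{x_{2}}d_{n-1}$, commuting the differentials forward with $[d_{k},R_{y}]=\pm R_{d_{k}y}$ and the Jacobi identity, and invoking the master equation at the end. That is the correct mechanism and would make the theorem self-contained, which is a real gain; but as written this half is only a plan, since the assertion that the non-adjacent terms produced by the unshuffles in (\ref{cohom3}) cancel in matched pairs is precisely the content of the cited result, and you explicitly defer the sign bookkeeping rather than carry it out. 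Relative to the paper you are therefore no less complete --- the paper outsources the very same step --- but for your proof to stand alone you must either finish that computation or, as the author does, cite \cite{U1} for it.
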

\begin{proof}
It is known that the sum of derivations $D_{shLeib}:=D_{1}+D_{2}+\cdots$ becomes an
odd sh Leibniz structure (\cite{U1}).
Hence it suffices to show that
$D_{shLeib}$ is a cocycle of $D_{0}(\cong\pa_{Lie})$.
From the Jacobi identity of the Lie bracket,
the higher derived bracket satisfies
\begin{multline*}
((((((d_{n-1}1,2),3),...,k-1),(k,k+1),),k+2)...,n+1)=\\
=(((((d_{n-1}1,2),3),...,k),k+1),...,n+1)-(((((d_{n-1}1,2),3),...,k+1),k),...n+1),
\end{multline*}
which is just the identity proved in Lemma \ref{cocyclecondition}.
\end{proof}
\subsection{Derived homotopies}

In this section we introduce a new type of derived bracket construction,
i.e., the derived homotopy construction on sh Lie algebras
and we prove that a system of derived homotopies
defines an sh Lie-Leibniz algebra.\\
\indent
In first we recall the notion of sh Lie algebra.
\begin{definition}
A graded space $\g$ is an sh Lie algebra
if and only if there exists a codifferential $\pa_{shLie}$ of degree $-1$ on
the commutative coalgebra $\bar{S}(s\g)$.
\end{definition}
It is obvious that sh Lie algebras are also special $\LL_{\infty}$-algebras.
We put $\pa_{shLie}=\pa_{1}+\pa_{2}+\cdots$, where $\pa_{n}:\bar{S}^{n}(s\g)\to s\g$.
The coderivation $\pa_{n}$ is induced from
a higher homotopy $l_{n}:\bar{S}^{n}\g\to\g$,
$$
\pa_{n}=sl_{n}(\overbrace{s^{-1}\ot\cdots\ot s^{-1}}^{n}).
$$
The degree of $l_{n}$ is $n-2$, in particular,
the degree of $l_{1}$ (differential) is $-1$.
\medskip\\
\indent
Suppose that $(\g,d,l_{1},l_{2},...)$ is an sh Lie algebra
equipped with a differential $d$ of degree $+1$
and that $d$ is a derivation of $l_{n}$ for each $n$.
Define a collection of \textbf{derived homotopies}:
\begin{eqnarray*}
&&l_{1}\\
&&l_{2} \ , \ l_{2}(d\ot 1)\\
&&l_{3} \ , \ -l_{3}(d\ot 1\ot 1) \ , \ -l_{3}(d\ot d\ot 1)\\
&&\cdots\\
&&l_{m} \ , \ \cdots \ , \ (-1)^{\frac{(2m+i-1)i}{2}}l_{m}(d^{\ot i},1^{\ot m-i}) \ , \ \cdots \ , \
(-1)^{\frac{(3m-2)(m-1)}{2}}l_{m}(d^{\ot m-1},1)\\
&&\cdots.
\end{eqnarray*}
The arity of $(\pm)^{i}_{m}l_{m}(d^{\ot i}\ot 1^{\ot m-i})$
is by definition $(\overbrace{1,...,1}^{i},m-i)$, where
$$
(\pm)^{i}_{m}:=(-1)^{\frac{(2m+i-1)i}{2}}=(-1)^{m+(m+1)+\cdots+(m+i-1)},
$$
which is the sign associated with the bracket degree (see Remark \ref{bracketdegree}.)
This sign is hierarchic, for instance,
$-l_{3}(d\ot 1\ot 1)$ is the derived homotopy of $l_{3}$
and $-l_{3}(d\ot d\ot 1)$ is the derived homotopy of $-l_{3}(d\ot 1\ot 1)$.\\
\indent
The main theorem of this section is as follows.
\begin{theorem}\label{lasttheorem}
The system of derived homotopies above
defines an $\LL_{\infty}$-algebra structure on $\g$.
\end{theorem}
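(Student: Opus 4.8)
The plan is to realize the whole system of derived homotopies as a single coderivation $\pa_{shLL}$ on the cofree $\Zinb\odot\s\Com$-coalgebra $\bar{T}s\bar{S}s\g$ and then to verify $\pa_{shLL}\pa_{shLL}=0$; morally, this is the homotopy-level avatar of the operadic identity $\LL\cong\Lie\ot\D$ (Proposition \ref{keyprop}), in which Lie-Leibniz structure is produced from Lie structure by inserting the formal differential $d$. By Lemma \ref{ctshts} a coderivation is pinned down by its corestriction to the cogenerators $ss\g$, so I would assemble $\pa_{shLL}=\sum\pa_{(a_{1},\ldots,a_{n})}$ from its arity components via Proposition \ref{gformofcoder}: the arity-$(m)$ piece is induced by $l_{m}$ and reproduces the given sh Lie structure on the length-one, commutative directions $\bar{S}s\g$, while for $i\ge 1$ the arity-$(\overbrace{1,\ldots,1}^{i},m-i)$ piece is induced by the derived homotopy $(\pm)^{i}_{m}l_{m}(d^{\ot i}\ot 1^{\ot m-i})$. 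A degree count using the formula after (\ref{defpaf}) makes this consistent: $l_{m}(d^{\ot i}\ot 1^{\ot m-i})$ has degree $(m-2)+i$, so its coderivation has degree $2+((m-2)+i)-m-(i+1)=-1$; hence $\pa_{shLL}$ is homogeneous of degree $-1$, as an $\LL_{\infty}$ codifferential must be.

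Next I would reduce the identity $\pa_{shLL}\pa_{shLL}=0$, through Lemma \ref{ctshts}, to the vanishing of the corestriction of $\pa_{shLL}\pa_{shLL}$ to $ss\g$ on each input arity $(b_{1},\ldots,b_{N})$. Since a composite $\pa_{\alpha}\pa_{\beta}$ is again a coderivation and, by Lemma \ref{frelation}, is non-trivial on a given arity only when the two arities are related, the expression on a fixed $(b_{1},\ldots,b_{N})$ is a finite sum of nested terms in which one derived homotopy is inserted either into a single symmetric slot or into a whole block (through the commutative coproduct $\Delta_{c}$) of another. This is the homotopy analogue of the binary computation that produced the relations (\ref{nlb}) and (\ref{nlc}), now recording in addition how the operators $d$ attached to the leaves interact across the insertion.

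The heart of the proof is to show that this finite sum collapses to a generalized Jacobi (sh Lie) relation $\sum_{p+q=M+1}\sum_{\sigma}\pm\, l_{p}(l_{q}(\ldots),\ldots)=0$ carrying a fixed decoration of $d$'s on its leaves. Three inputs do the work. First, $\pa_{shLie}\pa_{shLie}=0$ supplies the undecorated relations. Second, the hypothesis that $d$ is a derivation of each $l_{m}$ lets me rewrite any term in which a $d$ has landed on the output $l_{q}(\ldots)$ of an inner bracket as a signed sum of terms with that $d$ redistributed over the leaves of $l_{q}$; this is exactly the move that reconciles the two a priori distinct ways of inserting a derived homotopy into another and converts the decorated compositions into a single Jacobi relation. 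Third, $d^{2}=0$ kills every term in which this redistribution would stack a second $d$ on a leaf that already carries one, leaving only admissible decorations. What survives is the sh Lie relation with the $d$'s frozen on a chosen set of leaves, and it therefore vanishes.

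The main obstacle is the signs rather than the combinatorics, and I would control them by induction on $i$, the number of $d$'s, exploiting the hierarchic structure of the prefactors: the arity-$(1^{i},m-i)$ homotopy is the derived homotopy of the arity-$(1^{i-1},m-i+1)$ one, so the signs obey $(\pm)^{i}_{m}=(-1)^{m+i-1}(\pm)^{i-1}_{m}$, the increment being precisely the bracket-degree sign of Remark \ref{bracketdegree}. The task is then to check that this recursion is compatible with the Koszul signs $\e(\sigma)(-1)^{n(n-1)/2}$ of Proposition \ref{gformofcoder} produced when $d$ is commuted past the symmetric slots, and with the Koszul signs in the derivation identity for $d$. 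The base case $i=0$ is $\pa_{shLie}\pa_{shLie}=0$ and the single-$d$ case is the sh analogue of the invariance identity of Lemma \ref{cocyclecondition}; the inductive step propagates the single-$d$ identity up the hierarchy, so that once the signs agree at each level the terms of the sum in the previous paragraph cancel pairwise and $\pa_{shLL}\pa_{shLL}=0$ follows.
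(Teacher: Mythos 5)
Your proposal is correct and follows essentially the same route as the paper: the paper also realizes the system as $\pa_{shLL}=\sum\pa_{m}^{(i)}$ on $\bar{T}s\bar{S}s\g$, splits $[\pa_{shLL},\pa_{shLL}]$ by arity via Lemma \ref{frelation}, cancels the mixed-arity terms in pairs $(\pa_{m}^{(i)}\pa_{n}^{(j)}+\pa_{m}^{(i-1)}\pa_{n}^{(j+1)})(x)=0$ using exactly your two ingredients (the derivation property of $d$ to trade a $d$ on the output of an inner bracket for a sum of leaf decorations, and $dd=0$ to kill doubly decorated leaves), and reduces the remaining fully-nested arity to the sh Lie relation evaluated on $d$-decorated inputs (Lemma \ref{thelastlemma}), with the signs controlled by the same recursion $(\pm)^{i}_{m}=(-1)^{m+i-1}(\pm)^{i-1}_{m}$. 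The only cosmetic difference is that the paper reads the derivation identity in the collecting direction (leaves to output) and verifies the signs by direct computation in the Appendix rather than by induction on the number of $d$'s.
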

The coderivation induced from $(\pm)^{i}_{m}l_{m}(d^{\ot i}\ot 1^{\ot n-i})$ is given by
$$
\pa_{m}^{(i)}:=(\pm)^{i}_{m}
s\Big(sl_{m}(d^{\ot i}\ot 1^{\ot m-i})(s^{-1})^{\ot m}\Big)
\big((s^{-1})^{\ot i}\ot s^{-1}\big),
$$
whose degree is $-1$.
If $i=0$, then $\pa^{(0)}_{m}=s\pa_{m}s^{-1}$.
In the category of $\mathbb{Z}_{2}$-graded spaces,
$$
\pa_{m}^{(i)}=(\pm)^{i}_{m}(-1)^{\frac{i(i-1)}{2}+i(m-i)}l_{m}(d^{\ot i},1^{\ot m-i})
\big(1^{\ot i}\ot(s^{-1})^{\ot m-i}s^{-1}\big).
$$
For the basic element of $(ss\g)^{\ot i}\ot s\bar{S}^{m-i}s\g$,
\begin{multline}\label{theX}
\pa^{(i)}_{m}\big(ss1\ot\cdots\ot ssi\ot s(s(i+1)\cdots s(n))\big)\\
=(\pm)^{i}_{m}(-1)^{\frac{i(i-1)}{2}+i(m-i)+\frac{(m-i)(m-i-1)}{2}}l_{m}(d1,...,di,i+1,...,m)\\
=(\pm)^{i}_{m}(-1)^{\frac{m(m-1)}{2}}l_{m}(d1,...,di,i+1,...,m)\\
=(\pm)^{i}_{m}\{d1,...,di,i+1,...,m\},
\end{multline}
where $\{,...,\}:=(-1)^{\frac{m(m-1)}{2}}l_{m}$.
The coderivation induced from the derived homotopies is
$$
\pa_{shLL}:=\sum_{m\ge 1}\sum_{i\ge 0}^{m-1}\pa_{m}^{(i)}.
$$
We should prove $[\pa_{shLL},\pa_{shLL}]=0$, or equivalently,
\begin{equation}\label{shllshll}
\sum_{i+j=Const\atop m+n=Const}[\pa_{m}^{(i)},\pa_{n}^{(j)}]=0.
\end{equation}
\begin{lemma}
Let $x$ be an element of
$s\bar{S}^{a_{1}}s\g\ot\cdots\ot s\bar{S}^{a_{j+1}}s\g$.
\begin{eqnarray*}
\pa_{n}^{(j)}(x)\neq 0\Rightarrow
(a_{1},...,a_{j+1})=
\left\{
\begin{array}{l}
(\overbrace{1,...,1}^{j-1},n-j+1,a_{j+1}),\\
(\overbrace{1,...,1}^{j},a_{j+1}\ge n-j),\\
\end{array}
\right.
\end{eqnarray*}
and if $(a_{1},...,a_{j+1})=(\overbrace{1,...,1,2,1,...,1}^{j-1},n-j,a_{j+1})$,
then $\pa_{n}^{(j)}(x)=0$.
\end{lemma}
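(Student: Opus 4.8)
The plan is to read off the admissible arities directly from the relation lemma and then isolate the one case that secretly vanishes. The coderivation $\pa_{n}^{(j)}$ is induced from the derived homotopy $f=(\pm)^{j}_{n}l_{n}(d^{\ot j}\ot 1^{\ot n-j})$, whose arity is the $(j+1)$-tuple $\alpha=(\overbrace{1,\ldots,1}^{j},n-j)$. First I would invoke Lemma \ref{frelation}: if $\pa_{n}^{(j)}(x)\neq 0$ for some $x\in s\bar{S}^{a_{1}}s\g\ot\cdots\ot s\bar{S}^{a_{j+1}}s\g$, then $\alpha\sim_{k}\beta$ for some $k\in\{1,\ldots,j+1\}$, where $\beta=(a_{1},\ldots,a_{j+1})$. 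Thus the whole statement reduces to solving $\alpha\sim_{k}\beta$ for each $k$ and deciding which solutions actually survive.

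Next I would translate $\sim_{k}$ explicitly. Since $\alpha_{1}=\cdots=\alpha_{j}=1$ and $\alpha_{j+1}=n-j$, the merging prescription of the relation yields exactly three shapes. For $k=j+1$ nothing is merged and only the last block may grow, giving $\beta=(\overbrace{1,\ldots,1}^{j},a_{j+1})$ with $a_{j+1}\ge n-j$, the second listed arity. For $k=j$ the final differential slot merges with the arity-$(n-j)$ slot, $\alpha_{j}+\alpha_{j+1}=n-j+1$, giving $\beta=(\overbrace{1,\ldots,1}^{j-1},n-j+1,a_{j+1})$, the first listed arity. For $k<j$ two differential slots merge, $\alpha_{k}+\alpha_{k+1}=2$, producing $\beta=(\overbrace{1,\ldots,1,2,1,\ldots,1}^{j-1},n-j,a_{j+1})$ with the $2$ in one of the first $j-1$ positions. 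This is precisely the arity the lemma asserts to be killed.

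The substantive step is to show this last case gives $\pa_{n}^{(j)}(x)=0$. Here the coderivation is forced to place two copies of $d$ on the single symmetric block $\bar{S}^{2}s\g$ occupying position $k$. Using the explicit form of $\pa_{n}^{(j)}$ from Proposition \ref{gformofcoder} together with the commutative coproduct of Proposition \ref{cofreezccoalgebra}, I would split this block through $\Delta_{c}$ and apply $d$ to each of the two resulting arguments before inserting them into $l_{n}$. Because $l_{n}$ is graded symmetric and $d$ is an \emph{odd} derivation, the two branches of $\Delta_{c}$ feed $l_{n}$ with its first two arguments interchanged and carry a relative Koszul sign of two odd elements; this sign turns the otherwise symmetric sum into a difference, so the two terms cancel. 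Heuristically this is the symmetric analogue of $dd=0$: two odd differentials applied to a commutative pair annihilate each other.

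The hard part will be the sign bookkeeping in this cancellation: one must confirm that the Koszul signs from the two branches of $\Delta_{c}$, the suspensions $s^{-1}$ appearing in the definition of $\pa_{n}^{(j)}$, and the oddness of $d$ combine to a global $1-1=0$ rather than $1+1$. By contrast, the enumeration of the two admissible shapes is a direct and routine unwinding of the relation $\sim_{k}$ supplied by Lemma \ref{frelation}, so no extra argument is needed there.
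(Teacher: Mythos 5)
Your overall route is the same as the paper's: part one is exactly the paper's argument (the arity of $\pa_{n}^{(j)}$ is $(\overbrace{1,\ldots,1}^{j},n-j)$, and Lemma \ref{frelation} forces $(a_{i})$ to be obtained from it by the relation $\sim_{k}$; your case analysis over $k=j+1$, $k=j$, $k<j$ is correct and complete), and for part two you identify the same mechanism the paper uses, namely that the two branches of the coproduct on the $\bar{S}^{2}s\g$ block feed $l_{n}$ with two $d$-decorated arguments interchanged and cancel.

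However, the one step that carries the content of part two is stated in a way that, read literally, fails. You attribute the cancellation to $l_{n}$ being \emph{graded symmetric} together with a relative Koszul sign of two odd elements: but if $l_{n}$ is graded symmetric and $d\sigma_{1},d\sigma_{2}$ are both odd (the generic case $|1|=|2|=0$), then $l_{n}(d2,d1,\ldots)=(-1)^{|d1||d2|}l_{n}(d1,d2,\ldots)=-l_{n}(d1,d2,\ldots)$, so the ``difference'' $l_{n}(d1,d2,\ldots)-l_{n}(d2,d1,\ldots)$ equals $2\,l_{n}(d1,d2,\ldots)$, not $0$. The paper's cancellation rests on the opposite parity: $l_{n}$ is the \emph{anti}-symmetric $L_{\infty}$ bracket on the unshifted space $\g$ (equivalently, the induced $\pa_{n}$ is the symmetric map on $s\g$, and the desuspensions contribute the extra sign), so that $l_{n}(d2,d1,\ldots)=-(-1)^{|d1||d2|}l_{n}(d1,d2,\ldots)=+l_{n}(d1,d2,\ldots)$ and the relative $-1$ from the coproduct kills the sum — this is exactly the computation $l_{3}(d1,d2,3)\cdot 4-l_{3}(d2,d1,3)\cdot 4=0$ in the paper. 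You flagged the sign bookkeeping as the hard part and deferred it, but since the entire second assertion of the lemma is this sign, the symmetric-versus-antisymmetric parity must be pinned down correctly rather than left open; as written your version would prove the term is nonzero.
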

\begin{proof}
The arity of $\pa_{n}^{(j)}(x)$ is $(\overbrace{1,...,1}^{j},n-j)$.
The first part of the lemma is followed from Lemma \ref{frelation}.
The second part is from the anti-symmetry of $l_{n}$, for example,
up to sign,
\begin{eqnarray*}
\pa_{3}^{(2)}(s(s1s2)\ot 3\ot 4)&\sim& l_{3}(d1,d2,3)\cdot 4-l_{3}(d2,d1,3)\cdot 4\\
&=&l_{3}(d1,d2,3)\cdot 4-l_{3}(d1,d2,3)\cdot 4=0.
\end{eqnarray*}
\end{proof}
\begin{lemma}\label{xlemma}
The arity of $\sum_{i+j=Const\atop m+n=Const}[\pa_{m}^{(i)},\pa_{n}^{(j)}]$
is $(\overbrace{1,...,1}^{i+j},m+n-1-i-j)$.
\end{lemma}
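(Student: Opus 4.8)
The plan is to treat this as an arity-bookkeeping statement and reduce it to the combinatorics of how the two induced coderivations compose on $\bar{T}s\bar{S}s\g$. First I would record the input data coming from (\ref{defpaf}): the coderivation $\pa_m^{(i)}$ is induced from the map $(\pm)^{i}_{m}l_m(d^{\ot i}\ot 1^{\ot m-i})$ of arity $(\overbrace{1,\dots,1}^{i},m-i)$ and total arity $m$, and likewise $\pa_n^{(j)}$ is induced from a map of arity $(\overbrace{1,\dots,1}^{j},n-j)$ and total arity $n$. Since the graded commutator of two coderivations is again a coderivation, $[\pa_m^{(i)},\pa_n^{(j)}]$ is induced from a collection of maps $\bar{T}s\bar{S}s\g\to ss\g$, and the whole task is to pin down their common arity.

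Next I would argue that the total arity equals $m+n-1$. Corestricting the composite to the cogenerators $ss\g$ forces the single output of one coderivation to be fed into exactly one input leaf of the other; thus one leaf is consumed and the $m$ and $n$ leaves combine to $(m-1)+n=m+n-1$. This already fixes the sum of the target tuple to be $m+n-1$, so it only remains to determine the shape of the tuple.

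Then I would determine that shape by tracking the two structural levels separately. The outer (Zinbiel) level is where the differentiated arguments live: each factor $d$ produces its own degree-$1$ slot at that level, and these slots persist through the composition, giving the $i$ slots of $f$ together with the $j$ slots of $g$, hence $i+j$ slots of degree $1$. The inner (symmetric) level carries the undifferentiated arguments; using the commutative coproduct of Proposition \ref{cofreezccoalgebra} together with the shuffle identity of Lemma \ref{newlemma1}, the undifferentiated inputs of $f$ and of $g$ are forced into a single symmetric slot, necessarily of degree $(m+n-1)-(i+j)$. Combining the two counts, every nonzero term carries the arity $(\overbrace{1,\dots,1}^{i+j},m+n-1-i-j)$, so the entire sum over $i+j=\mathrm{Const}$, $m+n=\mathrm{Const}$ does as well.

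The step I expect to be the main obstacle is excluding the wrong insertion pattern, in which the output of $\pa_n^{(j)}$ lands in one of the differentiated slots of $\pa_m^{(i)}$ rather than in the bundled slot. Naively this produces a term with two separate symmetric slots, which is not of the asserted form. Here I would invoke the derivation property of $d$ on $l_n$ together with $d^{2}=0$, which is precisely the content encoded in Lemma \ref{zcder}: applying $d$ to the output $l_n(\dots)$ only redistributes $d$ onto the undifferentiated arguments and cannot re-differentiate an already differentiated one. Consequently Lemma \ref{frelation} marks the offending arity as non-admissible, and the corresponding term either vanishes or collapses back into the single claimed arity. Once this reduction is in place, the shape count above is immediate and the lemma follows.
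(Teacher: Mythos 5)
Your reduction of the total arity to $m+n-1$ and your identification of the dangerous case --- the output of $\pa_{n}^{(j)}$ landing in a differentiated slot of $\pa_{m}^{(i)}$, producing an input arity with two slots of size greater than one --- match the paper's Claim inside its proof (the cases (A1), (A2) versus (B)). But your resolution of that case is where the argument breaks. You assert that Lemma \ref{frelation} marks these arities as non-admissible and that the offending terms ``either vanish or collapse back into the single claimed arity.'' Neither happens. Arities of the form $(1,\dots,1,n-j,1,\dots,1,m-i)$ \emph{are} related, in the sense of $\sim_{k}$, to the arities needed for the composition, so Lemma \ref{frelation} does not kill them; and the individual composite $\pa_{m}^{(i)}\pa_{n}^{(j)}$ is genuinely nonzero on such inputs --- the appendix computation (\ref{pamipanj}) exhibits the surviving term explicitly, with $d\{d\sigma_{p-j+1},\dots,d\sigma_{p},p+1,\dots,p+n-j\}$ fed as a differentiated argument into the outer bracket. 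The derivation property of $d$ and $d^{2}=0$ redistribute the differential but do not force vanishing.

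The actual mechanism in the paper is a pairwise cancellation across different splittings of the fixed total $i+j$: on an input of type (A1) or (A2) one has
$$
\big(\pa_{m}^{(i)}\pa_{n}^{(j)}+\pa_{m}^{(i-1)}\pa_{n}^{(j+1)}\big)(x)=0,
$$
proved in Appendix A1 by showing both compositions produce the same bracket expression with opposite signs, using the identities $(\pm)_{n}^{j}(-1)^{n+j}=(\pm)_{n}^{j+1}$ and $(\pm)_{m}^{i-1}(-1)^{m+i-1}=(\pm)_{m}^{i}$. This is precisely why the lemma is stated for the \emph{sum} over $i+j=\mathrm{Const}$ and $m+n=\mathrm{Const}$ rather than for a single commutator $[\pa_{m}^{(i)},\pa_{n}^{(j)}]$, whose arity statement would be false. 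Your proof needs this sign-level cancellation (or an equivalent argument); without it the step you yourself flagged as the main obstacle remains open.
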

\begin{proof}
The total arity of $[\pa_{m}^{(i)},\pa_{n}^{(j)}]$ is $m+n-1$.
So we take an element $x$ in $sS^{a_{1}}s\g\ot\cdots\ot sS^{a_{i+j+1}}s\g$,
where $a_{1}+\cdots+a_{i+j+1}=m+n-1$.
\begin{claim}
\begin{eqnarray*}\label{last01}
\pa_{m}^{(i)}\pa_{n}^{(j)}(x)\neq 0\Rightarrow
(a_{1},...,a_{i+j+1})=
\left\{
\begin{array}{ll}
(A1) &(1,...,1,n-j,\overbrace{1,...,1}^{\ge 0},m-i)\\
(A2) & (1,...,1,n-j+1,\overbrace{1,...,1}^{\ge 0},m-i-1)\\
(B) & (1,...,1,m+n-1-i-j),\\
\end{array}
\right.
\end{eqnarray*}
because $\pa_{n}^{(j)}(x)\in(ss\g)^{\ot i}\ot s(S^{m-i}s\g)$.
\end{claim}
The lemma says that in the cases of (A1) and (A2), $[\pa_{shLL},\pa_{shLL}](x)=0$.
Suppose (A1), i.e.,
$(a_{1},...,a_{i+j+1}):=(\overbrace{1,...,1}^{p},q,\overbrace{1,...,1}^{i+j-p-1},r)$.
Here $q+r=m+n-i-j$, because $a_{1}+\cdots+a_{i+j+1}=m+n-1$.
Then we obtain
$$
[\pa_{shLL},\pa_{shLL}](x)=
\sum_{n-j=q \atop j\le p}
(\pa_{m}^{(i)}\pa_{n}^{(j)}+\pa_{m}^{(i-1)}\pa_{n}^{(j+1)})(x),
$$
where $m+n=Const$ and $i+j=Const$.
It is obvious that the case of (A2) is included in this case.
For example, if $(a_{1},...,a_{i+j+1})=(1,1,3,1,2)$, then $i+j=4$, $m+n=9$, $n-j=3$, $p=2$ and
$$
[\pa_{shLL},\pa_{shLL}](x)=\Big(
(\pa^{(2)}_{4}\pa^{(2)}_{5}+\pa^{(1)}_{4}\pa^{(3)}_{5})+
(\pa^{(3)}_{5}\pa^{(1)}_{4}+\pa^{(2)}_{5}\pa^{(2)}_{4})+
(\pa^{(4)}_{6}\pa^{(0)}_{3}+\pa^{(3)}_{6}\pa^{(1)}_{3})\Big)(x).
$$
By a direct computation, one can prove that
\begin{equation}\label{minj=0}
(\pa_{m}^{(i)}\pa_{n}^{(j)}+\pa_{m}^{(i-1)}\pa_{m}^{(j+1)})(x)=0,
\end{equation}
which yields the lemma.
In Appendix we give a proof of (\ref{minj=0}).
\end{proof}
\begin{lemma}\label{thelastlemma}
\begin{multline*}
\sum_{i+j=k}\pa_{m}^{(i)}\pa_{n}^{(j)}
\Big(ss1\ot\cdots\ot ssk\ot s\big(s(k+1)\cdots s(m+n-1)\big)\Big)=\\
=(-1)^{k}\pa_{m}\pa_{n}
\big(sd1\cdots sdk\cdot s(k+1)\cdots s(m+n-1)\big),
\end{multline*}
where $sd1\cdots sdk\cdot s(k+1)\cdots s(m+n-1)\in\bar{S}^{m+n-1}(s\g)$
and $\pa_{m},\pa_{n}$ are the coderivations of the sh Lie algebra.
\end{lemma}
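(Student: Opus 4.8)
The plan is to evaluate the two derived-homotopy coderivations one after the other by means of the explicit formula (\ref{theX}) and its general form Proposition \ref{gformofcoder}, and then to match the result term-by-term with the two successive sh Lie coderivations $\pa_{m}\pa_{n}$ acting on the commutative coalgebra $\bar{S}(s\g)$. The organizing dictionary is this: a singleton Zinbiel letter $ssi$, once it is absorbed into one of the \emph{differentiated} slots of a derived homotopy, plays the role of the differentiated generator $sdi$ sitting inside a symmetric bracket, whereas an element drawn from a symmetric block stays undifferentiated. Equation (\ref{theX}) is precisely the one-coderivation instance of this dictionary, reading $ss j\mapsto dj$ and $s(\cdots)\mapsto(\cdots)$; the lemma is its two-step refinement, in which the $k$ singletons $ss1,\dots,ssk$ get split into the $j$ leaves differentiated by the inner bracket and the $i=k-j$ leaves differentiated by the outer one.

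First I would apply the inner coderivation $\pa_{n}^{(j)}$. By Lemma \ref{frelation} the only nonzero contributions occur when $\pa_{n}^{(j)}$ absorbs $j$ of the singletons $ss1,\dots,ssk$, applying $d$ to each, together with an $(n-j)$-element subword split off from the block $s\big(s(k+1)\cdots s(m+n-1)\big)$ by the partial commutative coproduct $\Delta_{c}^{\,n-j}$ of Proposition \ref{cofreezccoalgebra}. By (\ref{theX}) every such term produces a single new letter carrying the value $(-1)^{\frac{n(n-1)}{2}}l_{n}$ evaluated on those $j$ differentiated and $n-j$ undifferentiated arguments, i.e.\ the image under the dictionary of the corresponding summand of $\pa_{n}$ on $\bar{S}(s\g)$.

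Next I would apply $\pa_{m}^{(i)}$ with $i=k-j$, watching the arity bookkeeping: after $\pa_{n}^{(j)}$ there remain $i$ undisturbed singletons and a symmetric block of length $m-1-i$, so the $(m-i)$-ary slot of $\pa_{m}^{(i)}$ must absorb the freshly produced letter \emph{together with} that block, while its $i$ differentiated slots take the $i$ surviving singletons. Thus no second differential lands on the $l_{n}$-output, and (\ref{theX}) again gives $l_{m}$ applied to that $l_{n}$-value and to the $m-1$ remaining arguments, exactly $i$ of them differentiated. Summing over $i+j=k$ and over all coproduct choices, these terms are in bijection with the terms of $\pa_{m}\pa_{n}\big(sd1\cdots sdk\cdot s(k+1)\cdots s(m+n-1)\big)$ computed from (\ref{cohom2}): the unshuffle coproduct underlying $\pa_{n}$ selects $n$ of the $m+n-1$ symmetric factors, automatically any $j$ of which are $d$-decorated, matching $\pa_{n}^{(j)}$; and $\pa_{m}$ then brackets the $l_{n}$-value with $m-1$ further factors, $i=k-j$ of them $d$-decorated, matching $\pa_{m}^{(i)}$. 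The hypothesis that $d$ is a derivation of each $l_{n}$ is what guarantees that any configuration in which the new letter would have been differentiated is already supplied by the neighbouring summand $(i-1,j+1)$, so that the sum over $i+j=k$ closes up to reproduce precisely the plain composite on the pre-differentiated word.

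The hard part will be the sign. I would collect the Koszul factors $\e(\sigma)$ from the unshuffles in $\Delta_{z}$ and $\Delta_{c}$, the suspension signs produced by Proposition \ref{gformofcoder}, the two normalizations $(-1)^{\frac{n(n-1)}{2}}$ and $(-1)^{\frac{m(m-1)}{2}}$ hidden in the bracket symbols $\{\cdots\}$, and above all the bracket-degree signs $(\pm)_{n}^{j}$ and $(\pm)_{m}^{i}$, and show that term by term their product differs from the corresponding sign of $\pa_{m}\pa_{n}$ in (\ref{cohom2}) by exactly the single global factor $(-1)^{k}$. I expect the bracket-degree signs to be precisely what converts the sh Lie signs into the derived-homotopy signs, with the residual $(-1)^{k}$ coming from commuting the $k$ differentials of degree $+1$ past the suspended arguments they decorate. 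Verifying this sign identity uniformly in $i,j,m,n$ and in the coproduct data is the delicate computational core; the underlying combinatorial bijection is immediate once the dictionary (\ref{theX}) and the coproduct of Proposition \ref{cofreezccoalgebra} are in place.
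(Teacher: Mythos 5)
Your plan coincides with the paper's own proof in Appendix A2: evaluate $\pa_{n}^{(j)}$ and then $\pa_{m}^{(i)}$ on the basic element via (\ref{theX}) and Proposition \ref{gformofcoder}, match the resulting terms bijectively with the corresponding summands of $\pa_{m}\pa_{n}$ on $\bar{S}^{m+n-1}(s\g)$ under the dictionary $ssi\mapsto sdi$, and compare signs to extract the global $(-1)^{k}$ — and like the paper you leave the final uniform sign verification as an asserted computation. One small inaccuracy in your narrative: for the arity $(1,\dots,1,m+n-1-k)$ relevant here the new letter is never differentiated, so the appeal to the derivation property of $d$ and the cancellation against the $(i-1,j+1)$ summand belongs to Lemma \ref{xlemma} (identity (\ref{minj=0})), not to this lemma; it does not affect the correctness of your argument.
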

\begin{proof}
See Appendix.
\end{proof}
The theorem, i.e., equation (\ref{shllshll})
is followed from Lemmas \ref{xlemma} and \ref{thelastlemma}.
\begin{corollary}
The diagonal part $l_{1},l_{2}(d\ot 1),-l_{3}(d\ot d\ot 1),...$ defines an sh Leibniz algebra of odd.
\end{corollary}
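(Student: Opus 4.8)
The plan is to realize the diagonal operations as a sub-dg-coalgebra of the full bar coalgebra and to read off the sh Leibniz structure from the restricted codifferential. First I would name the diagonal operations $D_{m}:=(\pm)^{m-1}_{m}l_{m}(d^{\ot m-1}\ot 1)$; these have arity $(\overbrace{1,\dots,1}^{m})$, and since $|l_{m}|=m-2$ while each $d$ contributes $+1$, their degree is $|D_{m}|=(m-2)+(m-1)=2m-3$, exactly the degree demanded in the definition of an odd sh Leibniz algebra. By (\ref{theX}) with $i=m-1$ the associated coderivation $\pa^{(m-1)}_{m}$ agrees on all-ones arguments with the coderivation $\ti{\pa}_{m}$ induced from $D_{m}$ on the Zinbiel coalgebra.

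Next I would single out the subspace $\bar{T}ss\g\subseteq\bar{T}s\bar{S}s\g$ consisting of words all of whose tensor factors lie in $ss\g=s\bar{S}^{1}s\g$ (all commutative weights equal to $1$); this is precisely the cofree Zinbiel coalgebra over $ss\g$. I claim it is a sub-$\Zinb\odot\s\Com$-coalgebra. Indeed the Zinbiel coproduct $\Delta_{z}$ only reshuffles tensor factors and hence preserves the all-ones condition, while the commutative coproduct $\Delta_{c}$ vanishes identically on it: in the formula of Proposition \ref{cofreezccoalgebra} every summand carries a factor $\Delta_{c}(sc_{k})$, and $\Delta_{c}$ is zero on the weight-$1$ elements $sc_{k}\in ss\g$.

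The crucial step is to show that $\pa_{shLL}$ restricts to this sub-coalgebra and that only the diagonal terms survive. For $i<m-1$ the operation $l_{m}(d^{\ot i}\ot 1^{\ot m-i})$ has arity $(\overbrace{1,\dots,1}^{i},m-i)$ with $m-i\ge 2$, so applying it would require a single tensor factor of commutative weight $m-i\ge 2$; but in $\bar{T}ss\g$ every factor has weight $1$ and, as $\Delta_{c}$ vanishes there, no heavier slot can be assembled. By the arity bookkeeping of Lemma \ref{frelation} (the relation $(a_{i})\sim_{k}(b_{i})$), this forces $\pa^{(i)}_{m}$ to annihilate $\bar{T}ss\g$ whenever $i<m-1$. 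Hence $\pa_{shLL}|_{\bar{T}ss\g}=\sum_{m\ge 1}\pa^{(m-1)}_{m}|_{\bar{T}ss\g}=\sum_{m\ge 1}\ti{\pa}_{m}=:\pa_{shLeib}$, a degree $-1$ coderivation on the Zinbiel sub-coalgebra. This is the point I expect to demand the most care: one must verify that no off-diagonal coderivation leaks a nonzero all-ones component, which is exactly what Lemma \ref{frelation} rules out.

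Finally, since $\bar{T}ss\g$ is a sub-dg-coalgebra of $(\bar{T}s\bar{S}s\g,\pa_{shLL})$, the identity $(\pa_{shLL})^{2}=0$ of Theorem \ref{lasttheorem} restricts to $\pa_{shLeib}^{\,2}=0$ on $\bar{T}ss\g$. By the binary case of Lemma \ref{zbar} together with its higher-homotopy analogue (the definition of odd sh Leibniz algebra in Section 6.1), a degree $-1$ codifferential on the cofree Zinbiel coalgebra over $ss\g$ is exactly the datum of an odd sh Leibniz structure $(\g,D_{1},D_{2},\dots)$. Therefore the diagonal part $l_{1},\,l_{2}(d\ot 1),\,-l_{3}(d\ot d\ot 1),\dots$ defines an odd sh Leibniz algebra, as claimed.
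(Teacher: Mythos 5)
Your proof is correct, and it takes the route the paper itself intends: the corollary is stated without proof as a consequence of Theorem \ref{lasttheorem}, and the implicit argument is exactly yours --- the arity-$(1,\dots,1)$ coderivations are the only ones that survive on the weight-one subcoalgebra $\bar{T}ss\g$ (by Lemma \ref{frelation}, since $\Delta_{c}$ vanishes there), so $\pa_{shLL}$ restricts to $\pa_{shLeib}=\sum\ti{\pa}_{m}$ and its square vanishes by restriction. Your degree count $|D_{m}|=2m-3$ and the identification of the restricted codifferential with the odd sh Leibniz structure of Section 6.1 are both accurate, so the write-up fills the gap the paper leaves implicit without deviating from its machinery.
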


\section{Appendix}

\noindent
\textbf{A0. Manin white product} (cf. Vallette \cite{V}).
Given two bq operads $\P_{i}=(E_{i},R_{i})$, $i\in\{1,2\}$,
the Manin white product is defined by
$$
\P_{1}\c_{M}\P_{2}:=(E_{1}\ot E_{2},R_{12}),
$$
where $R_{12}:=\Psi^{-1}\big(R_{1}\ot\T E_{2}(3)+\T E_{1}(3)\ot R_{2}\big)$
and where $\Psi$ is the lift of the natural map $E_{1}\ot E_{2}\to\T E_{1}\ot\T E_{2}$,
$$
\begin{CD}
E_{1}\ot E_{2}@>\subset>>\T(E_{1}\ot E_{2})\\
@| @VV{\Psi}V \\
E_{1}\ot E_{2}@>>>\T E_{1}\ot\T E_{2}.
\end{CD}
$$
\medskip\\
\noindent
\textbf{A1. Proof of (\ref{minj=0})}.
In the following we put $(1,...,m):=(-1)^{\frac{m(m-1)}{2}}l_{m}(1,...,m)$ for each $m$.
Take the basic element in
$\g^{\ot p}\ot s(\bar{S}^{n-j}s\g)\ot\g^{\ot i+j-p-1}\ot s(\bar{S}^{m-i}s\g)$,
\begin{multline}\label{basicelement}
1\ot\cdots\ot p\ot s\big(s(p+1)\cdots s(p+n-j)\big)\ot
(p+n-j+1)\ot\cdots\ot(n+i-1)\ot \\s\big(s(n+i)\cdots s(m+n-1)\big),
\end{multline}
where $ss=id$.
Apply $\pa_{n}^{(j)}$ to (\ref{basicelement}).
\begin{multline}\label{panj}
\pa_{n}^{(j)}(\ref{basicelement})=\sum_{\sigma}
\sigma_{1}\ot\cdots\ot\sigma_{p-j}\ot\pa_{n}^{(j)}\Big(\sigma_{p-j+1}\ot\cdots\ot\sigma_{p}\ot
s\big(s(p+1)\cdots s(p+n-j)\big)\Big)\\
\ot(p+n-j+1)\ot\cdots\ot(n+i-1)\ot s\big(s(n+i)\cdots s(m+n-1)\big),
\end{multline}
where $\sigma$ is a $(p-j,j)$-unshuffle permutation.
By (\ref{theX}),
\begin{multline*}
\pa_{n}^{(j)}\Big(\sigma_{p-j+1}\ot\cdots\ot\sigma_{p}\ot
s\big(s(p+1)\cdots s(p+n-j)\big)\Big)=\\
=(\pm)_{n}^{(j)}\{d\sigma_{p-j+1},...,d\sigma_{p},p+1,...,p+n-j\}.
\end{multline*}
Hence (\ref{panj}) is equal to
\begin{multline*}
\pa_{n}^{(j)}(\ref{basicelement})=(\pm)_{n}^{(j)}\sum_{\sigma}
\sigma_{1}\ot\cdots\ot\sigma_{p-j}\ot
\{d\sigma_{p-j+1},...,d\sigma_{p},p+1,...,p+n-j\}\\
\ot(p+n-j+1)\ot\cdots\ot(n+i-1)\ot s\big(s(n+i)\cdots s(m+n-1)\big),
\end{multline*}
Apply $\pa_{m}^{(i)}$ to above.
\begin{multline}\label{pamipanj}
\pa_{m}^{(i)}\pa_{n}^{(j)}(\ref{basicelement})=(\pm)_{m}^{i}(\pm)_{n}^{j}(\pm)_{1}\sum_{\sigma}
\big\{d\sigma_{1},...,d\sigma_{p-j},
d\{d\sigma_{p-j+1},...,d\sigma_{p},p+1,...,p+n-j\},\\
\overbrace{d(p+n-j+1),...,d(n+i-1)}^{i+j-p-1},n+i,...,m+n-1\big\},
\end{multline}
where
$(\pm)_{1}=(-1)^{(n+j)(i+j-p-1)+(n+j)(m-i+1)}$.
By using the anti-symmetry of $l_{m}$, remove $d\{d\sigma_{p-j+1},...,d\sigma_{p},p+1,...,p+n-j\}$
to the left-position of $n+i$,
\begin{multline}\label{pamipanj2}
(\ref{pamipanj})=(\pm)_{m}^{i}(\pm)_{n}^{j}(\pm)_{1}(\pm)_{2}\sum_{\sigma}
\big\{d\sigma_{1},...,d\sigma_{p-j},d(p+n-j+1),...,d(n+i-1),\\
d\{d\sigma_{p-j+1},...,d\sigma_{p},p+1,...,p+n-j\},n+i,...,m+n-1\big\},
\end{multline}
where $(\pm)_{2}=(-1)^{(n+j+1)(i+j-p-1)+(i+j-p-1)}$.
The sign of (\ref{pamipanj2}) is
\begin{equation}\label{theY}
(\pm)_{m}^{i}(\pm)_{n}^{j}(\pm)_{1}(\pm)_{2}
=(\pm)_{m}^{i}(\pm)_{n}^{j}(-1)^{(n+j)(m-i+1)}=(\pm)_{m}^{i}(\pm)_{n}^{j+1}(-1)^{(n+j)(m-i)},
\end{equation}
where $(\pm)_{n}^{j}(-1)^{n+j}=(\pm)_{n}^{j+1}$ is used.\\
\indent
We compute $\pa_{m}^{(i-1)}\pa_{n}^{(j+1)}(\ref{basicelement})$.
Up to $\pa_{m}^{(i-1)}(-)$,
\begin{multline}\label{42}
\pa_{n}^{(j+1)}(\ref{basicelement})
\equiv\sum_{\sigma}\sigma_{1}\ot\cdots\ot\sigma_{p-j}\ot(p+n-j+1)\ot\cdots\ot(n+i-1)\ot\\
\pa_{n}^{(j+1)}\Big(\overbrace{\sigma_{p-j+1}\ot\cdots\ot\sigma_{p}}^{j}\ot
s\big(\overbrace{s(p+1)\cdots s(p+n-j)}^{n-j}\big)\ot s\big(s(n+i)\cdots s(m+n-1)\big)\Big).
\end{multline}
The last component of (\ref{42}) is from Proposition \ref{gformofcoder},
\begin{multline*}
\pa_{n}^{(j+1)}\Big(\sigma_{p-j+1}\ot\cdots\ot\sigma_{p}\ot
s\big(s(p+1)\cdots s(p+n-j)\big)\ot s(s(n+i)\cdots s(m+n-1)\big)\Big)\\
=s\sum_{k}(\pm)_{n}^{(j+1)}(\pm)s\{d\sigma_{p-j+1},...,d\sigma_{p},d(p+k),
p+1,...,\hat{(p+k)},...,p+n-j\}s(n+i)\cdots s(m+n-1)\\
=s\sum_{k}(\pm)_{n}^{(j+1)}(\pm)s\{d\sigma_{p-j+1},...,d\sigma_{p},
p+1,...,d(p+k),...,p+n-j\}\cdots s(m+n-1)\\
=(\pm)_{n}^{(j+1)}s\Big(
sd\{d\sigma_{p-j+1},...,d\sigma_{p},p+1,...,p+n-j\}\cdots s(m+n-1)
\Big).
\end{multline*}
Hence
\begin{multline}\label{43}
\pa_{n}^{(j+1)}(\ref{basicelement})
\equiv(\pm)_{n}^{(j+1)}\sum_{\sigma}\sigma_{1}\ot\cdots\ot\sigma_{p-j}\ot(p+n-j+1)\ot\cdots\ot(n+i-1)\ot\\
s\big(
sd\{d\sigma_{p-j+1},...,d\sigma_{p},p+1,...,p+n-j\}\cdots s(m+n-1)
\big).
\end{multline}
Apply $\pa_{m}^{(i-1)}$ to (\ref{43}).
Then we get (\ref{pamipanj2}) up tp $-1$-times.
\begin{multline}\label{44}
\pa_{m}^{(i-1)}\pa_{n}^{(j+1)}(\ref{basicelement})=(\pm)^{(i-1)}_{m}(\pm)^{(j+1)}_{n}(\pm)_{3}
\sum_{\sigma}\big\{d\sigma_{1},...,d\sigma_{p-j},d(p+n-j+1),...,d(n+i-1),\\
d\{d\sigma_{p-j+1},...,d\sigma_{p},p+1,...,p+n-j\},...,m+n-1\big\}
\end{multline}
Here $(\pm)_{3}=(-1)^{(m-i)(n+j-1)}$ and the sign of (\ref{44}) is
\begin{multline}\label{theZ}
(\pm)^{i-1}_{m}(\pm)^{j+1}_{n}(\pm)_{3}=
(\pm)^{i-1}_{m}(\pm)^{j+1}_{n}(-1)^{(m-i)(n+j-1)}=\\
-(\pm)^{i-1}_{m}(\pm)^{j+1}_{n}(-1)^{m+i-1}(-1)^{(m-i)(n+j)}=
-(\pm)^{i}_{m}(\pm)^{j+1}_{n}(-1)^{(m-i)(n+j)},
\end{multline}
where $(\pm)^{i-1}_{m}(-1)^{m+i-1}=(\pm)_{m}^{i}$.
We get $(\ref{theZ})=-(\ref{theY})$.
\medskip\\
\noindent
\textbf{A2. Proof of Lemma \ref{thelastlemma}}.
Let us compute
\begin{equation}\label{lp1}
\pa_{n}^{(j)}
\Big(1\ot\cdots\ot k\ot s\big(s(k+1)\cdots s(m+n-1)\big)\Big),
\end{equation}
where $ss=id$ and $k=i+j$.
\begin{multline}\label{lp2}
(\ref{lp1})=
\sum_{\sigma}\sigma_{1}\ot\cdots\ot\sigma_{i}\ot
\pa_{n}^{j}\Big(\sigma_{i+1}\ot\cdots\ot\sigma_{k}\ot s\big(s(k+1)\cdots s(m+n-1)\big)\Big)=\\
=\sum_{\sigma,\eta}\e(\eta)(\pm)^{j}_{n}\sigma_{1}\ot\cdots\ot\sigma_{i}\ot
s\big(
s\{d\sigma_{i+1},...,d\sigma_{k},\eta_{k+1},...,\eta_{n+i}\}s\eta_{n+i+1}\cdots s\eta_{m+n-1}
\big).
\end{multline}
Apply $\pa^{(i)}_{m}$ to above.
\begin{multline}\label{lp3}
\pa^{(i)}_{m}(\ref{lp2})=\sum_{\sigma,\eta}\e(\eta)(\pm)^{i}_{m}(\pm)^{j}_{n}(-1)^{(n+j)(m-i-1)}\\
\big\{d\sigma_{1},...,d\sigma_{i},\{d\sigma_{i+1},...,d\sigma_{i+j},\eta_{i+j+1},...,\eta_{n+i}\},
\eta_{n+i+1},...,\eta_{m+n-1}\big\}.
\end{multline}
We should compute
\begin{equation}\label{lp4}
\pa_{n}\big(sd1\cdots sdk\cdot s(k+1)\cdots s(m+n-1)\big).
\end{equation}
In (\ref{lp4}), there exists the following term,
\begin{equation}\label{lp5}
\sum_{\sigma,\eta}\e(\eta)
sd\sigma_{1}\cdots sd\sigma_{i}\cdot\pa_{n}(sd\sigma_{i+1}\cdots sd\sigma_{i+j}\cdot
\overbrace{s\eta_{i+j+1}\cdots s\eta_{n+i}}^{n-j})\cdot
\overbrace{s\eta_{n+i+1}\cdots s\eta_{m+n-1}}^{m-i-1},
\end{equation}
where $\e(\sigma)=1$, because $sd$ is even.
\begin{multline}\label{lp6}
(\ref{lp5})=\sum_{\sigma,\eta}\e(\eta)(-1)^{\frac{(n-j)(n-j-1)}{2}}\\
sd\sigma_{1}\cdots sd\sigma_{i}\cdot sl_{n}(d\sigma_{i+1},...,d\sigma_{i+j},\eta_{i+j+1},...,\eta_{n+i})\cdot s\eta_{n+i+1}\cdots s\eta_{m+n-1}\\
=\sum_{\sigma,\eta}\e(\eta)(-1)^{\frac{(n-j)(n-j-1)}{2}+\frac{n(n-1)}{2}}\\
sd\sigma_{1}\cdots sd\sigma_{i}\cdot s\{d\sigma_{i+1},...,d\sigma_{i+j},\eta_{i+j+1},...,\eta_{n+i}\}\cdot s\eta_{n+i+1}\cdots s\eta_{m+n-1}.
\end{multline}
Apply $\pa_{m}$ to (\ref{lp6}).
\begin{multline}\label{lp7}
\pa_{m}(\ref{lp6})=\sum_{\sigma,\eta}\e(\eta)(-1)^{\frac{(n-j)(n-j-1)}{2}+\frac{n(n-1)}{2}+(1+n+j)(m-i-1)
+\frac{(m-i-1)(m-1-2)}{2}+\frac{m(m-1)}{2}}\\
\big\{d\sigma_{1},...,d\sigma_{i},\{d\sigma_{i+1},...,d\sigma_{i+j},\eta_{i+j+1},...,\eta_{n+i}\},\eta_{n+i+1},...,\eta_{m+n-1}\big\}.
\end{multline}
One can prove that the sign of (\ref{lp3}) is equal to the one of (\ref{lp7})
up to $(-1)^{k}$.

\end{document}